\documentclass[10pt,a4paper]{amsart}

\usepackage{amsthm,amssymb,amsfonts,amsopn,latexsym,amscd,mathrsfs}
\usepackage{graphicx} 
\usepackage[all]{xy}
\usepackage[dvips]{epsfig}
\usepackage{color}
%\usepackage[leftbars,dvips]{changebar}
%\addtolength{\changebarsep}{1pt}
%\usepackage{mathabx}

\numberwithin{figure}{section}
\xyoption{dvips}
\xyoption{rotate}

\def%
  \begin{picture}(0,0)%
    \includegraphics{#.pstex}%
  \end{picture}%
  \input{#.pstex_t}%
  1{%
  \begin{picture}(0,0)%
    \includegraphics{#1.pstex}%
  \end{picture}%
  \input{#1.pstex_t}%
  }
\numberwithin{equation}{section}

\newtheorem{thm}{Theorem}[section]
\newtheorem{prop}{Proposition}
\newtheorem{lm}[thm]{Lemma}
\newtheorem{crl}[thm]{Corollary}

\theoremstyle{remark}
\newtheorem{nt}[thm]{Notation}
\newtheorem{qu}[thm]{Question}

\theoremstyle{definition}
\newtheorem{ex}[thm]{Example}
\newtheorem{rem}[thm]{Remark}
\newtheorem{imprem}[thm]{Important Remark}
\newtheorem{df}[thm]{Definition}

%%%% evtl con \mathring{\lexl}

\newcommand{\FF}{\mathcal{F}}
\newcommand{\HF}{\widehat{\FF}}
\newcommand{\MM}{\mathcal{M}}

\newcommand{\coat}{\textup{coat}} 

\newcommand{\rkl}{\sqsubset}
\newcommand{\rkleq}{\sqsubseteq}%{\,\underline{\sqsubset\!}\,}

\newcommand{\rcal}{\prec}

\newcommand{\hP}{\widehat{P}}
\newcommand{\lexl}{\lhd}
\newcommand{\lexg}{\rhd}

\newcommand{\lexldot}{\mathrel{\mbox{$\lhd\!\!\!\cdot\,$}}}
\newcommand{\lexgdot}{\mathrel{\mbox{$\rhd\!\!\!\!\cdot\,\,$}}} %\,\,}}
\newcommand{\texg}{\vdash} % > in linext of tope
\newcommand{\texl}{\dashv} % < in linext of tope
\newcommand{\tl}{\prec} % < in tope poset
\newcommand{\tleq}{\preccurlyeq} %{\preceq} %=< in tope poset
 %{\succeq}
\newcommand{\tgdot}{\mathrel{\mbox{$\cdot\!\!\!\tg$}}}
%\newcommand{\tldot}{\cdot\tl}
 % < and different in tope poset
\newcommand{\tg}{\succ} % > in tope poset
\newcommand{\Tl}{\tl} % < in chain omega_*
\newcommand{\F}{\mathcal{F}}
\newcommand{\supp}{\textup{supp}}
\renewcommand{\SS}{\mathcal{S}}
\newcommand{\tldot}{\mathrel{\mbox{$\tl\!\!\!\cdot$}}}

\newcommand{\LL}{\mathcal{L}}

\newcommand{\tlexl}{\texl^\ell}

\newcommand{\M}{\mathfrak{M}}
\newcommand{\chain}{\omega}
\newcommand{\tp}{\mathcal{T}}
\renewcommand{\AA}{\mathcal{A}}

\newcommand{\CC}{\mathcal{C}}
\newcommand{\nbc}{\textup{{\bf nbc}}}

\title[Shelling-type orderings and Salvetti complex]{\mbox{Shelling-type orderings of regular CW-complexes} and acyclic matchings of the Salvetti complex}
\author{Emanuele Delucchi}
\email{delucchi@mail.dm.unipi.it}
\keywords{Posets; shellability; Recursive Coatom Orderings; acyclic matchings; Discrete Morse Theory; Oriented Matroids; arrangements of hyperplanes; Salvetti complex; minimal CW-complexes; No Broken Circuit sets}
%\amsc{456}
\address{Dipartimento di Matematica, Universit\`a di Pisa, largo B. Pontecorvo 5, 56127 Pisa, Italia.}

\renewcommand{\1}{\hat{1}}
\newcommand{\0}{\hat{0}}

\begin{document}
%\thanks{{\em Acknowledgements.} The work on which we report was carried out during a stay at the university of Pisa financially supported by a postdoctoral fellowship of the Swiss National Science Foundation. The original motivation was to look for something like Proposition \ref{maxmat}, and arose while attending a seminar talk by professor Mario Salvetti who reported on his joint work with Simona Settepanella \cite{SS}. Dmitry Kozlov encouraged me towards... . A special thank goes to Mr \href{http://en.wikipedia.org/wiki/Giotto_di_Bondone}{Di Bondone}: a considerable amount of the paper was written while sitting in front of his ``campanile''.}

\maketitle

\begin{abstract}
Motivated by the work of Salvetti and Settepanella (\cite[Remark 4.5]{SS}) we introduce certain total orderings of the faces of any shellable regular
CW-complex (called shelling-type orderings) that can be used to explicitly construct maximum acyclic
matchings of the poset of cells of the given complex. Building on an
application of this method to the classical zonotope shellings (i.e., those arising from linear extensions of the tope poset) we
describe a class of maximum acyclic matchings for the Salvetti complex
of a linear complexified arrangement. To do this, we introduce and study a new purely combinatorial stratification of the Salvetti complex.
For the obtained acyclic matchings we give an explicit description of
the critical cells that depends only on the chosen linear extension of
the poset of regions. It is always possible to choose the linear
extension so that the critical cells can be explicitly constructed
from the chambers of the arrangement via the bijection to
no-broken-circuit sets defined by Jewell and Orlik \cite{JO}. Our
method generalizes naturally to abstract oriented matroids.
\end{abstract}

%$\lexldot$
%$a\lexgdot b \tgdot c\tldot d$

\section{Introduction}
The idea of {\em shelling} was initially introduced by Bruggesser and
Mani \cite{mm} as a (geometrically defined) technique to deconstruct
polytopes in a `controlled way' allowing an accurate bookkeeping of
certain combinatorial data. The required total ordering of the
polytope's facets was obtained from the order in which a general position line in meets the affine hulls of the facets. Much work has been spent on a purely combinatorial characterization of this process, and on a corresponding generalization of the method beyond polytopes. In fact, shellability can be defined for general (possibly nonpure) regular cell complexes \cite{BjW1,BjW2}. A line of research initiated by Bj\"orner \cite{Bj} studies combinatorial properties of posets that ensure shellability of the associated order complexes. A considerable amount of work was dedicated to this subject (see e.g. \cite{Bj,BCW,BjW1,BjW2}). %that found different applications (as in \cite{coxetersetc}). 
%Kozlov gave a necessary and sufficient condition ({\em lexshellability}) on a poset for its order complex to be shellable as a simplicial complex \cite{Koz}. 
Particular attention was dedicated to the posets of cells of regular CW-complexes: Bj\"orner characterized them combinatorially (see \cite[Definition 2.1 and Proposition 3.1]{BCW}), and proved that shelling orders of the facets of the associated CW-complex correspond to recursive coatom orderings of the posets (\cite[Proposition 4.2]{BCW}, see also \cite[Theorem 13.2]{BjW2}).

Recently, Forman \cite{For} proposed a combinatorial version of Morse
theory, called {\em Discrete Morse Theory}. The idea is that, given
any regular CW-complex, one can define a combinatorial analog of the
Morse vector fields (i.e., acyclic matchings on the poset of cells;
see Definition \ref{acyclicmatching} and \cite[Proposition 3.3]{Chari}) such that the original
complex is homotopy equivalent to a complex having as many cells of
dimension $d$ as there are `critical points' (i.e., non-matched cells)
of rank $d+1$. Moreover, the attaching maps can be reconstructed from
the knowledge of the `gradient paths' (i.e., alternating paths in the
poset). Since at the topological core of both shellability and
discrete Morse theory lies the idea of collapsing cells (along matched
edges or along the shelling order), it is natural to study the
relation between these concepts: this study was undertaken by various
authors, e.g. in \cite{BaHe,Chari,Koz}.  A comprehensive and careful exposition of the nowadays established combinatorial framework of discrete Morse theory can be found in  the book of Kozlov \cite{Ko}. \\

The motivation for our considerations was given by a joint work of
Mario Salvetti with Simona Settepanella \cite{SS}, where discrete
Morse theory is used to explicitly obtain a minimal CW-complex that
models the homotopy type of the complement of a complexified
arrangement of hyperplanes, thus providing a constructive proof of the
minimality result for general arrangements that was obtained independently by Randell \cite{Rand} and Dimca and Papadima \cite{DiPa}. Another recent study of minimal complexes for complexified arrangements is due to Yoshinaga \cite{Yo}. For the basic definitions about arrangements of hyperplanes we refer to \cite{OT}. 

The starting point of \cite{SS} is the Salvetti complex (introduced in \cite{Sal1} as a combinatorial model for the topology of the complement of complexified arrangements), and the main tool used to construct a
maximum acyclic matching of its poset of cells is a certain total order on the faces of the
arrangement that is called {\em polar ordering} by the authors. The name
refers to the fact that this total order is obtained by considering polar coordinates with respect to a generic flag and
then ordering the faces according to their smallest point in the
lexicographical order of the polar coordinates (for the precise
definition see \cite[Definition 4.4]{SS}).  It is explicitly asked for a completely combinatorial
formulation of this method \cite[Remark 4.5]{SS}.

In an attempt to answer this question, we keep the idea of
constructing acyclic matchings by considering the arrangement from a
`generic' point of view, but we try to stay in the context of oriented
matroids. These are widely studied combinatorial objects that encode
the structure of real arrangements of pseudospheres, and in particular
of linear hyperplanes (for an introductory reference see \cite[Chapter
1]{BLSWZ}). Thus, we actually loose the generality of \cite{SS}, where
the results hold also for affine arrangements. However, our method has
the advantage that it does not need the choice of a generic flag in
the ambient space, and that it holds for general abstract oriented matroids.

One of the ways one can think to look `generically' at an oriented matroid is to consider a shelling of its {\em zonotope}. This is a polytope that is classically associated to every oriented matroid and that, if the oriented matroid corresponds to a real arrangement, is combinatorially isomorphic to the polyhedral subdivision of the unit sphere given by the hyperplanes (for a precise account of this subject, see \cite[Section 2.2 and Chapter 4]{BLSWZ}). It is well-known that to every linear extension of the {\em tope poset} of the oriented matroid corresponds a (class of) shelling(s) of the associated zonotope: in fact, one can construct recursive coatom orderings of the zonotope's poset of faces.

We first show a way to construct maximum acyclic matchings of
\mbox{(CW-)} posets that admit a recursive coatom ordering. We do this
using {\em shelling-type orderings}: a class of total orderings of the
involved poset that are associated to recursive coatom orderings.
Then we apply this construction to the special case of a
zonotope. 

It turns out that linear extensions of tope posets describe also a nice decomposition of the Salvetti complex that, to the best of our knowledge, has not been considered up to now.
The above obtained zonotope shellings give acyclic matchings of every `piece' of this decomposition that can be
`pasted together' to give an acyclic matching of the poset of cells of
the whole Salvetti complex. To every critical cell correspond canonically a
(unique) chamber and a flat of the underlying matroid which
codimension equals the dimension of the critical cell. Both are
uniquely determined by the chosen linear extension of the tope
poset. Maximality of the matching follows from the fact that the
critical cells are in bijection with no-broken circuits, and thus with
generators of the homology (by e.g. \cite{JO,Yo}). 

This correspondence
can be made more precise and explicit: we show that, for an adequate
choice of the ordering of the hyperplanes and of the linear extension
of the base poset, the bijection between chambers and
no-broken-circuits given by Jewell and Orlik in \cite{JO} associates
to every chamber a basis of the flat that carries the corresponding
critical cell.\\

The paper is organized as follows. After introducing the main characters, in Section \ref{comb}  we prove
that every recursive coatom ordering of a CW-poset induces a shelling-type total ordering of its faces (Lemma \ref{p_i}). From this total ordering, in Proposition \ref{cw_ac} we construct an acyclic matching of the given poset that turns out to be `optimal' 
(for a comparison with known
related results of Chari \cite{Chari} and Babson and Hersh \cite{BaHe} see Remark \ref{comm}). Then, Section \ref{oms}
introduces oriented matroids, explains the construction of the
zonotope shelling associated to a linear extension of the tope
poset and compares (in Remark \ref{confronto}) the associated shelling-type ordering with the
polar orderings of \cite{SS}: this is our (kind of)
answer to \cite[Remark 4.5]{SS}. In Section \ref{sal_mat} we study the stratification
of the Salvetti complex induced by a linear extension of the tope
poset (in the context of arrangements also called `poset of regions' and first considered in \cite{ed1}). First, we prove a general property of tope posets (Theorem \ref{propJc}) that, given a linear extension, allows to associate a unique flat $X_C$ to every tope $C$.  It turns out that the stratum associated to a tope $C$
corresponds naturally to the oriented matroid obtained by contraction
of the flat $X_C$. On the one hand, this allows to construct acyclic matchings for every stratum and
to verify acyclicity and maximality of the `patchworked' matching
(Proposition \ref{maxmat}). On
the other hand, in Section \ref{sect_nbc} we show that for some orderings
of the hyperplanes (Definition \ref{cut}) there is a linear
extension of the tope poset (Definition \ref{lex}) for which the flat
$X_C$ is spanned by the no-broken circuit set that corresponds to $C$
under the bijection described in \cite{JO} (Proposition \ref{result}).

\subsection*{Acknowledgments}
The work on which we report was carried out during a stay at the
university of Pisa financially supported by a postdoctoral fellowship
of the Swiss National Science Foundation. Thus, I have to thank
professor Mario Salvetti not only for the inspiring seminars and the useful discussions, but also for the friendly hospitality.

\section{Shellings and acyclic matchings}\label{comb}

\subsection{On partially ordered sets.}

In this work we will deal extensively with partially ordered
structures. We outline the basic definitions, pointing to \cite[Chapter 3]{Sta} for a comprehensive reference. A {\em poset} is a set (say $P$) endowed with a partial order
relation (say $<$), and will be written as a pair $(P,<)$ or, if no
misunderstanding about the partial order will be possible, just denoted by
$P$. Moreover, the posets we will consider will be {\em locally
  finite}, meaning that for each $p\in P$ there are only finitely many
$q$ with $p<q$ or $p>q$. An element $p\in P$ is said to {\em cover}
$q\in P$ whenever $p>q$ and there is no $x\in P$ with $p>x>q$. If $p$
covers $q$ with respect to the order relation $>$ (or $\lexg$, $\tg$,...), then we will write $p\gtrdot q$ (respectively $\lexgdot$, $\tgdot$). The set of all elements of $P$ that are covered by $p$ will be called, by
slight abuse of notations, the set of {\em coatoms} of $p$, and
denoted by $\coat(p)$. In fact, for every $q\in P$, the set $\coat(q)$
is the set of coatoms of the poset $P_{\leq q}:=\{p\in P \mid p\leq q
\}$. This poset is called the {\em principal lower ideal} generated by $q$, a {\em lower ideal} being in general any subposet of $P$ that can be written as an intersection of principal lower ideals; 
 (principal) {\em upper} ideals are defined accordingly. Any subset of the form $P_{\leq q} \cap P_{\geq p}$ is called an {\em interval} of $P$. We
will write $P_{< q}:=P_{\leq q}\setminus \{q\}$. 
%Any subset of
%$P$ that can be written as a union of lower intervals is called a {\em
%  lower ideal} 
%An ideal is called {\em principal} if it is in fact an interval. 
A totally ordered subset $\omega\subset P$ will be
called {\em chain}, and its {\em length} is defined by
$\ell(\omega):=\vert\omega\vert -1$. The length of $P$, $\ell(P)$, is then defined as the maximum length of a chain contained in $P$. If every maximal chain of $P$ has the same length, then $P$ is called {\em graded} and possesses a unique {\em rank function} $r:P\rightarrow \mathbb{N}$, $r(p):=\ell(P_{\leq p})$.

A poset $P$ is said to be a {\em lattice} if every two $p,q\in P$ have a unique least upper bound (called {\em join} and denoted $p\vee q$) and a unique greatest lower bound (called {\em meet} and denoted by $p\wedge q$). 
\begin{rem}\label{closedinterval} An upper (lower) ideal in a lattice is principal if and only if it is closed under meet (join).
\end{rem}

Sometimes we will have to consider different order relations on
the same set. If needed, the concerned order relation will be specified in a
subscript. Thus, for example, $\max_{\succ} A$ denotes the maximal element of $A$
with respect to the order $\succ$. A {\em linear extension} of a partial order $<$ is a total order
$\lexl$ such that $p\lexl q$ whenever $p<q$. 

A poset $P$ is called {\em bounded} if it possesses a maximal and a
minimal element. %For any poset $P$ 
Let $\hP$ denote the poset $P$ with
a maximal and a minimal element added, if $P$ has none. The maximal
and minimal elements of $P$ are customarily denoted by $\1$ and, respectively, $\0$. In a poset with $\hat{0}$ a principal lower ideal is also called a {\em lower interval}.

Given a (possibly nonpure) CW-complex $K$, we define its {\em poset of faces} $\FF(K)$ as the set of (closed) cells of $K$ ordered by containment, with a minimal element $\0$ added (the `$-1$ - dimensional cell'). Note that, for every cell $k$, every maximal chain in $\FF(K)_{\leq k}$ has the same length. The {\em height} of $k$ is $h(k)=\ell(\FF(K)_{\leq k})$, the length of the corresponding lower interval. Geometrically, we have $\dim(k)=h(k)+1$ for every cell $k$.

\begin{figure}[h]
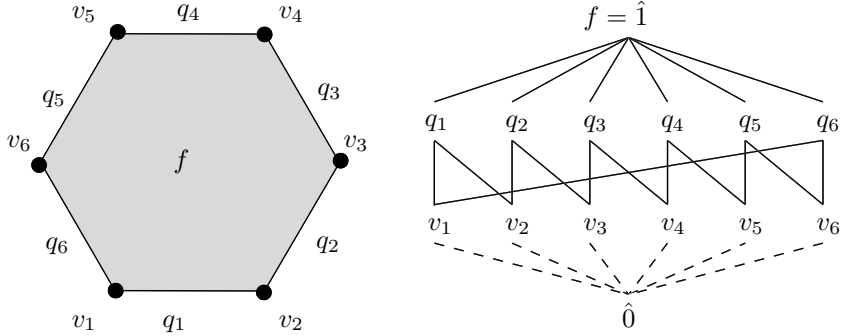

  \begin{picture}(0,0)%
    \includegraphics{hexagon_scaled.pstex}%
  \end{picture}%
  \input{hexagon_scaled.pstex_t}%
  
\caption{The regular CW-complex $K_1$ given by a filled hexagon, and its poset of faces $\HF(K_1)$.}\label{hexg}
\end{figure}

\subsection{Shellability and Recursive Coatom Orderings.}

A regular CW complex $K$ is said to be {\em shellable} if its maximal cells can be given an order along which the complex can be `rebuilt' in a very controlled way. For the precise definition we refer to \cite[Definition 13.1]{BjW2}, where shellability was first extended from simplicial complexes to regular CW-complexes. The complexes that we will consider are given by means of their poset of cells. Therefore we take a point of view that is more tailored to our context: we will define recursive coatom orderings of posets, and then see how they correspond to shellings of regular CW-complexes.

\begin{df}[Definition 5.10 of \cite{BjW1}]\label{def_rca} A bounded poset $(P,<)$ is said to admit a {\em recursive coatom ordering $\rcal$} if $\ell(P)=1$, or if $\ell(P)>1$ and there is a total ordering $\rcal = \rcal_{\1}$ on the set $\coat(\hat{1})$ of coatoms of $P$ such that \begin{itemize}
\item[(1)] for all $p\in\coat (\1)$, the interval $[\0,p]$ admits a recursive coatom ordering $\rcal_p$ in which the coatoms of the intervals $[\0,q]$ for $q\rcal_{\1} p$ come first.
\item[(2)] for all $p\rcal_{\1} q$, if $p,q> y$, then there is $p'\rcal_{\1} q$ and $z\in\coat(q)$ such that $p'>z\geq y$.
\end{itemize}  
\end{df}

This definition is one of the criteria introduced by Bj\"orner to check shellability of the order complex of a poset. It turned out that, in the context of regular CW-complexes, this property is equivalent to shellability. We state these facts in the next theorem.

\begin{thm}[See \cite{Bj},\cite{BjW2}]\label{cwshell}
If a poset $P$ admits a recursive coatom ordering, then $\Delta(P)$ is shellable. If $P$ is the poset of faces of a regular CW-complex $K$, then a total ordering of the maximal faces of $K$ is a shelling order if and only if it is a recursive coatom ordering of $\hP$.
\end{thm}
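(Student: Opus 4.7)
The plan is to handle the two statements in turn: the first by induction on $\ell(P)$ using the recursive structure built into Definition \ref{def_rca}, and the second by reducing to the first via Björner's characterization of CW-posets mentioned in the introduction.

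For the first statement, I would argue by induction on $\ell(P)$. The case $\ell(P)=1$ is immediate. For the inductive step, suppose $\rcal_{\1}$ orders the coatoms as $p_1 \rcal_{\1} \cdots \rcal_{\1} p_m$. By the inductive hypothesis, each $\Delta([\0,p_i])$ is shellable via the shelling induced by $\rcal_{p_i}$. I would then concatenate these shellings in the order given by $\rcal_{\1}$: the facets of $\Delta(P)$ are listed by going through each $\Delta([\0,p_i])$ in turn, listing first the simplices containing $p_i$ whose restriction to $[\0,p_i]$ is a new facet there. The verification reduces to checking the shelling property at the transitions: when $F = \{\0 < \cdots < q < p_i < \1\}$ is a new facet, its intersection with the already-shelled subcomplex must be pure of codimension one. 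The faces internal to $[\0, p_i]$ are controlled by the inductive shelling of $\Delta([\0,p_i])$ (and the initial-segment clause in condition (1) guarantees the inductive shellings agree where needed); the faces passing through some $p_j$ with $j<i$ and some $y<p_i$ are exactly where condition (2) is used: it provides, for each such $y$, a coatom $z$ of some previous $q\rcal_{\1} p_i$ with $y\leq z$, which certifies that the codimension-one face of $F$ obtained by removing $p_i$ is already covered.

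For the second statement, I would invoke Björner's theorem that the poset of faces of a regular CW-complex is characterized combinatorially by the fact that every lower interval $[\0,p]$ in $\HF(K)$ is isomorphic to the face poset of a regular CW-decomposition of a sphere (\cite[Proposition 3.1]{BCW}). Under this correspondence, $\Delta(\HF(K)_{\leq p})$ is a barycentric subdivision of the closed cell $\bar p$, so a shelling of $K$ in the sense of \cite[Definition 13.1]{BjW2} translates, via barycentric subdivision, into a shelling of the order complexes of the lower intervals. The shelling condition on $K$ — that the boundary of each new top cell meets the union of the previous ones in a pure subcomplex of the right dimension that is itself shellable — then matches exactly the two clauses of Definition \ref{def_rca}: condition (1) records the recursion into cell boundaries, and condition (2) records the codimension-one intersection property. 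The equivalence follows by another induction on dimension, using the first statement to convert recursive coatom orderings into shellings and the converse by reading off the induced orderings on coatoms from a shelling of each closed cell.

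I expect the main technical obstacle to be the bookkeeping in the first statement: one has to check carefully that when a facet $F$ is added, \emph{every} codimension-one face of $F$ that has already appeared does so as part of a codimension-one intersection — in other words, that the intersection of $F$ with the previously added subcomplex is exactly a union of codimension-one faces of $F$ with no lower-dimensional stray simplices. This is a purely combinatorial lemma that follows from condition (2) after careful enumeration of the faces, but it is the place where Definition \ref{def_rca} is more than a straightforward recursion. The second statement, by contrast, is mostly a matter of translating vocabulary once the CW-poset identification is in place.
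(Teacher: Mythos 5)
The paper does not prove this theorem: it is quoted from the literature with the citations \cite{Bj} and \cite{BjW2}, so there is no ``paper's own proof'' to compare against. Your reconstruction is, for the first statement, essentially the standard argument (concatenating the inductive shellings of the lower intervals $(\0,p_i)$ in the coatom order, using the initial-segment clause of Definition~\ref{def_rca}(1) to make the shellings compatible and clause~(2) to certify purity of the transition intersections), and you correctly identify where the real work sits, namely in verifying that the intersection of a newly added facet with the previously shelled part is a \emph{pure} codimension-one subcomplex and has no stray lower-dimensional pieces. That is indeed the heart of the combinatorial lemma, and it is also where one must be slightly more careful than your sketch suggests: given a face $\tau\subseteq F$ already covered and with top element $y<p_j$ for some $j<i$, condition~(2) produces \emph{some} coatom $z\in\coat(p_i)$ with $y\leq z<p'$, but $z$ need not be the top element $y_k$ of $F\setminus\{p_i\}$; one then has to route through the initial-segment clause of~(1) (which forces facets through such $z$ to come earlier in the induced shelling of $\Delta((\0,p_i))$) to find the required codimension-one face of $F$ containing $\tau$. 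This is the precise point your ``careful enumeration'' would have to address.

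For the second statement, your detour through barycentric subdivision is an unnecessary layer: the proof in \cite[Theorem 13.2]{BjW2} (and \cite[Proposition 4.2]{BCW}) is a direct dictionary between cells of $K$ and elements of $P$, matching clause~(1) of Definition~\ref{def_rca} with the recursion into cell boundaries and clause~(2) with the requirement that $\partial\sigma_j\cap\bigl(\bigcup_{i<j}\bar\sigma_i\bigr)$ be a pure subcomplex of codimension one in $\partial\sigma_j$. You do ultimately state this matching, so the content is there, but invoking the barycentric subdivision risks conflating the statement being proved (shelling of $K$ iff RCO of $\hP$) with the related but distinct statement that $\Delta(\bar P)$ is shellable. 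The CW-poset characterization from \cite[Proposition 3.1]{BCW} is indeed needed, as you say, but only to know that each $[\0,p]$ is the face poset of a regular CW sphere, so that the recursion closes; the barycentric subdivision itself plays no role in the induction. In short: correct skeleton, faithful to the cited sources in spirit, with the acknowledged technical load in part one and a slightly circuitous framing in part two.
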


\begin{figure}[h]
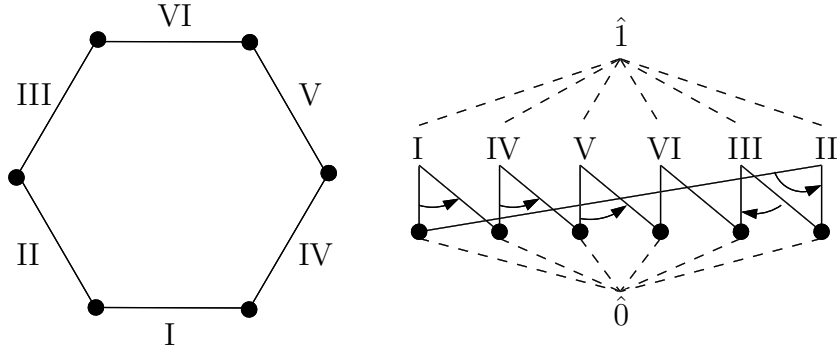

  \begin{picture}(0,0)%
    \includegraphics{hexshell_scaled.pstex}%
  \end{picture}%
  \input{hexshell_scaled.pstex_t}%
  
\caption{A shelling order of the maximal faces of the boundary complex $K_2$ of a hexagon, and a corresponding Recursive Coatom Ordering of the poset $\HF(K_2)$. The arrows give the R.C.O. of the corresponding lower intervals. Below the VI face, the ordering does not matter.}\label{shellingorder}
\end{figure}

%\begin{lm}
%Let $\MM$ be a simple oriented matroid and $T$ be a tope of $\MM$.
%Every linear extension of the tope poset $\PP_T(\MM)$ defines a recursive coatom ordering of the poset $\FF_\MM^{op}$ of the covectors of $\MM$ ordered by support inclusion.
%\end{lm}
%\begin{proof}[\bf Proof.] See \cite[Proposition 4.3.2]{BLSWZ}
%\end{proof}

\subsection{Matchings and Discrete Morse Theory.}
 
We introduce here some basic concepts of Discrete Morse Theory,
 omitting their proofs. The interested reader will find reference to the
 publications where the statements first appeared. For a
 comprehensive exposition of the subject in its entirety we refer to
 the book of Kozlov \cite{Ko}.

\begin{df}[Compare Proposition 3.3 of \cite{Chari}]\label{acyclicmatching}
Let $(P,<)$ be any poset. The set of {\em edges} of $P$ is $\mathfrak{E}:=\{(p, q)\in P\times P\mid p\gtrdot q\}$. A subset $\M\subset \mathfrak{E}$ is called a {\em matching} of $P$ if every element of $P$ appears in at most one {\em matched pair}, i.e., a pair $(p,q)\in\M$. A {\em cycle} in a matching $\M$ is a subset $\{(p_1,q_1),\ldots , (p_k,q_k)\}\subseteq \M$ such that 
$$q_1\lessdot p_2,\; q_2\lessdot p_3,\; \ldots,\; q_k\lessdot p_1. $$
%$$p_1\gtrdot q_1 \lessdot p_2 \gtrdot q_2 \lessdot \cdots q_{k-1}\lessdot p_k \gtrdot q_k.$$ 
The matching $\M$ is called {\em acyclic} if it contains no cycle. 

Much of the terminology is borrowed from the theory of graphs, the idea being that $\M$ is actually a matching of the {\em Hasse diagram} of $P$, i.e., the graph defined by the set of edges $\mathfrak{E}$ on the vertex set $P$ (informally speaking, this is the graph one usually draws when graphically representing a poset). A matching $\M$ will be called {\em maximal} if there is no matching $\M'\supsetneq \M$. If, in addition, $\M$ has maximal cardinality among all matchings of $P$, then it is called a {\em maximum matching}. A {\em perfect matching} is a matching such that every element of $P$ is contained in a matched pair. In general, $p\in P$ is called {\em critical} for $\M$ if it is not contained in any matched pair.
\end{df}

The following result is very useful in dealing with acyclic matchings.

\begin{lm}[Theorem 11.2 of \cite{Ko}]\label{ac_linext} A matching $M$ of a poset $P$ is acyclic if and only if there is a linear extension $\lexl$ of $P$ such that $p\lexldot q$ whenever $(p,q)\in M$. 
\end{lm}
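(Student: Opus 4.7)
The plan is to encode the matching $M$ as a directed graph on $P$ whose acyclicity is equivalent to the acyclicity of $M$, and then extract the desired linear extension from a topological sort of that graph. Define $D_M$ on vertex set $P$ by including the edge $p\to q$ for every unmatched cover $p\gtrdot q$ and the reversed edge $q\to p$ for every matched pair $(p,q)\in M$; in other words, Hasse-diagram edges are oriented downward, except that matched edges are flipped to point upward.

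The first key step is to show that $M$ has a cycle in the sense of Definition \ref{acyclicmatching} if and only if $D_M$ has a directed cycle. Given a matching cycle $\{(p_i,q_i)\}_{i=1}^k$ with $q_i\lessdot p_{i+1}$, alternating the matched edges $q_i\to p_i$ with the unmatched covers $p_{i+1}\to q_i$ immediately produces a directed cycle of $D_M$. Conversely, any directed cycle in $D_M$ must contain at least one upward (matched) edge, since a chain of strictly descending covers cannot close up; and because every vertex lies in at most one matched pair, the upper endpoint of such a pair has no outgoing matched edge, so each matched edge in the cycle must be followed by one or more downward unmatched covers. Taking a directed cycle of minimal length and using that each edge of $D_M$ is a cover, one checks that these downward runs must have length one, which produces a matching cycle in the sense of the definition.

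Assuming now $M$ is acyclic, so $D_M$ has no directed cycle, a topological sort of $D_M$ gives a total order $\lexl$ on $P$ with $x\lexl y$ whenever $x\to y$ is an edge of $D_M$. This is the required linear extension, and each matched pair appears in the correct relative order. To ensure that the two members of each matched pair are additionally consecutive in $\lexl$, I would contract the matched edges of $D_M$: a careful check (using the fact that covers have no intermediate elements, which rules out the pathological configurations that could arise from the identification) shows that the contracted graph is still acyclic, and topologically sorting it and then re-expanding each super-vertex yields the sought linear extension with $p\lexldot q$ for every $(p,q)\in M$. Conversely, if such a $\lexl$ exists and $\{(p_i,q_i)\}$ were a matching cycle, the consecutiveness $p_i\lexldot q_i$ together with the inequalities coming from the covers $q_i\lessdot p_{i+1}$ would chase around the cycle to give a strict descent in $\lexl$, which is absurd.

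The main obstacle is the consecutiveness step in the forward direction: a generic topological sort of $D_M$ may interleave unrelated elements between the two members of a matched pair, and a naive swap risks breaking some other topological constraint. The contraction trick sketched above is the cleanest way to avoid this, but it requires a careful case analysis using the cover-relation structure of $P$ to ensure that contracting the matched edges does not introduce spurious cycles in the quotient.
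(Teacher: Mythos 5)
The paper does not supply a proof of Lemma~\ref{ac_linext}: it is quoted as Theorem~11.2 of Kozlov's book, so there is no in-text argument to compare against. Your overall route --- orient the Hasse edges, flip the matched ones to obtain a digraph $D_M$, show that matching-cycles correspond to directed cycles of $D_M$, and then extract the total order by topological sort after contracting the matched edges --- is the standard one, and the final re-expansion does yield the consecutiveness asserted in the lemma.

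The one genuine gap is exactly the step you flag at the end. You claim that contracting the matched edges of the DAG $D_M$ keeps it acyclic, and attribute this to ``covers having no intermediate elements,'' but that observation alone is not enough: contracting a disjoint set of edges in a DAG can create directed cycles (contract $a\to b$ in the DAG with edges $a\to b$, $a\to c$, $c\to b$), so the structure of $D_M$ has to enter. The clean way to finish, valid in the locally ranked setting in which the lemma is applied, is a rank count. In $D_M$ every unmatched edge drops the rank by exactly $1$ and every matched edge raises it by exactly $1$. Suppose the contracted graph had a directed cycle $V_1\to V_2\to\cdots\to V_m\to V_1$. Since matched edges are internal to their super-vertices, each edge $V_i\to V_{i+1}$ of the cycle lifts to an unmatched cover in $D_M$ and contributes $-1$ to the rank; the only way to regain rank is through the internal jump inside some $V_i$ from entry vertex to exit vertex, which contributes at most $+1$ (and that only when $V_i$ is a contracted matched pair traversed from its bottom to its top). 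Since the total rank change around a cycle is $0$, all $m$ jumps must equal $+1$: every $V_i$ is a pair $\{p_i,q_i\}\in M$ traversed from $q_i$ to $p_i$, and the connecting lifted edges read $p_i\gtrdot q_{i+1}$. That is precisely a cycle of $M$ in the sense of Definition~\ref{acyclicmatching}, contradicting acyclicity. (The same rank count also replaces your ``minimal cycle'' argument for the equivalence of $D_M$-cycles and matching-cycles.) One last caveat on signs: with your orientation the topological sort lists larger cells before smaller ones, so the order you build is a linear extension of $P^{\mathrm{op}}$ rather than of $P$ in the sense of the paper's own definition --- this matches the shelling-type orderings actually used in Section~\ref{comb}, but it is worth stating the convention explicitly, since as written the phrase ``linear extension of $P$'' in the lemma is in literal tension with the requirement $p\lexldot q$.
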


From a topological point of view, the interest of acyclic matchings of posets is explained in the following (weak) version of the main theorem of Discrete Morse Theory.
\begin{thm}[Theorem 11.13 of \cite{Ko}. See also \cite{Chari,For}]\label{cwm} Let $K$ be a regular CW-complex $K$ and $\M$ an acyclic matching of $\FF(K)\setminus \{\0\}$. Let $c_i$ denote the number of critical elements of rank $i$.  Then $K$ is homotopy equivalent to a CW-complex that has $c_i$ cells in dimension $i$. 
\end{thm}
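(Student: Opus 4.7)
The plan is to realise $\M$ as a sequence of homotopy-trivial elementary collapses inside a cellular filtration of $K$. First I would invoke Lemma \ref{ac_linext} to fix a linear extension $\lexl$ of $\FF(K)\setminus\{\hat{0}\}$ in which every matched pair $(p,q)\in\M$ consists of two consecutive elements, with the lower one $q$ appearing just before its partner $p$. This $\lexl$ induces a filtration
\[
\emptyset = K_0 \subset K_1 \subset \cdots \subset K_N = K,
\]
where $K_t$ is obtained from $K_{t-1}$ by attaching the closure of the $t$-th cell listed by $\lexl$; note that the attaching map lands in $K_{t-1}$ automatically, because $\lexl$ extends the face order.

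Next I would analyse each step of the filtration. If the $t$-th cell $c$ is critical, the inclusion $K_{t-1}\hookrightarrow K_t$ is a standard cellular attachment contributing one new cell in dimension $\dim c$. If instead the $t$-th and $(t{+}1)$-st cells form a matched pair $(p,q)\in\M$, I would treat the two-step passage $K_{t-1}\hookrightarrow K_{t+1}$ at once and check that $q$ is a free face of $p$ in $K_{t+1}$. Indeed any other coface $p'\neq p$ of $q$ satisfies $q\lexl p'$ by the linear-extension property; since $q$ and $p$ are consecutive in $\lexl$, such a $p'$ must appear strictly after $p$ and is therefore not yet present in $K_{t+1}$. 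An elementary collapse of $p$ across its free face $q$ then provides a deformation retraction of $K_{t+1}$ onto $K_{t-1}$, giving $K_{t-1}\simeq K_{t+1}$.

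Concatenating these homotopy equivalences and cell-attachments along the filtration, I would conclude that $K=K_N$ is homotopy equivalent to a CW-complex built by attaching, in $\lexl$-order, exactly one cell for each critical element of $\M$, of dimension equal to its rank in $\FF(K)\setminus\{\hat{0}\}$. This produces the claimed model with $c_i$ cells in dimension $i$.

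The main obstacle is the free-face verification at each matched-pair step, and this is where the acyclicity hypothesis enters decisively — through Lemma \ref{ac_linext} and the consecutiveness of matched pairs in $\lexl$. The remaining geometric input, namely that a regular CW-complex deformation-retracts across a free face onto the complex obtained by removing that face together with its unique coface, is classical for regular CW-complexes and can be found in \cite{Ko}.
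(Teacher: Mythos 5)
The paper does not prove this statement; it is imported verbatim from Kozlov's book (Theorem~11.13), so there is no internal argument to compare against. Your sketch is a correct rendition of the standard discrete-Morse proof that underlies Kozlov's theorem: a linear extension with consecutive matched pairs yields an increasing cellular filtration in which each critical cell contributes a genuine cell attachment while each matched pair is attached and immediately removed by an elementary collapse across the free face, and composing the resulting homotopy equivalences produces a CW model with one cell of dimension $i$ per critical element of rank $i$.

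One caveat worth flagging so you don't second-guess yourself against the source: the orientation you chose for $\lexl$ --- the smaller cell $q$ of a matched pair $(p,q)$, $p\gtrdot q$, appearing \emph{immediately before} its partner $p$ --- is the one the argument needs, since it is what makes $\lexl$ a genuine linear extension of the face order (so the filtration is cellular) and what makes $q$ free in $K_{t+1}$. The paper's own phrasing of Lemma~\ref{ac_linext}, namely ``$p\lexldot q$ whenever $(p,q)\in M$,'' read with the paper's cover-relation notation, would put the \emph{larger} cell first, which is incompatible with $\lexl$ extending $<$; indeed elsewhere (in the proof of Lemma~\ref{rco_matching}) the paper uses the opposite orientation $p\lexgdot\pi_i(p)$, so the source is not internally consistent on this point. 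Your reading is the standard (and the correct) one.
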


\begin{figure}[h]
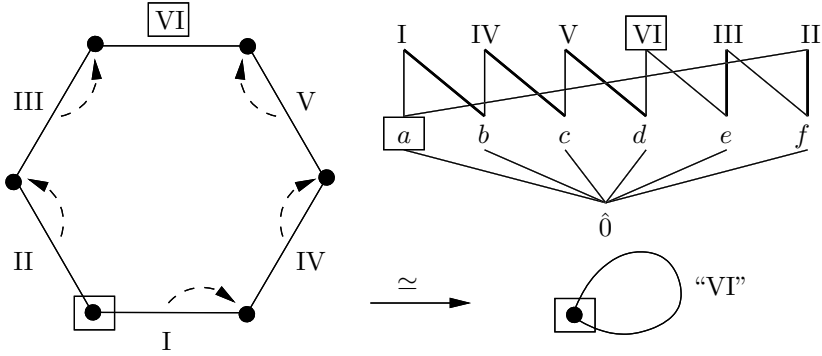

  \begin{picture}(0,0)%
    \includegraphics{acmatch_scaled.pstex}%
  \end{picture}%
  \input{acmatch_scaled.pstex_t}%
  
\caption{To illustrate Theorem \ref{cwm} we take again the empty hexagon $K_2$ of Figure \ref{shellingorder} and its face poset. The bold edges give an acyclic matching which critical cells are the ones in the boxes: one in dimension $1$ and one in dimension $0$. Indeed, the complex is homotopy equivalent to $S^1$. Although we will not get into it here, note that the homotopy equivalence here can be obtained as the concatenation of the collapses indicated by the dashed arrows - that are strongly related with the matching.}\label{acyclic}
\end{figure}

\begin{rem} If we consider the whole $\FF(K)$ we can say that if a
  perfect acyclic matching of $\FF(K)$ exists then $K$ is contractible. Moreover, if there is an acyclic matching of $\FF(K)$ that has critical elements only in one rank level, say the $i$-th, then $K$ is homotopy equivalent to a wedge of $i$-spheres.  
\end{rem}

\subsection{From recursive coatom orderings to acyclic maximum matchings.}
We now describe a construction of certain acyclic maximum matchings of the poset of cells of every shellable regular CW-complex. The core of the argument is Lemma \ref{p_i}, where a convenient linear ordering of all cells is produced.

\begin{rem}\label{comm} It has to be pointed out that our approach via recursive coatom orderings differs from those taken in \cite{BaHe} and \cite{Chari}. Babson and Hersh \cite{BaHe} consider a certain kind of shellability (i.e. lexicographic) of a particular class of simplicial complexes (order complexes of posets) and, in this case, they construct Morse functions ``with a relatively small number of critical cells'' (\cite[Introduction]{BaHe}). Our argument works for any shelling order of any regular CW-complex $K$ and gives always a `best possible' matching. In this sense, when $K$ is the order complex of a poset, and the the shelling order is the lexicographic one, our result improves \cite[Theorem 2.2]{BaHe}. % - in this sense our matching is already ``best possible''. Also the
After the first version of this paper, we learned that also Chari \cite{Chari} proved the existence of `best possible' matchings for regular CW-complexes with a {\em generalized shelling} (for the precise meaning and the definitions see \cite[Page 103 and Corollary 4.3]{Chari}). Our approach is different, and more constructive. We use the algorithmic language of recursive coatom orderings, and exploit the structure given by the shelling-type linear orderings in the construction of the matching. This structure allows a more accurate understanding of the matchings, and we decided to include it as a stepping stone toward the study of the boundary relations in the minimal complexes produced in Proposition \ref{maxmat}, a task that we plan to undertake in future work.

We would like to point out that our shelling-type orderings appear to be a kind of generalized shellings where the bounded faces are exactly the homology facets of the considered CW-complex.
\end{rem}

As a first step, we define the class of posets that will be the object of our study. It is clear that these posets include the posets of cells of (possibly nonpure) regular CW-complexes.

\begin{df} A poset $P$ will be called {\em locally ranked} if all
  its principal lower ideals are ranked. It then possesses a well-defined
  {\em height function} $h$ that assigns to every element the rank of
  the lower principal ideal it generates. Let $h(P):=\max\{h(p)\mid p\in P\}$. For technical reasons, we will denote by $P_i$ the set of all $p\in P$
  with $h(p)=h(P)-i$.
\end{df}

The set of maximal elements of a given locally ranked poset $P$ will be denoted by $M_P$ or simply $M$ if no misunderstanding can occur. 
%Moreover, since maximal elements can have different heights, we write $M_j$, $j=0,\ldots h(P)$, for the set of maximal elements of height $h(P)-j$. 
If an ordering $\rcal$ of $M_P$ is specified, then we can associate to every $p\in P$ a unique element \begin{equation*}m_p:=\max_{\rcal}\{m\in M_P \mid m\geq p\}\end{equation*} (informally, the last among the maximal elements that lie above $p$).

 \newcommand{\with}{\textrm{ with }}

We proceed to prove the key technical lemma toward Proposition \ref{cw_ac}.

\begin{lm}\label{p_i}
Let $(P,<)$ be a locally ranked poset, and let a recursive coatom ordering $\rcal$ be defined on $\hP$. Then it is possible to define a family of total orders $\{(P_i,\rkl_i)\}_{i=0,\ldots ,h(P)}$ with the following properties:\\ given $p\in P_i$, and writing 
$Q_p:=\bigcup_{p'\rkl_i p} \coat(p')$,\\ 
(1) the order induced by $\rkl_{i+1}$ on $D_p:=\coat(p)\setminus Q_p $ can be extended to a recursive coatom ordering $\rcal_p$ of $\coat(p)$ in which the elements of $Q_p$ come first.\\
(2) for all $p'\rkl_{i}p$ in $P_{i}$, if $p',p> z$, then there is $p''\rkl_{i}p$ and $w\in\coat(p)$ such that $p''>w\geq z$.
\end{lm}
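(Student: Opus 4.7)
The plan is to construct the orderings $\rkl_i$ by induction on $i$, and to simultaneously assign to each $p \in P_i$ a recursive coatom ordering $\rcal_p$ of $[\hat{0},p]$ inherited from the ambient RCO of $\hP$. I would carry as a strengthened inductive invariant, at each treated level, the first-coatom property that $\rcal_p$ places $Q_p \cap \coat(p)$ first in its ordering of $\coat(p)$. Note that part (1) of the lemma splits into two pieces: agreement of $\rcal_p$ with $\rkl_{i+1}|_{D_p}$ on $D_p$, which will hold trivially by construction, and this first-coatom property, which only involves data at levels $\leq i$.

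For the base case $i=0$, $P_0 = M_P = \coat(\hat{1})$, so I would take $\rkl_0 := \rcal_{\hat{1}}$ and, for each $p \in P_0$, take $\rcal_p$ as provided by Definition \ref{def_rca}(1). This $\rcal_p$ automatically places $\coat(p) \cap \bigcup_{q \rcal_{\hat{1}} p} \coat(q) = Q_p \cap \coat(p)$ first, and condition (2) at level $0$ reduces to the RCO condition (2) at $\hat{1}$.

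For the inductive step $i \to i+1$, I would list $P_i$ as $p_1 \rkl_i \cdots \rkl_i p_k$, set $D_{p_j} := \coat(p_j) \setminus \bigcup_{r<j}\coat(p_r)$, and define $\rkl_{i+1}$ by concatenating the $D_{p_j}$'s in this order, using $\rcal_{p_j}|_{D_{p_j}}$ inside each block. Local rankedness of $P$ ensures that the $D_{p_j}$'s partition $P_{i+1}$, since every element of $P_{i+1}$ lies below some maximal element and is therefore a coatom of some element of $P_i$. For $p' \in D_{p_{j^*}}$ I take $\rcal_{p'}$ to be the RCO of $[\hat{0},p']$ inherited from $\rcal_{p_{j^*}}$ via Definition \ref{def_rca}(1). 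Part (1) at level $i$ then follows at once from this construction combined with the inductive invariant.

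What remains---verifying condition (2) at level $i+1$, and establishing the first-coatom invariant at the new level---rests on a common mechanism. When two elements $p_r \rkl_i p_s$ in $P_i$ both sit above a common $z$, condition (2) at level $i$ yields a coatom $w$ of $p_s$ with $w \geq z$ coming from some $p_t \rkl_i p_s$; since $\coat(p_t) \subseteq Q_{p_s}$, the first-coatom invariant at level $i$ forces $w$ to precede every element of $D_{p_s}$ in $\rcal_{p_s}$. Applying the RCO condition (2) of $\rcal_{p_s}$ to the pair $\{w,p\}$ then produces the required $p''$ and $w' \in \coat(p)$ in $P_{i+1}$; a rank check rules out the degenerate case $w = p$, which would place $p$ in $Q_{p_s}$ and contradict $p \in D_{p_s}$. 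I expect the main obstacle to be precisely this interplay between the global order $\rkl_{i+1}$ and the various local RCOs $\rcal_{p_j}$, which is why the first-coatom invariant has to be maintained as part of the induction from the very outset.
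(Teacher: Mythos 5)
Your proposal is correct and mirrors the paper's argument closely: both proceed by induction on $i$, define $\rkl_{i+1}$ by concatenating the blocks $D_{p_j}=\coat(p_j)\setminus\bigcup_{r<j}\coat(p_r)$ in $\rkl_i$-order, take $\rcal_{p'}$ to be the RCO of $[\hat 0,p']$ inherited (via Definition \ref{def_rca}.(1)) from the $\rkl_i$-first element of $P_i$ above $p'$ (the paper's $q_{p'}$), and then combine condition (2) at level $i$ with the RCO condition (2) of the local ordering to verify both condition (2) at level $i+1$ and the first-coatom property (which the paper formalizes through the sets $A_p,B_p$). One small point to repair: in the nonpure case an element of $P_{i+1}$ can be maximal in $P$ and therefore is \emph{not} a coatom of anything in $P_i$, so the $D_{p_j}$ do not quite partition $P_{i+1}$ as you assert; the paper deals with these stray maximal elements by the extra clause $m_x\rcal y$ for $y\in M$ in its definition of $\rkl_{i+1}$.
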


\begin{proof}[{\bf Proof.}] 
The orderings $\rkl_i$ will be defined recursively for increasing $i$. First, since $P_0\subset M$, it makes sense to let $\rkl_0$ coincide with the given recursive coatom ordering $\rcal$. By hypothesis, for every $p\in P_0$ there is a recursive coatom ordering $\rcal_p$ of $P_{\leq p}$ in which the elements of $Q_p$ come first. Therefore we can define $\rkl_1$ by declaring 
$$x\rkl_1 y \Leftrightarrow \left\{\begin{array}{ll}
x\rcal_p y & \textrm{if there is } p\in P_0 \with x,y\in D_p, \\
p\rkl_0 q &  x\in D_p, y\in D_q,\\
m_x\rcal_{\1} y & \textrm{if }  y\in M.
\end{array}\right.$$  
This ordering is well-defined because by construction $D_P\cap D_q =\emptyset$ if $p\neq q$. Moreover, it clearly satisfies the requirement.

Now let $i>1$ and suppose that the orderings $\rkl_j$ are defined for $j\leq i$. \\
{\em Definitions:} For $p\in P_i$ let $q_p:=\min_{\rkl_{i-1}}\{q\in P_{i-1}\mid q>p\}$ (and note that this implies $p\in D_q$). 
%By inductive hypothesis, $\rkl_i$ induces a r.c.a. on $\coat(q)$ such that the elements of $Q_q$ come first. Therefore, we define 
Moreover, let $$N_p:=\coat(p)\setminus \{y\in\coat(p')\mid p'\in\coat(q_p),\, p'\rkl_j p\}$$ and note that by definition  $P_{i+1}=\coprod_{p\in P_i}N_p$.
We define also $$A_p:=\{y\in \coat(p)\mid y<q' \textrm{ for } q'\rkl_{i-1} q_p \}\textrm{ and } B_p:=Q_p\cap P_{<q_p},$$
so that $N_p=\coat(p)\setminus B_p$ (see Figure \ref{fig1}).

\begin{figure}[h]
  \begin{picture}(0,0)%
    \includegraphics{morsefig_scaled_2.pstex}%
  \end{picture}%
  \input{morsefig_scaled_2.pstex_t}%
  
\caption{}\label{fig1}
\end{figure}

{\em Remark:} For every $p\in P_i$ we have $A_p\subseteq B_p$. In fact,
%For $q\in P_{i-1}$ and $p\in D_q$, we have $\{x\in (P_{i+1})_{<p}\mid x<y \textrm{ for } y\in Q_q\}\subset B_p^q $, where $B_p:=\{x\in \coat(y)\mid  y\in\coat(q),\, y\rkl p  \}$.\\
%To see this, let 
given  $p\in P_i$ and $x\in A_p$, by assumption on $\rkl_{i-1}$ there is $w\in \coat(q_p)$ such that $w>x$ and $w\rkl_i p$.

Because $\rkl_i$ induces a recursive coatom ordering on $\coat(q_p)$, we know that there is a recursive coatom ordering $\rcal_p$ of $\coat(p)$ such that the elements of $B_p$ come first. %and that for any $z<p$ with $z<p'\lessdot q$, $p'\rkl_j p$, there are $p''\lessdot q$ and $x\lessdot p$ such that $p''\rkl_jp$ and $p''>x\geq z$. 

For $x,y\in P_{j+1}$ we define:
$$x\rkl_{i+1} y \Leftrightarrow \left\{\begin{array}{ll}
x\rcal_p y & \textrm{if there is } p \textrm{ such that } x,y \in N_p,\\
p\rkl_i p' & \textrm{if } x\in N_{p},\, y\in N_{p'},\\
m_x\rcal y & \textrm{if } y\in M 
\end{array}\right.$$  

At this point it is worth to point out that, given $p\in P_i$, $Q_p=\bigcup_{p'\rkl_i p} N_p$ and $D_p=N_p$.

We have now to check the conditions. (1) is clear: given $p\in P_i$ and
 $D_p=N_p$, $\rcal_p$ is a recursive coatom ordering of $\coat(p)$ such that the
 elements of $B_q$, and thus every $x\in \coat(p)\setminus
 Q_p$, come first. For (2) take $x,x'\in P_{i+1}$ such that
 $x'\rkl_{i+1}x$ and  $z<x',x$. If $x'\in N_{p'}$ and $x\in N_p$ for
 $p\neq p'$, then we have $p'\rkl_i p$, and by property (2) of
 $\rkl_i$ there is $p''\rkl_i p$ and $y\in \coat(p)$ such that $z\leq
 y\leq p''$. Applying Definition \ref{def_rca}.(2)  to
 $\rcal_p$ we obtain an $x''\rcal_p x$ and a $w\in \coat(x)$ such
 that $z\leq w < x''$. The proof is concluded by the remark that
 $x''\rcal_p x$ implies $x'' \rkl_{i+1} x$ because $x\in N_p$. 
\end{proof}

\begin{df}[Shelling-type orderings] Let $P$ be a locally ranked poset.
We introduce functions $\pi_i: P_i\rightarrow P_{i+1}$ defined by
$$\pi_i(q):=\max_{\rkl_{i+1}}\{p\in P_{i+1}\mid q>p \}, $$
where the $\rkl_i$ are the orderings associated to some shelling via Lemma \ref{p_i}.
 
Then we define a linear extension $\lexl$ of $P$ by:
$$p\lexl q \Leftrightarrow \left\{\begin{array}{ll}
p\rkl_i q & \textrm{ if there is }i \textrm{ such that }p,q\in P_i,\\
p\rkleq_{i}\pi_{i}\pi_{i+1}\cdots \pi_{j-1}(q) & \textrm{ if }p\in P_i,\, q\in P_j\textrm{ and }i>j.
\end{array}\right.$$ 
The easy check that this is a well-defined linear order is left to the
reader. Every linear extension $\lexl$ of $P$ 
%whose restrictions $\rcal_i$ satisfy the properties of Lemma
%\ref{p_i} 
that is constructed in this way from a recursive coatom ordering 
will be called a {\em shelling-type ordering} of $P$.
\end{df}

We can now construct an acyclic matching for any shelling-type ordering of a locally ranked poset.

\begin{lm}\label{rco_matching} Every shelling-type ordering $\lexl$ %recursive coatom ordering $\rcal$ 
of a locally ranked poset $P$ induces an acyclic matching $\M$ on $P$. 
\end{lm}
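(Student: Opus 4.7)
The plan is to construct $\M$ directly from the functions $\pi_i$ introduced for the shelling-type ordering and to use $\lexl$ itself as the witnessing linear extension in Lemma~\ref{ac_linext}. Concretely, I would process the levels $i = 0, 1, \ldots, h(P)-1$ in order, and within each level scan $p \in P_i$ in $\rkl_i$-order, adding the pair $(p, \pi_i(p))$ to $\M$ whenever $\pi_i(p) \in N_p$ and $p$ has not already been added as a second coordinate at an earlier stage. The partition $P_{i+1} = \coprod_{p \in P_i} N_p$ obtained in the proof of Lemma~\ref{p_i} ensures that each $q \in P_{i+1}$ lies in at most one $N_p$, so $q$ appears as a second coordinate at most once; top-down processing guarantees $p$ appears as a first coordinate at most once; and the rule prevents any element from playing both roles. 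Hence $\M$ is a well-defined matching.

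The next step is acyclicity: by Lemma~\ref{ac_linext} it suffices to show $q \lexldot p$ for each $(p, q) \in \M$ with $p \in P_i$ and $q = \pi_i(p)$. I would establish this by ruling out any $x \in P$ with $q \lexl x \lexl p$, via a case analysis on $h(x)$. If $x \in P_{i+1}$, then $x \lexl p$ gives $x \rkleq_{i+1} \pi_i(p) = q$, contradicting $q \lexl x$. If $x \in P_j$ for $j > i+1$, the identity $\pi^{j-i}(p) = \pi^{j-i-1}(q)$ makes the two conditions $x \rkleq_j \pi^{j-i-1}(q)$ and $x \rkg_j \pi^{j-i-1}(q)$ directly contradictory. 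If $x \in P_i$, the conditions become $x \rkl_i p$ and $\pi_i(x) \rkgeq_{i+1} q$, which I would rule out by exploiting that $\rkl_{i+1}$ orders $P_{i+1}$ by $\rkl_i$ on the owners of the $N_{p'}$-strata: both subcases $\pi_i(x) \in N_x$ and $\pi_i(x) \in B_x$ place $\pi_i(x)$ in some $N_{x'}$ with $x' \rkleq_i x \rkl_i p$, hence strictly $\rkl_{i+1}$-earlier than every element of $N_p$. Finally, for $x \in P_j$ with $j < i$, I would set $y := \pi^{i-j}(x) \in P_i$ and reduce to the previous case.

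The main technical obstacle will be the case $x \in P_i$: it requires unwinding the recursive definition of $\rkl_{i+1}$ from Lemma~\ref{p_i} together with the fact that within each $\rcal_{p'}$ the elements of $B_{p'}$ are placed before those of $N_{p'}$, which forces $\pi_i(x) = \max_{\rcal_x} \coat(x)$ to land in an $N$-stratum controlled by some $x' \rkl_i p$. Once this is in place, the remaining cases are a clean enumeration, and the matching property itself follows directly from the partition established in Lemma~\ref{p_i}.
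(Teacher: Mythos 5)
Your argument takes the same route as the paper's: construct $\M$ greedily level by level from the $\pi_i$'s and then verify acyclicity by invoking Lemma~\ref{ac_linext}, showing that each matched pair is a cover relation in the shelling-type ordering $\lexl$. The paper compresses this verification into the single clause ``by construction, $p\lexgdot\pi_i(p)$''; you flesh it out with a case analysis on the height of a hypothetical intermediate $x$, and the four cases are correctly dispatched. The one substantive deviation is your rule for adding a pair: you require $\pi_i(p)\in N_p$, whereas the paper's algorithm adds $(p,\pi_i(p))$ whenever $\pi_i(p)\neq\pi_i(p')$ for every $\rkl_i$-earlier $p'$. Your condition is strictly stronger, and it is precisely what lets the $x\in P_i$ subcase close: you need $q=\pi_i(p)$ to sit in $N_p$ so that $\pi_i(x)$, which lies in $N_{x'}$ for some $x'\rkleq_i x\rkl_i p$, falls $\rkl_{i+1}$-strictly before $q$. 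With the paper's weaker rule a pair with $\pi_i(p)\in B_p$ could in principle be added, and for such a pair the adjacency in $\lexl$ need not hold (though the matching is still acyclic by a direct argument). In the CW-poset case the two rules coincide, essentially by the sphere argument in case (i) of the proof of Proposition~\ref{cw_ac}; for the lemma at its stated level of generality (arbitrary locally ranked posets), your reformulation is the cleaner hypothesis to feed into Lemma~\ref{ac_linext}.
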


\begin{proof}[{\bf Proof.}] For $i=1,\ldots , h(P)$ let $\rkl_i$ denote the restriction of $\lexl$ to $P_i$. By definition, every $(P_i,\rkl_i)$ satisfies the claim of Lemma \ref{p_i}. We write  $P_i=\{p^i_1,\ldots , p^i_{k_i}\}$, where $p^i_j \rkl_i p^i_{j+1}$ for all $j=1,\ldots , k_i$.

%Our starting point will be the family of linear orderings $(P_i,\rkl_i)$ that by definition satisfy the claims of Lemma \ref{p_i}. 
%Indeed, we shall first construct a linear extension $\lexl$ of $P$ and then use it to define the matching (that will be automatically acyclic by Lemma \ref{ac_linext}). The work will be then to determine the critical cells.

{\em Definition of the matching} $\M$: %The definition will be recursive, but it will nevertheless allow an explicit, nonrecursive description of the critical cells.

%First, for all $i=0\ldots h(P)$ write  $P_i=\{p^i_1,\ldots , p^i_{k_i}\}$, where $p^i_j \rkl_i p^i_{j+1}$ for all $j=1,\ldots , k_i$.
 
\noindent We start with the one-element matching $\M:=\{(p^1_1, \pi_1(p^1_1))\}$. For every $j=2,\ldots , k_1$ we add $(p^1_j, \pi_1(p^1_j))$ to $\M$ if $\pi_1(p^1_j)$ is not already matched (or, equivalently, if $\pi_1(p^1_l)\neq \pi_1(p^1_j)$ for all $l<j$).\\
For $i=1,\ldots h(P)$ we further expand $\M$ as follows:
for $j=1,\ldots , k_i$, if $p^i_j$ is not already matched and $\pi_i(p^i_j)\neq \pi_i(p^i_l)$ for all $l<j$, then add $(p^i_j,\pi_i(p^i_j))$ to $\M$.  \\
Since, by construction, $p\lexgdot \pi_i(p)$ whenever $(p,\pi_i(p))\in\M$, this matching is acyclic by Lemma \ref{ac_linext}.
%Now one sees easily by construction that if an element $p\in P_i$, $i\geq 0$ , is not matched, then there is some $p'\in P_i$ such that $p'\rkl_{i} p$, and $x:=\pi_{i}(p)=\pi_{i}(p')$.
\end{proof}

So far we stayed in the full generality of locally ranked posets. If we restrict ourselves to the case of posets of cells of CW-complexes, we can have even more control on the critical elements. The stepping stone for this is the following easy lemma, that we prove for completeness.

\begin{lm}\label{sphere} Let $K$ be a regular CW-decomposition
  of a sphere. Then in every shelling order of $K$ the only
  homology facet is the last one.
\end{lm}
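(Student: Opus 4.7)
The plan is to establish two things: (i) the last facet of any shelling is a homology facet, and (ii) it is the \emph{only} one, by a Betti-number count.

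For (i), I would use the fact that a regular CW-decomposition of $S^n$ is pure of dimension $n$, and that every $(n-1)$-cell is a face of exactly two $n$-cells (since $S^n$ is a closed manifold without boundary). Let $F_1,\ldots,F_N$ be a shelling order and consider the last facet $F_N$. For any $(n-1)$-face $\tau$ of $F_N$, the manifold property forces $\tau$ to be a face of exactly one other $n$-cell $F_j$ with $j\neq N$, hence $\tau\in F_1\cup\cdots\cup F_{N-1}$. Any lower-dimensional face of $F_N$ lies below some such $\tau$, hence also belongs to the union of earlier facets. Therefore $\partial F_N\subseteq F_1\cup\cdots\cup F_{N-1}$, which (by the definition of restriction in \cite[Definition~13.1]{BjW2}) says that the restriction $\mathcal{R}(F_N)$ is the whole cell $F_N$; equivalently, $F_N$ is attached along its entire boundary, so it is a homology facet.

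For (ii), I would invoke the standard fact that, in any shelling of a regular CW-complex $K$, the number of homology facets of dimension $d$ equals the reduced Betti number $\dim\tilde{H}_d(K)$ (this is because $K$ is homotopy equivalent to a wedge of spheres, one sphere of dimension $d$ for each homology facet of dimension $d$; see e.g.\ \cite{BjW2}). Since $\tilde{H}_d(S^n)=0$ for $d\neq n$ and $\dim\tilde{H}_n(S^n)=1$, there is exactly one homology facet in any shelling of $K$. Combined with (i), this unique homology facet must be $F_N$.

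The main obstacle will just be locating the correct reference for the homology-facet count in the regular (rather than simplicial) CW-setting, and phrasing the manifold argument cleanly in the language of restrictions; the combinatorial content is otherwise minimal, as the closed-manifold property of $S^n$ does all the work.
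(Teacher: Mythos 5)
Your proof is correct but takes a genuinely different route from the paper's. The paper argues by contradiction using contractibility: if the last facet $F$ of a shelling of $K \cong S^d$ were not the unique homology facet, the subcomplex $K'$ built from the earlier facets would be a shellable complex with a homology facet, hence not contractible; yet $K'$ is homeomorphic to $S^d$ minus an open cell, which is contractible. Your proof is instead direct and splits into two independent halves: in step (i) you use the closed-manifold property of $S^n$ (each $(n-1)$-cell is a face of exactly two $n$-cells) to conclude that $\partial F_N$ lies entirely in the union of earlier facets, so $F_N$ is a homology facet; in step (ii) you count homology facets via the wedge-of-spheres theorem against the Betti numbers of $S^n$ to see there is exactly one. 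Note that the paper's claim that a shellable complex with a homology facet is not contractible is itself a consequence of the wedge theorem, so the two proofs have comparable topological overhead. What your approach buys is the transparent, local reason in step (i) that the last facet must be a homology facet, which the paper's indirect argument leaves implicit; in exchange, you lean on pseudomanifold facts about regular CW-decompositions of spheres (purity of dimension $n$, and the two-facet property of codimension-one cells), which the paper sidesteps by working directly with the homeomorphism $\phi : K \to S^d$ rather than with the combinatorics of the cell structure.
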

\begin{proof}[{\bf Proof.}]
The argument is by contraposition. Indeed, if the claim would not
hold, then there would be a counterexample, say a regular CW-complex
$K$, a homeomorphism $\phi: K \rightarrow S^d$, and a shelling order on
the facets of $K$ such
that the last facet, call it $F$, is not a homology facet. %(Bj\"orner 4.3)
This means that the union $K'$ of all the facets other than $F$ is a
shellable complex with still a homology facet - in particular, it is
not contractible. But on the other hand, this complex has to be
homeomorphic to $S^d\setminus \phi(F\setminus K')$, which is
contractible because $F\setminus K'$ is. A
contradiction follows.
\end{proof}

% 
%{\em (Questa sarebbe la proposizione di cui ho chiesto a Kozlov se la riteneva plausibile)}
\begin{prop}\label{cw_ac} Every shelling of a regular CW-complex $K$ induces an acyclic matching of the poset of faces of $K$. Moreover, the critical cells of this matching correspond to the homology facets of the given shelling. 
\end{prop}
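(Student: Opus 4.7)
The first assertion is a direct concatenation of the machinery already developed. By Theorem \ref{cwshell}, a shelling of $K$ corresponds to a recursive coatom ordering of $\hP$ with $P=\FF(K)$. Since $\FF(K)$ is locally ranked, Lemma \ref{p_i} yields the total orderings $\rkl_i$ on the strata $P_i$ together with the associated shelling-type ordering $\lexl$ on $\FF(K)$; Lemma \ref{rco_matching} then extracts the acyclic matching $\M$. So the existence claim is immediate.

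For the identification of the critical cells, my plan is to trace the matching algorithm of Lemma \ref{rco_matching} stratum by stratum. A cell $p\in P_i$ is added to $\M$ as the top of the pair $(p,\pi_i(p))$ precisely when $\pi_i(p)$ has not already been claimed by a $\rkl_i$-earlier $p'\in P_i$; and when $D_p\neq\emptyset$, the element $\pi_i(p)$ lies in $D_p$, which, by pairwise disjointness of the sets $\{D_q\}_{q\in P_i}$, is uniquely associated to $p$. Hence in this case $p$ is always matched upward, so any critical cell must satisfy $D_p=\emptyset$. The topological meaning of this condition is then supplied by Lemma \ref{sphere}: the ordering $\rcal_p$ of $\coat(p)$ furnished by Lemma \ref{p_i} is, by Theorem \ref{cwshell}, a shelling of the sphere $\partial p$, and $\pi_i(p)$ is its last facet --- hence its unique homology facet. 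Consequently $D_p=\emptyset$ is equivalent to saying that every coatom of $p$, and therefore the whole boundary $\partial p$, is already contained in $\bigcup_{q\rkl_i p}q$; this is precisely the Bj\"orner--Wachs characterization of $p$ as a homology facet of the given shelling.

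It remains to verify the converse direction of the correspondence: that a cell $p$ with $D_p=\emptyset$ is never accidentally rescued by a downward match, and, more delicately, that every non-homology cell does in fact get matched. I would handle both by induction on $i$, descending through the strata: a maximal cell with $D_p=\emptyset$ has no cell above it, so it is automatically critical, while for a non-maximal $p$ with $D_p=\emptyset$ I would use property (2) of Lemma \ref{p_i}, together with the sphere-shelling argument applied one level above, to ensure that $p$ occurs as $\pi_{i-1}(q)$ for a suitable $q\in P_{i-1}$ that is still available to be matched with $p$. The main obstacle is precisely this propagation step: controlling the cascade of the $\pi_j$-images across the strata so that no spurious critical cells arise, and so that the bijection critical cells $\longleftrightarrow$ homology facets is cleanly established.
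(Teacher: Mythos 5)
Your first paragraph, and the observation that $\pi_i(p)\in D_p$ whenever $D_p\neq\emptyset$ (so that $\pi_i(p)$ is never claimed by a $\rkl_i$-earlier cell, hence $p$ must end up matched either downward or, when its turn comes, upward), are correct and agree with what drives the easy half of the paper's argument. You also correctly identify that the real work is the converse: ruling out non-maximal critical cells, equivalently showing that a non-maximal $p$ with $D_p=\emptyset$ is always absorbed into the matching. But your proposed strategy for closing that gap --- using property~(2) of Lemma~\ref{p_i} together with the sphere-shelling argument ``one level above'' to guarantee that $p$ occurs as $\pi_{i-1}(q)$ for an available $q$ --- is not enough, and the paper does not proceed this way.

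The problem is that there is no reason for $p$ to equal $\pi_{i-1}(q)$ for \emph{any} $q\gtrdot p$, and even when such a $q$ exists it may already be matched. The paper handles this by a trichotomy on how the cells $q\gtrdot p$ are matched. If some $q\gtrdot p$ is matched downward as $(q,\pi_{i-1}(q))$ (necessarily with $\pi_{i-1}(q)\neq p$, since $p$ is critical), or if some $q\gtrdot p$ is unmatched but still $p\rkl_i\pi_{i-1}(q)$, then Lemma~\ref{sphere} applied to the shelling of $P_{<q}$ yields a contradiction: $p$ would be a homology facet of this shelled sphere without being its last facet. If $q\gtrdot p$ is unmatched with $\pi_{i-1}(q)=p$, the algorithm would have matched them --- contradiction again. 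These cases roughly match your sketch. But the crucial remaining case --- \emph{every} $q\gtrdot p$ is itself matched upward, i.e.\ $q=\pi_{i-2}(w_q)$ for some $w_q\in P_{i-2}$ --- cannot be handled by the sphere argument at all, because no $q$ offers a downward edge to $p$. The paper settles it with a structural fact about CW-posets: by \cite[Proposition~2.2]{BCW} every length-$2$ interval has exactly four elements (the diamond property). Enumerating the covers $q_1,\dots,q_k$ of $p$ and defining $\phi(j)$ by $[p,w_j]=\{p,q_j,q_{\phi(j)},w_j\}$, the fact that $(w_j,q_j)\in\M$ forces $(w_j,q_{\phi(j)})\notin\M$, hence $w_j\neq w_{\phi(j)}$, and the alternating path $q_j,w_j,q_{\phi(j)},w_{\phi(j)},\dots$ must close up into a cycle --- contradicting the acyclicity of $\M$, which was already established in Lemma~\ref{rco_matching}. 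This use of the diamond property, producing a cycle rather than a sphere-shelling contradiction, is the idea that your plan is missing, and without it the ``propagation step'' you describe cannot be made to work.
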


\begin{proof}[{\bf Proof.}]  It is known that every shelling of a regular CW-complex corresponds to a recursive coatom ordering  of its poset of cells (see e.g. \cite[Theorem 13.2]{BjW2}) and, by Lemma \ref{p_i}, to a family of orderings $(P_i,\rkl_i)$ giving rise to a shelling-type ordering $\lexl$. Via our Lemma \ref{rco_matching}, this $\lexl$ every shelling order of the facets of $K$ defines an acyclic matching $\M$ of the poset of cells $P:=\FF(K)$. We have to study the critical cells.
%{\em Notation:} we define the {\em depth} $d(p)$ of an element $p\in P_i$ to be its rank with respect to the ordering $\rkl_i$.  

Consider a critical element $p\in P_i$ such that $p$ is not maximal in $P$. Several situations can occur:\\
{\em (i) There is $q\gtrdot p$ such that $(q,\pi_{i-1}(q)) \in \M$.} Then $p\rkl_i\pi_{i-1}(q)$ and, since $p$ was not matched, there must be $\tilde{p}\rkl_i p$ such that $\pi_i(p)=\pi_i(\tilde{p})$. In particular, every element of $x\in\coat(p)$ is coatom of some $p'\rkl_i p$ by \ref{p_i}.(1). We may assume without loss of generality that $p'\in\coat(q)$, because else by property (2) of Lemma \ref{p_i} we can find $p''\in \coat(q)$ such that $x<p''$. This all means that, in the shelling of $P_{<q}$ that is induced by $\rkl_i$, the whole boundary of $p$ is already taken when the turn of $p$ comes. But since $p$ is not the last element of this shelling (which is $\pi_{i-1}(q)$), using Lemma \ref{sphere} we get a contradiction with the fact that $P_{<q}$ is a shellable sphere. This case can therefore not enter.$\diamondsuit$\\
{\em (ii) There is $q\gtrdot p$ that is not matched.} If for this $q$ we have $p\rkl_i \pi_{i-1}(q)$, then the same reasoning of item (i) applies to get a contradiction. On the other hand, if $\pi_{i-1}(q)=p$ then our algorithm should have taken the edge $(q,\pi_{i-1}(q)=p)$ into $\M$ when examining the elements of $P_{i-1}$: indeed, $p$ was not already taken as $\pi_{i-1}(q')$ for any $q'\rkl_{i-1} q$ (and actually it will remain free until the end!). So, this second situation can also not happen.$\diamondsuit$\\
{\em (iii) Else:} every $q\gtrdot p$ is matched `from above', i.e., by an edge $(w,\pi_{i-2}(w)=q)$. In this case, let $q_1,\ldots, q_k$ be any enumeration of the elements that cover $p$. We know that no edge $(q_j,p)$ is matched, but we have supposed also that for every $j=1,\ldots,k$ there is $w_j$ such that $(w_j,q_j)\in \M$. Since $P$ is a CW-poset, we know (e.g. by \cite[Proposition 2.2]{BCW}) that every interval of length $2$ has four elements - so that to every $j\in\{1,\ldots,k\}$ we can associate $\phi(j)\in\{1,\ldots , k\}$ such that the interval $[p,w_j]$ has elements $\{p, q_j, q_{\phi(j)}, w_j\}$. In this interval by assumption the edge $(w_j,q_j)$ is matched, and therefore for sure $(w_j,q_{\phi(j)})\not \in \M$. This implies in particular $w_j\neq w_{\phi(j)}$ for every $j$. But then the alternating path $q_j,w_j,q_{\phi(j)},w_{\phi(j)},q_{\phi^2(j)},\ldots$ must be a cycle, because $\phi$ can take only finite many values (we supposed the CW-complexes to be locally finite). Thus, also this case cannot enter.$\diamondsuit$

It follows that every critical element is a maximal element of $P$, i.e., by a facet of $K$. But a maximal element $m\in P_i$ is not matched exactly when $\max_{\rkl_{i+1}}\coat(m)$ is matched by some $p\rkl_i m$ (and hence, by item (i) above, when all its coatoms are). In topological words, $m$ is critical exactly if, when its turn in the shelling comes, its whole boundary was already taken. This means exactly that $m$ is a homology facet of the given shelling.     
\end{proof}

\begin{ex}\label{sto} The acyclic matching depicted in Figure \ref{acyclic} is induced from the shelling order and the recurdive coatom ordering of Figure \ref{shellingorder} by the following shelling-type ordering: $$I\lexl a \lexl b \lexl II \lexl f \lexl III \lexl e \lexl IV \lexl c \lexl V \lexl d \lexl VI \lexl \hat{0}.$$
\end{ex}

\begin{rem}
Proposition \ref{cw_ac} gives a perfect acyclic matching of $\hP$ whenever $P$ is the poset of faces of a regular CW-complex that is homotopy equivalent to a sphere. Indeed, in that case the only critical cell of $P$ can be matched by the added element $\1=\hP\setminus P$.
\end{rem}

%\begin{lm}\label{ind_match} order ideal
%\end{lm}

\section{Shelling-type orderings of oriented matroids}\label{oms}

In this section we apply Proposition \ref{cw_ac} to a special situation, as an attempt to answer \cite[Remark 4.5]{SS} and as a stepping stone to the results of Section \ref{sal_mat}. If we consider the fan defined by a set of real linear hyperplanes, we see that the boundary of the associated polar polytope is a shellable (CW-) sphere. The combinatorics of  real arrangements of hyperplanes is customarily encoded by {\em oriented matroids}. %, that we will introduce in Definition \ref{OM}. 
These combinatorial objects are more general than real linear hyperplane arrangements; however, to every oriented matroid corresponds a shellable CW-sphere that, in case the oriented matroid describes an arrangement, is combinatorially isomorphic to the associated polar polytope. 

In what follows we state the precise definitions and recall the
results that we will need for this paper. The standard reference for a
comprehensive overview on oriented matroids is \cite{BLSWZ}.

In Remark \ref{confronto} we will return to the case where the oriented matroid comes from an arrangement of real hyperplanes to compare our shelling-type orderings to the polar ordering of \cite{SS}.

\newcommand{\VV}{\mathcal{V}}

\begin{df}[Oriented matroid]\label{OM}  Given a ground set $E$, a collection $\VV\subset \{+,-,0\}^E$ is the set of {\em vectors of an oriented matroid $\MM$} if and only if following properties are satisfied:

\begin{itemize}

\item[(1)] $(0,0,\ldots , 0)\in \VV$,

\item[(2)] if $X\in \VV$, then $-X\in \VV$,

\item[(3)] for all $X,Y\in \VV$, $X\circ Y \in \VV$,

\item[(4)] for all $X,Y\in \VV$,\\ given $e,f\in E$ such that $X_e=-Y_e$ and not both $X_f,Y_f$ equal $0$, \\there is $Z\in\VV$ such that $Z_e=0$, $Z_f\neq 0$, and if $Z_i\neq 0$ then $Z_i$ equals $X_i$ or $Y_i$.

\end{itemize}\end{df}

Let us point out that this is only one of the many ways to characterize oriented matroids. For a complete account of the many different possible axiomatizations we refer to Chapter 5 of \cite{BLSWZ}.

\begin{rem} Let $\VV$ be the set of vectors of an oriented matroid $\MM$. Let $\VV^*$ denote the set of all $G\in \{+,-,0\}^E$ such that $\sum_{e\in E} G_eX_e = 0$ for all $X\in\VV$ (the multiplication and the sum being performed by thinking of $+$ as $+1$ and of $-$ as $-1$). Then $\VV^*$ is the set of {\em covectors} of $\MM$. It is a matter of fact that $\VV^*$ satisfies the axioms of Definition \ref{OM}: it is the set of vectors of an oriented matroid that is called {\em dual} to $\MM$ (note that $(\VV^*)^*=\VV$). For a proof of this see \cite[Proposition 3.7.12]{BLSWZ}.
\end{rem}

The {\em support} of a subset $X\subset \{+,-,0\}^E$ is $\supp(X):=\{e\in E\mid X_e\neq 0\}$.  We define a partial order on $\VV$ by setting 

$$X\leq Y \Leftrightarrow \forall e\in E: \left\{\begin{array}{l}

%Y_e=0 \Rightarrow X_e=0 
\supp(X)\subseteq\supp(Y) \textrm{ and} \\

Y_e\neq 0 \Rightarrow X_e\neq -Y_e.

\end{array}\right.$$

\begin{df}\label{faces_OM}The set $\VV^*$ endowed with this ordering is called the {\em face poset} of the oriented matroid $\MM$ and is denoted by $\FF(\MM)$. It has a unique minimal element but in general it possesses many maximal elements, that are called {\em topes} of the oriented matroid. the set of topes of an oriented matroid $\MM$ will be denoted by $\tp(\MM)$ (or just $\tp$).

For $T\in\tp$ and $F\in \VV^*$ we define $T_F\in\tp$ by $(T_F)_e=T_e$ if $F_e= 0$ and $(T_F)_e=F_e$ else (see Remark \ref{geomint} for a geometric interpretation of this operation).
\end{df}

 It turns out that also the set $\tp$ can be given  interesting partial orders. These were introduced by Edelman \cite{ed1} in the context of arrangements of hyperplanes and independently by Edmonds and Mandel \cite{EM} for abstract oriented matroids.

\begin{df}[See also Definition 4.2.9 of \cite{BLSWZ}]
Let an oriented matroid $\MM$ be given and consider its set of topes $\tp$. For $T,T'\in\tp$ let $$S(T,T'):=\{e\in E\mid T_e=-T_e'\}.$$
To every tope $B\in\tp$ we can associate a partial order $\tl_B$ on $\tp$ defined by
$$T_1\tl_B T_2 \Leftrightarrow S(B,T_1)\subset S(B,T_2).$$

The set $\tp$ endowed with the order relation $\tl_B$ is called {\em tope poset of $\MM$ based at $B$} and will be denoted by $\tp_B(\MM)$ or simply by $\tp_B$. This poset is ranked by $r(T)=\vert S(B,T) \vert$. We will use the symbol $\texl$ to indicate total orderings that are linear extensions of the ordering of a tope poset.
\end{df}

It is a nice fact that, for any oriented matroid $\MM$, 
%$\FF(\MM)\setminus \{\0\}$ 
$\FF(\MM)^{op}$ (suitably augmented by an additional $\hat{0}$-element, if needed)
is the poset of faces of a convex polytope that is called the {\em zonotope} of $\MM$. The $1$-skeleton of its dual polytope is isomorphic, as a graph, to the Hasse diagram of $\tp_B(\MM)$ for every $B\in \tp$. In this sense, specifying a linear extension of $\tp_B$ amounts to somehow `specify a direction' in the ambient space of the zonotope. Indeed, such a linear ordering is all what one needs to get a shelling of the zonotope.

\begin{figure}[h]
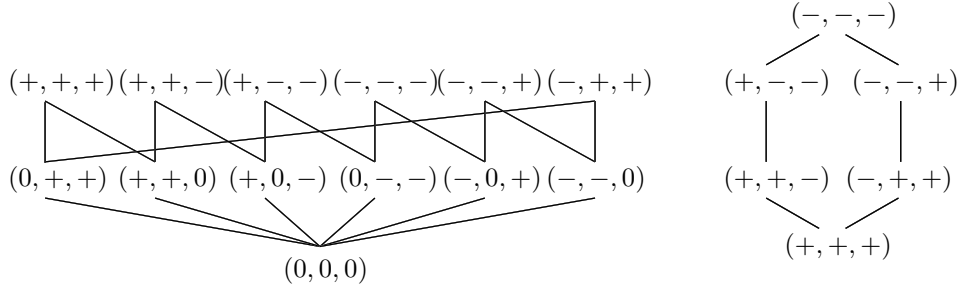
\begin{center}
\noindent%
  \begin{picture}(0,0)%
    \includegraphics{covectors_scaled.pstex}%
  \end{picture}%
  \input{covectors_scaled.pstex_t}%
  \end{center}
\caption{On the left is the face poset $\FF(\MM)$ of an oriented matroid on $3$ elements. Its (augmented) dual $\FF(\MM)^{op}$ appeared already in Figure \ref{hexg}, so that the zonotope of this oriented matroid is the hexagon $K_1$. The dual polytope of $K_1$ is again an hexagon, so that the tope poset $\tp_{(+,+,+)}(\MM)$ for this oriented matroid is the poset depicted on the right. }\label{covectors}
\end{figure}

\begin{thm}[Proposition 4.3.2 of \cite{BLSWZ}]
Let $\MM$ be a simple oriented matroid and $B$ be a tope of $\MM$.
Every linear extension of the tope poset $\tp_B(\MM)$ induces a recursive coatom ordering of $\FF(\MM)$.
\end{thm}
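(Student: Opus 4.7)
The plan is to induct on the rank of $\MM$ and verify directly the two axioms of Definition \ref{def_rca} for $\widehat{\FF(\MM)}$. Fix a linear extension $\texl$ of $\tp_B(\MM)$; since the topes are exactly the coatoms of $\widehat{\FF(\MM)}$, this supplies the required total ordering of $\coat(\hat{1})$. The base case (rank $1$, two topes) is immediate, so the whole argument reduces to propagating the axioms across the induction.

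For condition (1), fix a tope $T$. The lower interval $[\hat{0},T]$ in $\FF(\MM)$ is the (augmented) face poset of the convex cell $T$, which is isomorphic to $\widehat{\FF(\MM_T)}$ for an oriented matroid $\MM_T$ of strictly smaller rank obtained by localizing $\MM$ at $T$. The coatoms of $T$ are in bijection, via $F \mapsto T_F$, with the topes of $\MM$ adjacent to $T$; under the localization, they correspond to the topes of $\MM_T$. The condition ``coatoms shared with earlier topes come first'' translates to requiring that the coatoms $F$ with $T_F \texl T$ appear first in the ordering of $\coat(T)$. I would show that the ordering induced by $\texl$ on the topes of $\MM_T$ is a linear extension of a tope poset of $\MM_T$ for an appropriately chosen base tope depending on $T$, and then invoke the induction hypothesis on $\MM_T$ to obtain the desired recursive coatom ordering of $[\hat{0},T]$.

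For condition (2), suppose $T_1 \texl T_2$ both satisfy $T_1, T_2 \geq y$ for some $y \in \FF(\MM)$. I would use the standard fact that $\tp^y := \{T \in \tp \mid T \geq y\}$ is the tope set of an oriented matroid minor of $\MM$ and inherits from $\tl_B$ the tope-poset order of that minor, with a well-defined $\tl_B$-minimum $B^y$. Since $\texl$ extends $\tl_B$ and $T_1 \texl T_2$, it cannot happen that $T_2 \tl_B T_1$; combined with $B^y$ being the $\tl_B$-minimum of $\tp^y$ and $T_1 \in \tp^y$, this forces $T_2 \neq B^y$. Hence $T_2$ admits a lower cover $T'$ in the restricted order on $\tp^y$. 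Then $T' \tl_B T_2$ gives $T' \texl T_2$, while the shared facet $z := T' \wedge T_2$ between the adjacent topes $T'$ and $T_2$ is a coatom of $T_2$ with $z \geq y$, as required.

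The main obstacle will be making precise the localization dictionary used in step (1): one must pin down the oriented matroid $\MM_T$, establish the bijection between its topes and the facets of $T$, and verify that the restriction of $\texl$ to this tope set really is a linear extension of some tope poset of $\MM_T$, with the base tope tuned so that the ``shared-facets-first'' condition becomes automatic. Once this identification is set up, the sign bookkeeping is routine and the induction carries through; step (2) is then essentially a convexity statement for topes above $y$, standard in the oriented-matroid literature (cf.\ \cite[Chapter 4]{BLSWZ}).
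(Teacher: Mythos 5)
First, a contextual point: the paper itself gives \emph{no} proof of this statement. It is quoted verbatim as Proposition 4.3.2 of \cite{BLSWZ} and used as a black box; the only comparison to be made is with that reference, not with anything internal to the paper.

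Your step (2) is essentially sound: the identification of $\tp^y:=\{T\in\tp\mid T\geq y\}$ with the topes of a minor, the fact that the induced order is a tope-poset order with a unique $\tl_B$-minimum $B^y$, the deduction that $T_2\neq B^y$, and the observation that a covering pair $T'\lessdot T_2$ in $\tp^y$ yields a shared coatom $z=T'\wedge T_2\geq y$ of $T_2$ with $T'\texl T_2$, are all standard (cf.\ \cite[Prop.\ 4.2.10 and Thm.\ 4.1.14]{BLSWZ}).

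Step (1), however, rests on a false premise, and this is where the real content of the theorem lies. The interval $[\hat{0},T]$ in $\FF(\MM)$ is the face lattice of the closed polytopal cell $T$, but it is in general \emph{not} the augmented face lattice $\widehat{\FF(\MM_T)}$ of any oriented matroid $\MM_T$. The cheapest obstruction: the topes of an oriented matroid come in antipodal pairs, so their number is even, whereas a tope $T$ of $\MM$ can have an odd number of coatoms. Concretely, let $\MM$ be the rank-$3$ oriented matroid of four generic planes through the origin of $\mathbb{R}^3$; on $S^2$ one sees $14$ chambers, of which $8$ are spherical triangles, and for such a triangular tope $T$ the interval $[\hat{0},T]$ is the face lattice of a $2$-simplex with exactly three coatoms. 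A rank-$2$ oriented matroid with three topes does not exist, and no simplex face lattice of dimension $\geq 2$ is the (dual) face lattice of a zonotope. (The natural guess $\MM_T:=\MM\setminus(E\setminus\WW_T)$, the restriction to the walls of $T$, does reproduce $[\hat{0},T]$ as a lower interval, but has the same rank as $\MM$, so it does not close the induction either.) Thus the reduction you propose for condition (1) of Definition \ref{def_rca} cannot be carried out, and verifying that each $[\hat{0},T]$ admits a recursive coatom ordering with the previously-seen coatoms first is precisely where the hard work happens. The argument in \cite{BLSWZ} does not descend to a smaller oriented matroid; instead, for each tope $R$ it fixes a maximal chain $\omega_R$ in $[B,-R]\subset\tp_R(\MM)$ and, using the contractions $\MM/|F|$ along the lower faces $F$, builds the required recursive coatom ordering of $[\hat{0},R]$ directly — see Remark \ref{rem_0}, where this machinery is summarized.
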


Thus, an application of Proposition \ref{cw_ac} gives immediately the
following existence result.

\begin{thm}\label{linext_acmatch}
Let $\MM$ be a simple oriented matroid and $B$ be a tope of $\MM$.
Every linear extension $\texl$ of the tope poset $\tp_B(\MM)$ defines an acyclic matching $\M$ of the face poset $\FF(\MM)$ such that the only critical element is $-B$, the tope opposite to $B$.
\end{thm}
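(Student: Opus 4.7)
My plan is to simply concatenate the machinery assembled earlier in the paper. A linear extension of $\tp_B(\MM)$ is the data of a shelling of the zonotope sphere of $\MM$, so Proposition \ref{cw_ac} hands us an acyclic matching of $\FF(\MM)$ whose critical cells are the homology facets of that shelling; Lemma \ref{sphere} then cuts the set of critical cells down to a single facet, and a direct inspection identifies it as $-B$.

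More precisely, I would proceed in four steps. First, invoke the quoted Proposition 4.3.2 of \cite{BLSWZ} to turn $\texl$ into a recursive coatom ordering of $\widehat{\FF(\MM)}$. Second, observe that $\FF(\MM)$ is the (augmented) face poset of the regular CW-sphere $\Sigma$ given by the boundary of the zonotope of $\MM$, so that the topes are precisely the facets of $\Sigma$; Theorem \ref{cwshell} then translates the recursive coatom ordering into a genuine shelling of $\Sigma$ in which the topes are visited in the order $\texl$. Third, apply Proposition \ref{cw_ac} to obtain an acyclic matching $\M$ of $\FF(\MM)$ whose critical cells are exactly the homology facets of this shelling, and appeal to Lemma \ref{sphere} to reduce this set to the single last tope of $\texl$. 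Fourth, verify that this last tope is $-B$: since the rank in $\tp_B(\MM)$ is $r(T)=\vert S(B,T)\vert$ and $S(B,T)=E$ holds precisely when $T_e=-B_e$ for every $e\in E$, i.e.\ $T=-B$, the element $-B$ is the unique maximum of $\tp_B(\MM)$ and hence the last element of every linear extension $\texl$.

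The only point requiring care, rather than a serious obstacle, is the bookkeeping between the two points of view: the combinatorial language of recursive coatom orderings of $\widehat{\FF(\MM)}$ on the one hand, and the topological picture of shellings of $\Sigma$ on the other. In particular one has to check that the facets of $\Sigma$ really correspond to the topes of $\MM$ (so that Lemma \ref{sphere} applies to precisely the part of the matching we care about), and that the orientation of the order is the right one, so that the maximum of $\texl$ coincides with the last facet of the shelling. Once this translation is in place, the theorem is a direct one-line concatenation of Proposition 4.3.2 of \cite{BLSWZ}, Theorem \ref{cwshell}, Proposition \ref{cw_ac}, and Lemma \ref{sphere}.
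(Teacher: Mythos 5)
Your proof follows the same route the paper takes: the paper itself dispatches this result with the single sentence ``an application of Proposition~\ref{cw_ac} gives immediately the following existence result,'' and your four steps are precisely the unpacking of that line (quoted BLSWZ result to get a recursive coatom ordering, Theorem~\ref{cwshell} to turn it into a shelling, Proposition~\ref{cw_ac} plus Lemma~\ref{sphere} to locate the unique critical cell, and the observation that $-B$ is the unique maximum of $\tp_B(\MM)$ and hence last in every linear extension). One small terminological slip worth fixing: the sphere $\Sigma$ whose face poset is $\FF(\MM)$ is the boundary of the \emph{polar} (dual) polytope of the zonotope, not of the zonotope itself---the zonotope boundary has face poset $\FF(\MM)^{op}$, in which the topes are vertices rather than facets. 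Your ensuing assertion that the topes are the facets of $\Sigma$ is nevertheless the correct one and is exactly what the argument needs, so the logic is unaffected.
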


\begin{ex}\label{exlinex} One possible linear extension of the tope poset of Figure \ref{covectors} is given by
$$ (+,+,+) \texl (+,+,-) \texl (+,-,-) \texl (-,+,+) \texl (-,+,+) \texl (-,-,+) \texl (-,-,-). $$

Comparing Figure \ref{shellingorder} we see that this linear extension corresponds indeed to the shelling order $I,II,\ldots, VI$ of $K_2$ via the correspondence of the posets of faces, and thus induces on the poset $\FF(\MM)=\FF(K_2)$ the acyclic matching indicated in Figure \ref{acyclic}. 
\end{ex}

\begin{rem}[On polar orderings]\label{confronto}
%Despite the analogy of the above Lemma, and in spite also of our
%original motivation, 
As we will explain in detail in the next Section, to every real linear arrangement of hyperplanes is associated an oriented matroid whose covectors correspond to the induced stratification of $\mathbb{R}^n$. These `special' oriented matroids can be therefore also given a {\em polar ordering} in the sense of Salvetti and Settepanella \cite{SS}. This makes a comparison of the two orderings possible. The outcome is that shelling-type orderings are
different from the polar orderings of \cite{SS} on linear arrangements (although they can be used for the same scope, as we will see in the next section): indeed, a polar
ordering is {\em never} a linear extension of the face poset (as can
be easily seen comparing Theorem 4 of \cite{SS}). Moreover, the order
induced on the chambers by a polar ordering is never a linear
extension of a poset of regions: otherwise, there would be no other choice for the base chamber $B$ as to
take the chamber containing the basepoint of the polar ordering. But
then we see that there is a maximal chain in $\tp_B$ (determined by the general
position line $V_1$ of \cite{SS})  whose elements form by definition an
initial segment in the order of chambers induced by the polar
ordering. This is clearly incompatible with being a linear extension
of $\tp_B$.

Nevertheless, at a first glance the ordering induced on the chambers
by the polar orders seems to be a shelling order for the zonotope. We
leave this as an open question.
\end{rem}

\begin{rem}\label{rem_0} The proofs of \cite[Proposition 4.3.1 and 4.3.2]{BLSWZ} are
  constructive. Hence, by taking a closer look at the arguments used
  there one can give an explicit description of the shelling-type
  orderings (and thus of the matchings) that result from our construction. To
  do this, we need some notation. For every element $e$ of the oriented matroid let $R(e):=\min_{\texl}\{B_F\mid |F|=e\}$, and let $F(e)$ be the unique face with $F(e)\lessdot R(e)$ and $|F(e)|=e$. Then, for every $R\in\tp_B$ choose a maximal chain $\chain_R$ in the interval $[B,-R]\subset \tp_{R}(\MM)$. For $i=0,\ldots, n$ let $\omega_R(i)$ denote the $i$-th element of the chain (counted from the bottom). % and if $i>0$ write $f_R(i)$ for the unique element of $S(\chain_R(i-1), \chain_R(i))$.

For every maximal element $R$ of $\F$
%examining the proof of \cite[Propositions 4.3.1 and 4.3.2]{BLSWZ}, 
we can express $D_R$, $Q_R$ and $\pi_0(R)$ (see Lemma \ref{p_i}) as follows: \begin{center}$D_R=\{F\in \coat(R)\mid |F|\in S(-R)\}\quad(=\{F\in \coat(R)\mid R_F=T_F\})$,\\ $Q_R=\{F \in \coat(R) \mid |F|\in S(R)\}\quad(=\{F\in \coat(R)\mid R_F\neq T_F\})$,\\ $\pi_0(R)=\omega_R(n)$.\end{center} We conclude that the induced ordering $\rkl_1$ on $\F$ can be expressed by
$$F_1\rkl_1 F_2\Leftrightarrow\left\{\begin{array}{l}
T_{F_1}\texl T_{F_2}\textrm{ or }\\
T_{F_1}= T_{F_2}=:R \textrm{ and } |F_1|\Tl_{R} |F_2|,
\end{array}\right.$$ 
where $\Tl_R$ is the order in which the elements appear as $S(\omega_R(i),\omega_R(i+1))$ for increasing $i$.%along $\omega_R$.
%This should not be surprising for the reader who followed us through Section \ref{comb}, because in general (i.e., for any $F$) we have $$T_F=q(F):=\min_{\texl}\{R\in \tp \mid R>F\}$$ (because $T_F\tleq q(F) \tlneq (-T)_F$).

Moreover, according to the construction of \cite[4.3.1 and 4.3.2]{BLSWZ}, the recursive
ordering of $\coat(F)$ is given as above by any maximal chain in $\tp_F(\MM/|F|)$ that contains $F'$, where $F'$ is the face where $\chain_R$ crosses $|F|$. In particular, the elements of $D_F$ are ordered according to a maximal chain in the interval $[F',-F]\subset \tp_F(\MM/|F|)$, and so on.
\end{rem}

%Later on, we will need to consider the matroids $\MM/X$, where $X$ is the flat spanned by some initial subset (meaning: an order filter w.r.t. the induced order) of the walls of a tope.

%\begin{rem}\label{confronto}kjgljg
%\end{rem}

\section{Acyclic maximum matchings for the Salvetti complex}\label{sal_mat}

\newcommand{\sless}{<_s}

The main motivation of Salvetti and Settepanella for considering polar
orderings in \cite{SS} was to use these total orderings in the
construction of what they call the {\em polar gradient}. The polar
gradient of \cite{SS} is essentially an acyclic maximum matching of the poset
of cells of the Salvetti complex - a regular CW complex that was
introduced by Mario Salvetti in order to model
the homotopy type of the complement of a complexified arrangement of hyperplanes (see Definition \ref{sal_cplx} and \cite{Sal1}).\\

In this section we want to construct acyclic matchings for the Salvetti
complex of linear arrangements using shelling-type orderings. In fact,
the outcome is that linear extensions of tope posets give a very nice
stratification of the Salvetti complex (see Lemma \ref{contr}) and
allow us
to paste together different choices of acyclic matchings of the
strata.\\

Let us begin by the definition of the poset of cells of the Salvetti
complex for a general oriented matroid. We present it here in his
general form and as a formally defined object to underline the fact
that it can be defined in purely combinatorial terms. In a second step
we will introduce the terminology (and the geometric intuition) of
arrangements of hyperplanes.

%It is better to be roughly right than precisely wrong (Keynes)

%In this section we apply the preceding work to the problem posed by
%Salvetti and Settepanella in \cite[Remark 4.5]{SS} of characterizing
%their `polar orderings' in purely combinatorial terms, at least for
%the case of central arrangements. In fact, we are not able to give a
%5direct combinatorial description of these orderings, but we can show
%that any linear extension of the tope poset gives a maximum acyclic
%5matching of the Salvetti Complex. 
%The still lacking step towards a complete answer to the question is formulated at the end of the section as Question \ref{qu_polar}.

\begin{df}\label{sal_cplx}
Given an oriented matroid $\MM$, we define a poset $\SS(\MM)$ (denoted
simply by $\SS$ if no confusion can arise). The elements of $\SS(\MM)$ 
are all pairs $\langle F, T
\rangle$ where $F\in\FF(\MM)$, $T\in \tp(\MM)$ and $F<T$ in
$\FF(\MM)$. The order relation in $\SS$ will be denoted $\sless$
and defined by setting $$\langle F,T\rangle \sless \langle F',T'\rangle
\textrm{ if } F>F' \textrm{ in } \FF(\MM) \textrm{ and } T=T_F'.$$
Recall that the poset $\FF(\MM)$ has a unique minimal element that we
denote by $P$. For any
given tope $T$ let $\SS_T:=\SS(\MM)_{\leq \langle P,T\rangle}$. It is
clear that $\SS_T$ is isomorphic to $\FF(\MM)^{op}$ as a poset. If no confusion can arise we will write just $\SS$, $\FF$, $\tp$ for $\SS(\MM)$, $\FF(\MM)$, $\tp(\MM)$.

Now fix a  ``base tope'' $B\in \tp$. If a linear extension $\texl$ of $\tp_B$ is given, define, for every $R\in \tp$, $$\SS(R):=\bigcup_{T\texl R} \SS_T \quad \textrm{ and  } \quad N(R):= \SS(R)\setminus \SS(R'),$$ where $R'$ is the tope that precedes $R$ in $\texl$.

\end{df}

\begin{ex} The poset of Figure \ref{salposf} is $\SS(\MM)$ for the (realizable) oriented matroid $\MM$ of Figure \ref{covectors}, where for better readability we denoted the covectors by the corresponding strata in $\mathbb{R}^2$ (see Figure \ref{arrgt}). 
\end{ex}

 A {\em real arrangement of hyperplanes} is a set $\AA:=\{H_1,\ldots H_n\}$ where the $H_i$ are codimension $1$ affine subspaces of $\mathbb{R}^d$. The arrangement $\AA$ is called {\em linear} if every $H_i$ is a linear subspace. The combinatorial data of a real linear arrangement $\AA$ is encoded by the associated oriented matroid $\MM_\AA$ of the signed linear dependencies among the vectors $\{v_1,\ldots, v_n\}$ where, for all $i$, $v_i$ is normal to $H_i$. An oriented matroid that is of the form $\MM_\AA$ for some real linear arrangement $\AA$ is called {\em realizable}.

\begin{figure}[h]
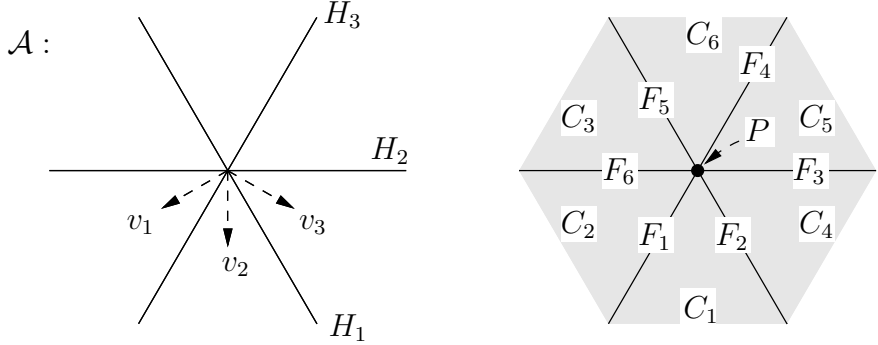

  \begin{picture}(0,0)%
    \includegraphics{om_strata_scaled.pstex}%
  \end{picture}%
  \input{om_strata_scaled.pstex_t}%
  
\caption{Our main example, the arrangement of three lines in the plane. On the left the `plain' arrangement, with our choice of normal vectors to build the oriented matroid $\MM_{\AA}$. On the right, the cells of the induced stratification of $\mathbb{R}^2$.}\label{arrgt}
\end{figure}

%\begin{figure}[h]
%\begin{flushleft}
%\pstexInput{arr_covectors_scaled}
%\end{flushleft}
%\caption{A linear arrangement of $3$ hyperplanes in $\mathbb{R}^2$ with a choice of normal vectors, and the signed vectors of the corresponding oriented matroid. The signed vectors are attached to the set of points that satisfy the corresponding signed linear dependence: for example, the signed vector $(-,+,0)$ is attached to the set of points $x=\lambda_1 v_1 + \lambda_2 v_2 +\lambda_3 v_3$ such that $\lambda_1 v_1 + \lambda_2 v_2 +\lambda_3 v_3 =0$  for $\lambda_1 <0$, $\lambda_2>0$, $\lambda_3=0$. }
%\end{figure}

The relevance of Definition \ref{sal_cplx} comes from the following fundamental result by Mario Salvetti (which actually holds also in a general form for affine arrangements).

\begin{thm}[Theorem 1 of \cite{Sal1}]
Let $\AA$ an arrangement of linear real hyperplanes. Then $\SS(\MM_\AA)\cup\{\0\}$ is the poset of cells of a regular CW-complex, called {\bf Salvetti complex}, that is homotopy equivalent to the complement in $\mathbb{C}^d$ of the complexification of $\AA$. 
\end{thm}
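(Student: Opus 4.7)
The plan is to follow Salvetti's original strategy: realise $\SS(\MM_\AA) \cup \{\hat 0\}$ as the face poset of an explicit regular CW-subcomplex $X$ of the complement
\[ M := \mathbb{C}^d \;\setminus\; \bigcup_{H \in \AA} H_{\mathbb{C}}, \]
(where $H_{\mathbb{C}} \subset \mathbb{C}^d$ denotes the complexification of $H$), and then exhibit $X$ as a deformation retract of $M$.

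For the cellular construction I would associate to each pair $\langle F, C\rangle \in \SS(\MM_\AA)$ a closed topological ball $e(F, C) \subset M$ of real dimension $\codim(F) = d - \dim F$. Geometrically, $e(F, C)$ consists of points whose real part lies in a small transverse disk to $F$ at an interior point $x_F$, and whose imaginary part is a vector of $\mathbb{R}^d$ lying in the chamber $C$, chosen to vary continuously with the real part so as to join up with the neighbouring cells. The simplest model places the cell at the point $x_F + i w_C$ for some generic $w_C \in C$ and extends it by a $\codim(F)$-dimensional disk in the normal direction $N_F := \mathrm{aff}(F)^{\perp}$, bending the imaginary component as one approaches the boundary so as to match adjacent cells. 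Such an $e(F,C)$ avoids every $H_{\mathbb{C}}$ because $w_C$ lies in no hyperplane, so no point of $e(F,C)$ has both real and imaginary parts on the same $H$. One then verifies that the $e(F,C)$'s assemble into a regular CW-complex $X$ whose covering relations reproduce those of $\SS$: a cover $\langle F, C\rangle \lessdot_s \langle F', C'\rangle$, forcing $F \gtrdot F'$ in $\FF(\MM_\AA)$ and $C'$ determined by the rule of Definition \ref{sal_cplx}, corresponds geometrically to $e(F', C')$ appearing in $\partial e(F, C)$ along the sub-stratum where the transverse disk first meets the face $F'$.

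For the homotopy equivalence I would use the real-part map $\mathrm{Re} : M \to \mathbb{R}^d$ as an organising tool. Over each face $F \in \FF(\MM_\AA)$, the fibre of $\mathrm{Re}$ is the complement in $i\mathbb{R}^d$ of the hyperplanes whose real part contains $F$, and this fibre deformation-retracts onto the discrete set $\{iw_C : C \geq F\}$, each point sitting in the corresponding cell $e(F,C)$. Gluing these fibrewise retractions via a partition of unity subordinate to a tubular neighbourhood system of the stratification of $\mathbb{R}^d$ by $\FF(\MM_\AA)$ yields a deformation retraction $M \rightsquigarrow X$.

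The main obstacle is precisely this last gluing step: the fibrewise retractions over adjacent strata must be chosen so as to assemble continuously and land exactly inside $X$. One handles this by induction on strata of decreasing dimension, exploiting the local product structure of $\AA$ near any face $F$ (namely, the germ of $\AA$ at $F$ is isomorphic to a product of the central arrangement $\AA/F$ with the flat $\mathrm{aff}(F)$) and making a coherent choice of auxiliary data $(x_F, w_C, \epsilon)$ indexed by the poset $\FF(\MM_\AA)$. This local-product picture simultaneously guarantees regularity of the CW-structure (each attaching map extends to a homeomorphism from a closed ball) and the cellularity of the resulting global retraction.
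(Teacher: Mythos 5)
This theorem is not proved in the paper under review: it is quoted from Salvetti's original article \cite{Sal1}, so there is no in-text proof to compare your attempt against. Measured instead against Salvetti's own argument, your sketch is essentially the right one. You identify the same two pillars: an explicit geometric realisation of each pair $\langle F, C\rangle$ as a $\codim(F)$-dimensional ball in the complement built around a point $x_F + i w_C$, and a deformation retraction of the complement onto the union of these cells driven by the real-part projection and the local-product structure of the arrangement near each face. Those are precisely the two halves of Salvetti's proof.

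One imprecision worth flagging: you first say the imaginary part of a point of $e(F,C)$ lies \emph{in the chamber $C$} and that this alone keeps the cell off every $H_{\mathbb{C}}$; but a few lines later you (correctly) allow the imaginary component to bend as the real part approaches a boundary face $F''>F$, at which point it must land near $w_{C_{F''}}$, and $C_{F''}$ is in general a different chamber of $\AA$ than $C$. The statement that actually holds, and that makes the construction work, is that when the real part is near a face $F''$ the imaginary part stays inside the open chamber of the subarrangement $\AA_{F''}$ containing $C$; since $C$ and $C_{F''}$ agree in sign on every $H\supseteq F''$, both $w_C$ and $w_{C_{F''}}$ lie in that same convex region, so the bending can be done without the imaginary part ever hitting a hyperplane that the real part also hits. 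With that local correction your avoidance argument goes through, and it is exactly the observation Salvetti exploits to define the cells and to verify regularity. The rest of your sketch (fibrewise retraction over the stratification, inductive gluing via tubular neighbourhoods) matches Salvetti's strategy and is, at the level of detail appropriate for a cited external result, a faithful account of it.
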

\newcommand{\HH}{\mathcal{H}}
\newcommand{\WW}{\mathcal{W}}

We see that, although $\SS$ can be defined for any oriented matroid,
the main topological interest of the construction is in the
context of arrangements of hyperplanes. Therefore we will from now
sometimes use the more geometrically intuitive language of this setting, that we are going to explain.

If $\MM$ is a { realizable} oriented matroid corresponding to the arrangement $\AA$, then the poset $\FF(\MM)$ is the poset of the
closed strata determined by $\AA$ in $\mathbb{R}^d$, ordered by 
inclusion of the topological closures.

\begin{ex} For $\AA$ as in Figure \ref{arrgt}, $\MM_{\AA}$ is the oriented matroid $\MM$ of Figure \ref{covectors}. In particular, we can compare the poset $\FF(\MM)$ of Figure \ref{covectors} with the stratification of $\mathbb{R}^2$ on the right hand side of Figure \ref{arrgt}. For instance, the covector $(+,0,-)$ represents the stratum of all vectors of $\mathbb{R}^2$ which scalar product with $v_1$ is positive, with $v_2$ equals $0$ and with $v_3$ is negative (i.e., the points `in front of' $H_1$, `on' $H_2$ and `behind' $H_3$ with respect to the base chamber $B=(+,+,+)$).  
\end{ex}

The topes are the maximal
strata - i.e., the closure of the connected components of the complement
$\mathbb{R}^d\setminus \bigcup\AA$ of $\MM_\AA$ - and are customarily
called {\em chambers} (or {\em regions}) of $\AA$ (given a set
 $A:=\{a_1,a_2,\ldots, a_n\}$, we
will write $\bigcap A$ for the set $a_1\cap a_2\cap\ldots\cap a_n$ and $\bigcup A$ for  $a_1\cup a_2\cup\ldots\cup a_n$). Accordingly, $\tp_B(\AA)$ is often referred to as the {\em poset of regions} of $\AA$ (e.g., in his first appearance in the context of hyperplane arrangements, see \cite{ed1}). For any two chambers $C_1,C_2$ of $\AA$ (topes of $\MM_\AA$), the elements of $S(C_1,C_2)$ correspond to the hyperplanes that {\em separate}\footnote{The use of the word `separation' arose in the litarature while considering the chambers to be the {\em open} sets that are obtained subtracting $\AA$ from $\mathbb{R}^d$, so that any two chambers are really disjoint and `separated' by the hyperplanes in the set $S(C_1,C_2)$. For consistency we let here the chambers be, as any other face, closed. The combinatorics of course works as well, and we will save some cumbersome distinctions in the last section.} $C_1$ from $C_2$, i.e., the hyperplanes that are met by any line segment connecting a point in the interior of $C_2$ with a point in the interior of $C_1$.  Since the arrangements corresponding to oriented matroids are linear, every chamber is a convex cone. The hyperplanes supporting the facets of the cone determined by the chamber $C$ are called {\em walls} of $C$. The set of walls of $C$ is denoted by $\WW_C$. 
\begin{rem}\label{adjacent} For every wall $H\in\WW_C$ there is a chamber $K\in\tp(\AA)$ such that $S(C,K)=\{H\}$. In fact, this can be taken as the `abstract' definition in the setting of arbitrary oriented matroids.
\end{rem}
\begin{nt}\label{not1} We will denote by $\LL(\AA)=\LL(\MM)$ (or just by $\LL$) the
{\em  lattice of flats} of the underlying matroid; this is indeed a
geometric lattice and we will think of it as of the
poset of intersections of the hyperplanes ordered by reverse
inclusion (see the top of Figure \ref{figurone} for a picture of
$\LL(\AA)$ when $\AA$ is the arrangement of three lines through the
origin of the plane). For every face $F\in\FF(\MM)$ we write $\vert F\vert$ for
what corresponds to the ``affine span'' of $F$, i.e., the flat given by the elements of
$\supp(F)$.  Given any flat $Y\in\LL$, we
denote by $\AA_Y$ the arrangement given by the hyperplanes that
contain $Y$ and set $\AA_Y=:\supp(Y)$. We write $\AA^Y$ for the arrangement $\{H\cap Y \mid
H\not\in \AA_Y\}$ that is determined on $Y$ by the hyperplanes that
intersect $Y$ nontrivially. The oriented matroid associated to $\AA^Y$
is the {\em contraction} $\MM(\AA)/Y$ of the oriented matroid
associated to
$\AA$ (see \cite[Section 3.3]{BLSWZ}).   The natural map $\tp(\AA)\rightarrow \tp(\AA_Y)$ will be
denoted by $\pi_Y$. We will use it to explain the geometric
content of the operation described in Definition \ref{faces_OM}.\end{nt}

\begin{rem}\label{geomint}
Let $\MM$ be a realizable oriented matroid and $\AA$ the corresponding arrangement. Let $C$ be one of its topes (chambers) and
$F$ be some covector (face) of $\MM$ ($\AA$). Then the tope $T_F$
corresponds to the unique chamber that is contained in $\pi_{\vert
  F\vert}(T)$ and contains $F$.
\end{rem}

%For any two topes (chambers) $C_1,C_2$ of $\MM$, the elements of $S(C_1,C_2)$ corespond to the hyperplanes that {\em separate} $C_1$ from $C_2$, i.e., the hyperplanes that are met by the line segment connecting a point in the interior of $C_2$ with a point in the interior of $C_1$. In general, given topes (chambers) $C_0,C_1,\ldots,C_k$ and a set of elements (hyperplanes) $M$, we say that {\em $M$ separates $C_0$ from every $C_i$, $i>0$} if $M\cap S(C_0,C_i)\neq\emptyset$ for every $i>0$.  

\begin{imprem}\label{ir} In all what follows, unless explicitly stated,\\[0.1cm]
\begin{minipage}{\textwidth}\begin{center} 
\fbox{{\em $\AA$ will denote a finite arrangement of $n$ linear hyperplanes in $\mathbb{R}^d$.}}\end{center}
\end{minipage}
Moreover, we fix from now an (arbitrarily chosen) base chamber $B\in\tp(\AA)$ and a (also arbitrary) linear extension $\texl$ of $\tp_B(\AA)$.  
\end{imprem}

Let us also point out that everything we will say can be easily translated into the language of (and thus: holds for) abstract oriented matroids. As the `grammar' and the `vocabulary' for this translation we refer to \cite{BLSWZ}.
%\end{center}} 

\newcommand{\JJ}{\mathcal{J}}

\begin{nt}\label{psi} Given $H\in \AA$, let
  $\AA':=\AA\setminus\{H\}$. Given $C\in\tp(\AA)$, we will write $C'$
  for the unique chamber of $\AA'$ that contains $C$.  This natural
  inclusion of chambers 
%of $\AA$ into the chambers of $\AA'$ 
induces an order preserving map
$$\psi: \tp_{B'}(\AA')\rightarrow \tp_B(\AA);\quad\quad C'\mapsto \min_{\texl}\{ C\in\tp_B(\AA)\mid C\subseteq C'\}.$$
Note that if $C'\in\tp(\AA')$ contains two chambers $C_1,C_2\in\tp(\AA)$ then, up to renumbering, $C_1\tldot_B C_2$. So this definition could have been phrased as well in terms of $\tleq$, the partial ordering of $\tp_B(\AA)$, instead of $\texl$.

This map is clearly injective, and thus for $C'_1,C'_2\in \tp(\AA')$ the ordering $\tleq'$ of $\tp_{B'}(\AA')$ satisfies
\begin{equation}\label{trick}
C'_1\tleq' C'_2 \Leftrightarrow \psi(C'_1)\tleq \psi(C'_2).
\end{equation}

Given any linear extension $\texl$ of $\tp_B(\AA)$ we let let $\texl'$
denote the linear extension of $\tp_{B'}(\AA')$ that is in a sense the
`pullback' of $\texl$ along $\psi$:
$$C'_1\texl' C'_2 :\Leftrightarrow \psi(C'_1)\texl\psi(C'_2).$$
%\min\{C_1\in \tp_B(\AA)\mid C_1\subset C_1'\}\texl\min\{C_2\in\tp_B(\AA)\mid C_2\subset C_2'\}.$$

\end{nt} 

As we will see this construction is canonical.
\begin{lm}\label{quadratocomm} Given  two distinct hyperplanes $H_1, H_2 \in \AA$, for
  both $i=1,2$ write $\AA_i:=\AA\setminus\{H_i\}$ and let $B_i$ be the unique chamber of $\AA_i$ containing $B$. Let $\psi_i$ denote the map $\tp_{B_i}(\AA_i)\rightarrow \tp_B(\AA)$ defined in \ref{psi}. Let then $\widehat{B}$ be the unique chamber of $\AA_1\cap\AA_2$ that contains $B_1$ and $B_2$, and write $\widehat{\psi}_i$ for the corresponding map $\tp_{\hat{B}}\rightarrow \tp_{B_i}(\AA_i)$. Then the diagram of poset maps
$$\begin{CD} \tp_{\widehat{B}}(\AA_1\cap\AA_2) @>\widehat{\psi}_1>> \tp_{B_1}(\AA_1)\\
 @V\widehat{\psi}_2VV @V{\psi}_1VV \\
\tp_{B_2}(\AA_2) @>{\psi}_2>> \tp_{\hat{B}}(\AA)
\end{CD}    $$
commutes.
\label{quadrato}
\end{lm}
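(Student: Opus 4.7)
The plan is to show that, for every $\widehat C\in\tp_{\widehat B}(\AA_1\cap\AA_2)$, both compositions send $\widehat C$ to
$$C^\ast(\widehat C):=\min_{\texl}\{\,C\in\tp(\AA)\mid C\subseteq\widehat C\,\};$$
since this expression is patently symmetric in $H_1$ and $H_2$, the commutativity of the diagram drops out immediately. Order preservation of all four maps is guaranteed by~(\ref{trick}), so only the equality of values needs to be checked.

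First I would record the elementary geometric observation that, because $\AA_1\cap\AA_2\subseteq\AA_i$ for $i=1,2$, every chamber of $\AA_1\cap\AA_2$ is a union of chambers of $\AA_i$. As an immediate consequence, whenever a chamber $C$ of $\AA$ lies inside $\widehat C$, the unique chamber $C_i$ of $\AA_i$ containing $C$ is automatically contained in $\widehat C$ as well.

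Next, fix $i\in\{1,2\}$ and let $C_i^\ast$ denote the chamber of $\AA_i$ that contains $C^\ast(\widehat C)$; by the observation above, $C_i^\ast\subseteq\widehat C$. Unravelling the definition of $\psi_i$ from Notation~\ref{psi}, we have $\psi_i(C_i^\ast)=\min_{\texl}\{C\in\tp(\AA)\mid C\subseteq C_i^\ast\}$, and the $\texl$-minimality of $C^\ast(\widehat C)$ in the larger family $\{C\in\tp(\AA)\mid C\subseteq\widehat C\}$ forces $\psi_i(C_i^\ast)=C^\ast(\widehat C)$.

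It remains to identify $\widehat\psi_i(\widehat C)$ with $C_i^\ast$, and this is the one step where I would explicitly appeal to~(\ref{trick}): since the linear extension $\texl_i$ on $\tp_{B_i}(\AA_i)$ is the pullback of $\texl$ along $\psi_i$, the $\texl_i$-minimum of $\{C_i\in\tp(\AA_i)\mid C_i\subseteq\widehat C\}$ is precisely the chamber whose $\psi_i$-image is $\texl$-minimal in $\{\psi_i(C_i)\mid C_i\in\tp(\AA_i),\,C_i\subseteq\widehat C\}$. Because $\psi_i(C_i^\ast)=C^\ast(\widehat C)$ is already the $\texl$-minimum of the \emph{larger} set $\{C\in\tp(\AA)\mid C\subseteq\widehat C\}$, it is a fortiori minimum in the image, so $\widehat\psi_i(\widehat C)=C_i^\ast$. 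Combining the two identifications yields $\psi_i\circ\widehat\psi_i(\widehat C)=\psi_i(C_i^\ast)=C^\ast(\widehat C)$ for both $i$, as required. The one item requiring care is the faithful translation between the $\texl_i$-order on $\tp(\AA_i)$ and the $\texl$-order on its image in $\tp(\AA)$, but this is exactly the content of~(\ref{trick}) and poses no real obstacle.
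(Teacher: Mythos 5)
Your proof is correct and follows essentially the same route as the paper's: both show that $\psi_i\circ\widehat\psi_i(\widehat C)$ equals $\min_{\texl}\{C\in\tp(\AA)\mid C\subseteq\widehat C\}$, a quantity manifestly symmetric in $i$. You spell out the intermediate identification $\widehat\psi_i(\widehat C)=C_i^\ast$ more explicitly than the paper (which compresses it into one application of~(\ref{trick}) plus a remark on the size of the relevant subposet), but the underlying mechanism is identical.
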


\begin{proof} For brevity, let $\widehat{\AA}:=\AA_1\cap\AA_2$. Consider $\widehat{C}\in\tp(\widehat{\AA})$. By definition we have
$$\widehat{\psi}_i(\widehat{C}) = \min_{\tleq_i}\{C'\in\tp_{B_i}(\AA_i)\mid C'\subset \widehat{C}\}, $$
where $\tleq_i$ is the ordering of $\tp_{B_i}(\AA_i)$. This, in view of equation \ref{trick}, means
$$\min_{\tleq}\{C\in\tp(\AA)\mid C\subset \widehat{\psi}_i(\widehat{C})\}\tleq \min_{\tleq} \{C\in\tp(\AA)\mid C\subseteq C'\subseteq\widehat{C}\textrm{ for a }C'\in\tp_{B_i}(\AA_i) \}$$
or, equivalently,
$$\min_{\tleq}\{C\in\tp(\AA)\mid C\subset \widehat{\psi}_i(\widehat{C})\}\tleq \min_{\tleq} \{C\in\tp(\AA)\mid C\subseteq\widehat{C} \}. $$
Now, because we are taking away from $\AA$ exactly two hyperplanes, the right side of the last expression takes the minimum over a poset that either has only one element, or is a two-element chain, or has four elements and rank two (depending on whether none, one or both of $H_1$ and $H_2$ cut $\widehat{C}$). Thus, in any case the right side above identifies a unique $C\in\tp_B(\AA)$, and this is $\psi_i\widehat{\psi}_i(\widehat{C})$. Summarizing, we have
$$\psi_i\widehat{\psi}_i(\widehat{C})=\min_{\tleq} \{C\in\tp(\AA)\mid C\subseteq \widehat{C}\}.$$ 
Since this expression does not depend on $i$, we are done.
\end{proof}

We will need the following corollary.

\begin{crl}\label{indind} In the setting of Lemma \ref{quadrato}, for $i=1,2$ let $\texl_i$ be the linear extension induced from $\texl$ on $\tp_{B_i}(\AA_i)$, and $\widehat{\texl_i}$ the linear extension of $\tp_{\widehat{B}}(\widehat{A})$ induced from $\texl_i$. Then
$$\textrm{for all }\widehat{C},\widehat{K}\in\tp_{\widehat{B}}(\AA_1\cap\AA_2),\quad\quad\widehat{C}\; \widehat{\texl}_1\; \widehat{K}\;\; \Leftrightarrow\;\; \widehat{C}\; \widehat{\texl}_2\; \widehat{K}$$
\end{crl}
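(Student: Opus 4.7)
\textbf{Proof plan for Corollary \ref{indind}.} The plan is to unwind the two linear extensions $\widehat{\texl}_1$ and $\widehat{\texl}_2$ step by step according to the definition in Notation \ref{psi}, and then invoke the commutativity of the square from Lemma \ref{quadratocomm} to identify them.

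More precisely, I would argue as follows. By the definition of the ``pullback'' linear extension in Notation \ref{psi}, for any $\widehat{C},\widehat{K}\in\tp_{\widehat{B}}(\AA_1\cap\AA_2)$ one has, for $i=1,2$,
\[
\widehat{C}\;\widehat{\texl}_i\;\widehat{K} \;\;\Longleftrightarrow\;\; \widehat{\psi}_i(\widehat{C})\;\texl_i\;\widehat{\psi}_i(\widehat{K}),
\]
and in turn, since $\texl_i$ is itself obtained as the pullback of $\texl$ along $\psi_i$,
\[
\widehat{\psi}_i(\widehat{C})\;\texl_i\;\widehat{\psi}_i(\widehat{K}) \;\;\Longleftrightarrow\;\; \psi_i\bigl(\widehat{\psi}_i(\widehat{C})\bigr)\;\texl\;\psi_i\bigl(\widehat{\psi}_i(\widehat{K})\bigr).
\]
Composing these equivalences shows that whether $\widehat{C}\,\widehat{\texl}_i\,\widehat{K}$ holds depends only on the image of $\widehat{C}$ and $\widehat{K}$ under the composition $\psi_i\circ\widehat{\psi}_i:\tp_{\widehat{B}}(\AA_1\cap\AA_2)\to\tp_B(\AA)$, compared via the fixed linear extension $\texl$.

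The key step is now to observe that Lemma \ref{quadratocomm} gives precisely $\psi_1\circ\widehat{\psi}_1 = \psi_2\circ\widehat{\psi}_2$, so the common value can be written unambiguously, say as $\Psi(\widehat{C})$, and both equivalences reduce to the single condition $\Psi(\widehat{C})\,\texl\,\Psi(\widehat{K})$. Hence $\widehat{\texl}_1$ and $\widehat{\texl}_2$ coincide as total orders on $\tp_{\widehat{B}}(\AA_1\cap\AA_2)$, which is exactly what had to be shown.

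I do not expect a genuine obstacle here: the statement is essentially a formal consequence of the functoriality of the pullback construction defined in Notation \ref{psi} together with the commuting square of Lemma \ref{quadratocomm}. The only care needed is to check that the pullback operation $\texl\mapsto\texl'$ is well-defined on linear extensions (which follows from the injectivity of $\psi$ already noted in Notation \ref{psi}) and is compatible with composition of such $\psi$-maps, after which the claim is immediate.
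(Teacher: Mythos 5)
Your proof is correct and matches the paper's own argument essentially verbatim: both unwind the two pullback steps to reduce the condition to $\psi_i\widehat{\psi}_i(\widehat{C})\,\texl\,\psi_i\widehat{\psi}_i(\widehat{K})$ and then invoke Lemma \ref{quadratocomm} to identify $\psi_1\widehat{\psi}_1$ with $\psi_2\widehat{\psi}_2$. Nothing further to add.
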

\begin{proof} For both $i=1,2$, $\widehat{C}\; \widehat{\texl}_i\; \widehat{K} $ if and only if $\psi_i\widehat{\psi}_i(\widehat{C})\texl \psi_i\widehat{\psi}_i(\widehat{K})$. The claim follows with Lemma \ref{quadratocomm}. 
\end{proof}
 
Now we can define the the object we will study in the next few
statements. Recall that we fixed a linear extension $\texl$ of the
tope poset of $\AA$.

\begin{df}\label{ilungo} %Let $\AA$ be a real arrangement of linear hyperplanes, and $B$ one of its chambers. Suppose further that a linear extension $\texl$ of the poset $\tp_B(\AA)$ is given. Then, 
For every $C\in\tp(\AA)$ we let 
$$\JJ(C):=\{X\in\LL(\AA)\mid \supp(X)\cap S(C,K)\neq \emptyset\textrm{ for every }K\texl C\},$$
which is easily seen to be an upper ideal in $\LL(\AA)$.\end{df}

%In the statements of the following two technical lemmas $\AA$ will denote  a finite arrangement of linear hyperplanes, $B$ a fixed base chamber of $\AA$, and $\texl$ will 

%For the next two technical results let us choose $B\in\tp(\AA)$ and take it as the base chamber for the poset $\tp_B(\AA)$, which partial ordering will be denoted $\tl$. The symbol $\texl$ will denote an arbitrary linear extension of the poset $\tp(\AA)$.

\begin{nt} Let $H\in\AA$  be given and recall the notation \ref{psi}. We write $\JJ'(C')$ for the order ideal of $\LL(\AA')$ associated to $C'$ and $\texl'$ in Definition \ref{ilungo}. The inclusion $\AA'\hookrightarrow \AA$ induces an order preserving injection $$\iota: \LL(\AA')\rightarrow \LL(\AA),\, X\mapsto \bigcap \supp(X).$$ We will identify $\JJ'(C')$ with its image under this map.
\end{nt} 

%We point out that if $H\not\in S(B,C)$, then the inclusion $\iota \JJ'(C') \rightarrow \JJ(C)$ is well-defined. The reason is that in this situation $C=\min_{\texl}\{C\subset C'\}$, and thus  

\begin{lm}\label{perxc} %If $H\not\in\WW_C$ (so that $C=C'$) then, 
Let a chamber $C\in\tp_B(\AA)_{<\hat{1}}$ be given, choose $H\in \AA\setminus S(B,C)$ (such an hyperplane exists because $C\neq -B$) and let $\AA':=\AA\setminus\{H\}$.

For every $Y\in\JJ(C)$ %with $H\in\supp(Y)$ 
we have
$$\bigcap (\supp(Y)\setminus\{H\})\in\JJ'(C').$$
%and thus in particular $\iota\JJ'(C)=\JJ(C)$.
\end{lm}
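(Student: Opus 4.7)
I would argue directly from the definitions. Set $X':=\bigcap(\supp(Y)\setminus\{H\})$; to verify $X'\in\JJ'(C')$ one must produce, for every $K'\texl' C'$, some element in $\supp_{\AA'}(X')\cap S(C',K')$. Since every $H''\in\supp(Y)\setminus\{H\}$ contains the intersection defining $X'$ and hence belongs to $\supp_{\AA'}(X')$, it is enough to exhibit $H^*\in(\supp(Y)\setminus\{H\})\cap S(C',K')$ for each such $K'$.

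The first step is to pull $K'\texl' C'$ back to $\AA$ via the injection $\psi$ of \ref{psi}. Writing $K:=\psi(K')$, the definition of $\texl'$ gives $K\texl\psi(C')$. A key preliminary observation is that $\psi(C')=C$: if $H$ cuts $C'$ into two chambers $C,\tilde C$ of $\AA$, the hypothesis $H\notin S(B,C)$ forces $S(B,\tilde C)=S(B,C)\sqcup\{H\}$, hence $C\tl_B\tilde C$ and so $C\texl\tilde C$; if $H$ does not cut $C'$, then $C'=C$ is already a chamber of $\AA$ and the claim is trivial. Next, for any $H''\in\AA'$ the $H''$-sign of a chamber of $\AA'$ agrees with the $H''$-sign of every chamber of $\AA$ contained in it (since such chambers differ only at $H$); therefore $H''\in S(C',K')$ if and only if $H''\in S(C,K)$, i.e.\ $S(C',K')=S(C,K)\setminus\{H\}$.

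Now $K\texl C$ together with $Y\in\JJ(C)$ yields some $H^*\in\supp(Y)\cap S(C,K)$. If $H^*\neq H$ then $H^*\in(\supp(Y)\setminus\{H\})\cap(S(C,K)\setminus\{H\})=(\supp(Y)\setminus\{H\})\cap S(C',K')$, and we are done. The exceptional case $H^*=H$ is the main obstacle: it forces $H\in S(C,K)$, and combined with $H\notin S(B,C)$ this puts $K$ on the non-$B$-side of $H$. The fix is to replace $K$ by the chamber $\tilde K$ of $\AA$ obtained by flipping the $H$-coordinate: one has $S(B,\tilde K)=S(B,K)\setminus\{H\}\subsetneq S(B,K)$, so $\tilde K\tldot_B K$ and thus $\tilde K\texl K\texl C$ in any linear extension. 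Applying $Y\in\JJ(C)$ to $\tilde K$ produces a hyperplane in $\supp(Y)\cap S(C,\tilde K)=\supp(Y)\cap(S(C,K)\setminus\{H\})$, which is the sought-after element of $(\supp(Y)\setminus\{H\})\cap S(C',K')$.
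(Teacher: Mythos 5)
Your reduction of the problem is correct and clean: you correctly observe that $\supp_{\AA'}(X')=\supp(Y)\setminus\{H\}$, that $\psi(C')=C$, and that $S(C',K')=S(C,\psi(K'))\setminus\{H\}$ for each $K'\texl' C'$, so the claim reduces to ruling out $\supp(Y)\cap S(C,K)=\{H\}$ with $K=\psi(K')$.

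However, the handling of the exceptional case has a genuine gap. You write ``replace $K$ by the chamber $\tilde K$ of $\AA$ obtained by flipping the $H$-coordinate'' and then use $\tilde K\tldot_B K$, which presupposes that the sign vector obtained from $K$ by reversing the $H$-entry is a tope of $\MM_\AA$. This is only guaranteed when $H$ is a wall of $K$, and precisely in the exceptional case this can fail: if $H$ cuts $K'$ then $\psi(K')$ is by definition the $B$-side chamber and $H\notin S(B,K)$, so the exceptional case forces $K'$ \emph{not} to be cut by $H$, i.e.\ $K=K'$ lies entirely on the non-$B$-side of $H$ with no control on whether $H$ supports a facet of $K$. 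A concrete instance: for the three-line arrangement of Figure \ref{arrgt} with $B=(+,+,+)$, $C=(-,+,+)$, $H=H_3$, $K'=(+,-)$, one has $K=\psi(K')=(+,-,-)$ and $H_3\in S(B,K)$, but the flipped vector $(+,-,+)$ is not a chamber, so $\tilde K$ does not exist. (In this example no $Y\in\JJ(C)$ actually realizes the bad intersection, but your argument does not establish that; it relies on a chamber that may not exist.) Ruling out $\supp(Y)\cap S(C,K)=\{H\}$ when $H$ is not a wall of $K$ requires more work, which is exactly why the paper's proof proceeds by induction on $\vert\AA\vert$, isolating an auxiliary hyperplane $\widetilde H\notin\supp(Y)$ and invoking the commutativity of deletions (Lemma \ref{quadratocomm} and Corollary \ref{indind}) to chain the reductions, rather than attempting a direct wall-crossing argument.
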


\begin{proof}[{\bf Proof.}] Fix any $Y\in\LL(\AA)$. %From the first assertion it is easy to deduce surjectivity of $\iota$, thereby proving the last equality. So let us concentrate on the first part of the claim.
As a first step, observe that\begin{center}
{\em ($\star$) If $H\not\in\supp(Y)$, then $Y\in\JJ(C)\Leftrightarrow
  \bigcap(\supp(Y)\setminus\{H\})\in\JJ'(C')$},\end{center} because in
this case $\bigcap(\supp(Y)\setminus \{H\})=Y$, and the conditions for being in $\JJ'(C')$ and $\JJ(C)$ become equivalent. Therefore suppose from now $H\in\supp(Y)$. 

We want to argue by induction on $\vert\AA\vert$. If $\vert\AA\vert=1$ the claim is trivial. So suppose $\vert\AA\vert >1$ and that the claim holds for every smaller arrangement. We need to distinguish two cases:

{\em \underline{Case 1:} $Y=\hat{1}\in\LL(\AA)$.} In this situation
$$\bigcap(\supp(Y)\setminus\{H\})=\hat{1}\in\LL(\AA'). $$
Since both $\JJ(C)$ and $\JJ'(C')$ are nonempty upper ideals, we have $Y\in\JJ(C)$ and $\bigcap(\supp(Y)\setminus\{H\})\in\JJ'(C')$ and the claim holds.

{\em \underline{Case 2:} $Y\neq \hat{1}\in\LL(\AA)$.} Thus we can find
$\widetilde{H}\in\AA\setminus\supp(Y)$. Since $H\in\supp(Y)$, in
particular $\widetilde{H}\neq H$. We need a couple of definitions, in
order to apply Lemma \ref{quadrato}. 

Let $\widetilde{\AA}:=\AA\setminus\{\widetilde{H}\}$,
$\widetilde{\texl}$ the induced linear extension,
$\widetilde{\JJ}(\widetilde{C})$ the corresponding upper ideal (where
$\widetilde{C}$ is the unique chamber containing $C$) and define
$\widetilde{Y}:=\bigcap(\supp(Y)\setminus\{\widetilde{H}\})$. 
Moreover, let $\widetilde{A}':=\widetilde{\AA}\setminus\{H\}$ ($=\widetilde{\AA}\cap\AA'$) and
define $\widetilde{\texl}'$, $\widetilde{C}$ and
$\widetilde{\JJ}'(\widetilde{C}')$ (noting that by Corollary
\ref{indind} it does not matter to specify whether
$\widetilde{\texl}'$ is induced by $\widetilde{\texl}$ or $\texl'$). We have the following implications:\begin{itemize}
\item[(I)] $Y\in\JJ(C)\Rightarrow \widetilde{Y}\in \widetilde{\JJ}(\widetilde{C})$, e.g. by ($\star$).
\item[(II)] $ \widetilde{Y}\in \widetilde{\JJ}(\widetilde{C}) \Rightarrow
\bigcap(\supp(\widetilde{Y})\setminus\{\widetilde{H}\})\in\widetilde{\JJ}'(\widetilde{C}')
$ by the inductive hypothesis, since $H\in S(C,-B)\subseteq
S(\widetilde{C},\widetilde{B})$ and
$\vert\widetilde{\AA}\vert<\vert\AA\vert$ (here $\widetilde{\texl}'$ is viewed
as being induced from $\widetilde{\texl}$).
\item[(III)] $\bigcap(\supp(\widetilde{Y})\setminus\{\widetilde{H}\})\in\widetilde{\JJ}'(\widetilde{C}')\Rightarrow
\bigcap(\supp(Y)\setminus\{H\})\in\JJ'(C')$ again by ($\star$), where
we used Corollary \ref{indind} in switching point of view and considering
$\widetilde{\texl}'$ to be induced from $\texl'$.
\end{itemize}
The lemma follows by chaining up these implications.
 \end{proof}

\newcommand{\B}{\mathcal{B}}
\newcommand{\D}{\mathcal{D}}
\newcommand{\Bu}{\B^{\uparrow}}
\newcommand{\Bd}{\B^{\downarrow}}
\newcommand{\U}{\mathcal{U}}

%\begin{rem}\label{osservazionona} Given any  real linear arrangement $\AA$, choose $B\in\tp(\AA)$ and $H\in\WW_B$. Let $\AA':=\AA\setminus\{H\}$ and for every $C\in\tp(\AA)$ let $C'$ be the unique chamber of $\AA'$ containing $C$. Then $B'':=B'\cap H$ is a chamber of the arrangement $\AA':=\{H'\cap H\mid H'\in\AA'\}$. Let $\B\subset\tp_{B'}(\AA')$ be the subposet of those chambers $R$ of $\AA'$ for which $R\cap H$ is a chamber of $\AA''$, and let $\Bd:=\{C\in\tp_B(\AA)\mid H\not\in S(B,C),\, C'\in\B\}$ the subposet of $\tp_{B''}(\AA'')$ of all chambers `just below the chambers of $\AA''$'. 

%Then {\em the posets $\B$, $\Bu$, $\tp_{B''}(\AA'')$ are naturally isomorphic}. 
%\end{rem}

\begin{thm}\label{propJc}
%For every choice of a base chamber $B\in\tp(\AA)$ and of a linear extension of $\tp_B(\AA)$, and given
For every $C\in\tp_B(\AA)$,  $\JJ(C)\subset\LL(\AA)$ is a principal upper ideal.
\end{thm}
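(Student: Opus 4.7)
The plan is to prove Theorem~\ref{propJc} by induction on $|\AA|$. That $\JJ(C)$ is an upper ideal in $\LL$ is immediate from its defining condition: if $X_0 \in \JJ(C)$ and $X \geq_\LL X_0$, i.e.\ $\supp(X) \supseteq \supp(X_0)$, then the transversal conditions witnessed by $\supp(X_0)$ persist for $\supp(X)$. By Remark~\ref{closedinterval} it therefore suffices to exhibit a minimum element of $\JJ(C)$. The base case $|\AA|\leq 1$ is trivial. For the sub-case $C = -B$, every $K \neq -B$ satisfies $K \texl -B$; choosing $K$ adjacent to $-B$ across a wall $H \in \WW_{-B}$ yields the singleton separation $S(-B,K) = \{H\}$, forcing $H \in \supp(X)$ for each $X \in \JJ(-B)$, while conversely $\WW_{-B} \cap S(-B,K) \neq \emptyset$ for every $K \neq -B$, so containment of $\WW_{-B}$ in $\supp(X)$ is sufficient. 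Hence $\JJ(-B)$ is principal, generated by $\bigcap \WW_{-B}$.

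For $C \neq -B$, I pick $H \in \AA \setminus S(B,C)$ and set $\AA' := \AA \setminus \{H\}$. By the inductive hypothesis $\JJ'(C')$ is principal in $\LL(\AA')$, generated by some $X'_0$, and I claim that $\iota(X'_0)$ generates $\JJ(C)$. One inclusion follows from Lemma~\ref{perxc}: for any $Y \in \JJ(C)$ the flat $\bigcap(\supp(Y)\setminus\{H\}) \in \JJ'(C')$ contains $X'_0$, whence $\supp_{\AA'}(X'_0) \subseteq \supp_\AA(Y)$; taking intersections gives $Y \subseteq X'_0$, and in the sub-case $H \supseteq X'_0$ also $H \supseteq Y$, so altogether $Y \geq_\LL \iota(X'_0)$. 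For the reverse direction, I verify $\iota(X'_0) \in \JJ(C)$: given $K \texl C$, first observe $\psi(C') = C$ (if $H \in \WW_C$ then $H \not\in S(B,C)$ forces $C^H \tg_B C$, so $C \texl C^H$), and hence $\psi(K') \texl C = \psi(C')$, showing $K' \texl' C'$; then applying $X'_0 \in \JJ'(C')$ yields some $H' \in \supp_{\AA'}(X'_0) \cap S'(C',K')$, and since $H' \neq H$ this same hyperplane separates $C$ from $K$ in $\AA$ and belongs to $\supp_\AA(\iota(X'_0))$.

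The most delicate points are (i) the identification $\psi(C') = C$, upon which the rest of the inductive step hinges; (ii) the transfer of the separation condition from $\AA'$ to $\AA$, which is immediate once one knows $H' \neq H$; and most subtly (iii) in the case $H \supseteq X'_0$ the deduction that $\supp_\AA(Y) \supseteq \supp_\AA(\iota(X'_0))$, where $\iota$ strictly enlarges the support to $\supp_{\AA'}(X'_0) \cup \{H\}$—the extra hyperplane $H$ appears in $\supp_\AA(Y)$ only because the containment $Y \subseteq X'_0$ combines with the case hypothesis $X'_0 \subseteq H$ to give $Y \subseteq H$.
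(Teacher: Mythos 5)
Your proof is correct, and it follows the same overall strategy as the paper: induction on $|\AA|$, with Lemma~\ref{perxc} as the engine and the maps $\lambda$, $\iota$ (and the pullback $\texl'$) doing the bookkeeping between $\AA$ and $\AA' = \AA\setminus\{H\}$. The one variation is at the level of formulation. The paper appeals to Remark~\ref{closedinterval} and shows $\JJ(C)$ is closed under meet by an indirect argument: for $Y_1,Y_2\in\JJ(C)$ one has $\lambda(Y_1)\wedge\lambda(Y_2)\in\JJ'(C')$ by induction, whence $\iota(\lambda(Y_1)\wedge\lambda(Y_2))\in\JJ(C)$ lies below $Y_1\wedge Y_2$. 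You instead exhibit the minimum explicitly as $\iota(X'_0)$ with $X'_0=\min\JJ'(C')$, proving both inclusions directly; in particular your argument records the identity $X_C = \iota(X'_{C'})$, which the paper only extracts later (implicitly, via $\lambda(X_C)=X'_{C'}$) in the proof of Corollary~\ref{ciliegina}. This is a slightly more constructive reading of the same inductive step rather than a genuinely different argument. The one small point where you went beyond the paper is the treatment of $C=-B$: the paper simply asserts $\JJ(-B)=\{\hat 1\}$, whereas you identify the generator as $\bigcap\WW_{-B}$ via singleton separations to adjacent chambers; both are consistent, since for an essential linear arrangement $\bigcap\WW_{-B}=\bigcap\AA=\hat 1$.
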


\begin{proof}[{\bf Proof.}]  We will again argue by induction on the size of $\AA$, for if $\AA$ contains only one hyperplane the claim is trivial. So suppose $\vert\AA\vert > 1$, and let the claim hold for every arrangement of size at most $\vert\AA\vert -1$. Choose chambers $B,C\in\tp(\AA)$ and a linear extension $\texl$ of $\tp_B(\AA)$. We will prove that the associated $\JJ(C)$ is closed under the join operation (see Remark \ref{closedinterval}). 

If $C=-B$, then clearly $\JJ(C)=\{\hat{1}\}\subset\LL(\AA)$ and the claim holds. If $C$ is not $-B$, in particular there is $H\in S(C,-B)=\AA\setminus S(B,C)$, and $\AA':=\AA\setminus\{H\}$ satisfies the theorem by induction hypothesis. 

By Lemma \ref{perxc} the (order preserving) map
$$\lambda: \LL(\AA)\rightarrow \LL(\AA'), Y\mapsto \bigcap (\supp(Y)\setminus\{H\}$$  
satisfies $\lambda(\JJ(C))\subseteq \JJ'(C')$. Note that the inclusion $\iota$ of $\JJ'(C')$ into $\JJ(C)$  is well defined because whenever $K\texl C$, then $K'\texl C'$ and $S(C',K')\cap\supp(Y)\subset S(C,K)\cap\supp(Y)$: if the former is nonempty, then so is the latter. 

If we look at the composition of $\lambda$ with $\iota$, we see that $\iota\lambda(Y)\leq Y$ in $\LL(\AA)$ for every $Y\in\JJ(C)$. Now consider two elements $Y_1,Y_2\in\JJ(C)$: by induction hypothesis $\lambda(Y_1)\wedge\lambda(Y_2)$ exists in $\JJ'(C')$. In $\JJ(C)$ we then have an element $\iota(\lambda(Y_1)\wedge\lambda(Y_2))\leq Y_1\wedge Y_2$. Since $\JJ(C)$ is an upper ideal in the lattice $\LL(\AA)$, the proof is complete.
\end{proof}

This theorem ensures the existence of the object that we are going to define. For a {\em construction} of this object one needs some more refined considerations that we will carry out in Section \ref{sect_nbc}.

\begin{df}\label{defX}
Choose, as usual, a base chamber  $B\in\tp(\AA)$, let a linear extension $\texl$ of $\tp_B(\AA)$ be given, and recall Definition \ref{ilungo}.

For every $C\in\tp(\AA)$ define
$$X_C:=\min\JJ(C).$$

\end{df}

From the arguments stated above we can also obtain

\begin{crl}\label{ciliegina} With the assumptions and notations of
  Definition \ref{defX}: $$\textrm{if we define }F_C:=X_C\cap C,\textrm{ we have }|F_C|=X_C.$$
\end{crl}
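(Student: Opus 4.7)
The plan is to verify $|F_C|=X_C$ as flats in $\LL$ (which is ordered by reverse inclusion) by establishing both inequalities. One direction is free: from $F_C=X_C\cap C\subseteq X_C$ every hyperplane containing $X_C$ also contains $F_C$, hence $\supp(X_C)\subseteq\supp(F_C)$, giving $|F_C|=\bigcap\supp(F_C)\subseteq\bigcap\supp(X_C)=X_C$ as subspaces; so $|F_C|\geq X_C$ in $\LL$.

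For the reverse inequality I would argue by induction on $|\AA|$, following the pattern of the proof of Theorem \ref{propJc}. The cases $|\AA|\leq 1$ and $C=-B$ are immediate. Otherwise, pick $H\in\AA\setminus S(B,C)$ and pass to $\AA':=\AA\setminus\{H\}$ with the induced $B', C', \texl'$. The arguments of Theorem \ref{propJc} yield $X_C=\iota(X'_{C'})$ and $H\notin\supp(X_C)$: indeed $\iota(X'_{C'})\in\JJ(C)$ forces $X_C\leq\iota(X'_{C'})$ in $\LL$, and applying $\lambda$ to $X_C$ together with $\lambda(X_C)\in\JJ'(C')$ from Lemma \ref{perxc} forces $\lambda(X_C)=X'_{C'}$; if $H$ were in $\supp(X_C)$, then $\iota\lambda(X_C)$ would be a strictly smaller element of $\JJ(C)$, contradicting minimality. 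The inductive hypothesis then gives $|F'_{C'}|_{\AA'}=X'_{C'}=X_C$, so the set $F'_{C'}=X_C\cap C'$ affinely spans $X_C$. In the ``generic'' subcase where $H$ is not a wall of $C$, one has $C=C'$ and thus $F_C=F'_{C'}$; any $H''\in\AA$ containing $F_C$ must contain its affine span $X_C$, while $H''=H$ is excluded by $H\notin\supp(X_C)$, so $\supp_{\AA}(F_C)=\supp_{\AA}(X_C)$ and $|F_C|=X_C$.

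The main obstacle is the case in which $H$ is necessarily a wall of $C$: then $C'=C\cup C^*$ where $C^*$ is adjacent to $C$ across $H$, and $F'_{C'}=F_C\cup(X_C\cap C^*)$, so only the union is a priori known to span $X_C$. I plan to show that $F_C$ itself still spans $X_C$ by contradiction: assuming $F_C\subseteq H_0$ for some $H_0\in\AA\setminus\supp(X_C)$ --- a single such $H_0$ can be extracted by the classical fact that a convex subset of a finite union of hyperplanes is contained in one of them --- the idea is to exploit the minimality of $X_C$ (each element of $\supp(X_C)$ must be the unique hitter of some separating set $S(C,K_0)$ for some $K_0\texl C$) via a matroid-exchange argument, producing a flat strictly below $X_C$ in $\JJ(C)$ and contradicting its minimality. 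The hard part is orchestrating this exchange in a way compatible with the inductive reduction along $H$.
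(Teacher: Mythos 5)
Your easy inclusion $|F_C|\leq X_C$ and the reduction to $\AA':=\AA\setminus\{H\}$ via $\lambda$ and $\iota$ are sound, and the identity $X_C=\iota(X'_{C'})$ does follow from $\iota(X'_{C'})\in\JJ(C)$ combined with $\lambda(X_C)\in\JJ'(C')$. But the auxiliary claim $H\notin\supp(X_C)$ is false: $H$ may lie in $\supp(X_C)$ and be \emph{redundant} there, in which case $\iota\lambda(X_C)=X_C$ and no contradiction with minimality arises. In the paper's running example of three concurrent lines, for $C=C_5=(-,-,+)$ one has $X_{C_5}=\{0\}$ and $\supp(X_{C_5})=\{H_1,H_2,H_3\}$, while the only admissible choice is $H=H_3\in\supp(X_{C_5})$. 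Since this claim is not actually needed in your ``generic'' subcase, the flaw there is harmless; the real problem lies elsewhere.

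The genuine gap is the wall case, which you explicitly leave as an unexecuted plan. It cannot be sidestepped: in the same example every $H\in\AA\setminus S(B,C_5)$ is a wall of $C_5$ (indeed $\AA\setminus S(B,C_5)=\{H_3\}$ and $H_3\in\WW_{C_5}$), and this is precisely where the content of the statement sits --- knowing that $X_C\cap C'$ spans $X_C$ says nothing a priori about $X_C\cap C$ when $C=C'\cap H^+$. The sketched exchange-plus-contradiction argument is not carried out, and its engine is doubtful for the same reason as above: minimality of $X_C$ in $\JJ(C)$ does not forbid redundant hyperplanes through $X_C$, so producing ``a flat strictly below $X_C$ in $\JJ(C)$'' from the hypothesis $F_C\subseteq H_0$ is exactly the step that needs a proof and does not yet have one. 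The paper attacks this case head-on rather than avoiding it: it \emph{chooses} $H\in\WW_C\cap S(C,-B)$ (such a wall exists whenever $C\neq -B$), so that $C=C'\cap H^+$, and then computes $\dim(X_C\cap C)$ directly from the inductive statement $\dim(X'_{C'}\cap C')=\dim(X'_{C'})$ through the dichotomy $X_C=X'_{C'}$ versus $X_C=X'_{C'}\cap H$; no exchange argument is required. Until your wall case is actually proved, the proposal does not establish the corollary.
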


\begin{proof}[{\bf Proof.}]  Let $\AA$, $B$ and $\texl$ be given, and
  consider $C\in\tp(\AA)$. We will show that $ \dim(X_C\cap
  C)=\dim(X_C)$ whenever $C\neq -B$ (in the remaining case, there is nothing to show). 

Since the claim is trivial when $\vert\AA\vert =1$, we will proceed by induction, assuming from now that $\vert\AA\vert >1$ and that the claim holds for every arrangement with at most $\vert\AA\vert - 1$ hyperplanes.

 Choose $H\in\WW_C\cap S(C,-B)$ (this can be done whitout loss of generality) and note that then $C$ is the intersection of $C'$ with the (closed) halfspace $H^+$ bounded by $H$ and containing $B$. Thus, $$C=C'\cap H^+.$$ We will write $X_C=\min\JJ(C)$ and  $X'_{C'}:=\min\JJ'(C')$. By induction hypothesis: $$ \dim(X'_{C'}\cap C')=\dim(X'_{C'}).$$

Recall now the maps defined in the proof of Theorem \ref{propJc}. We have $$\lambda(X_C)= X'_{C'}$$
 by injectivity of $\iota$. 

\noindent Therefore, only two cases can happen: either $\bigcap\supp(X_C)=\bigcap\big(\supp(X_C)\setminus\{H\}\big)$, and thus $X_C=X'_{C'}$, or $\bigcap\supp(X_C)\neq\bigcap\big(\supp(X_C)\setminus\{H\}\big)$, which implies $X_C=X'_{C'}\cap H$.

If $X_C=X'_{C'}$, %if $X_C=\bigcap\supp(X_C)=\bigcap\big(\supp(X_C)\setminus\{H\}\big)=X'_{C'}$, 
then in particular $X_C\subset H$ and thus
 $$\dim(C\cap X_C) = \dim(C'\cap H^+\cap X'_{C'}) = \dim(X'_{C'}\cap H^+)=\dim(X_{C}).$$

If on the contrary $X_C=X'_{C'}\cap H$, then 
 $$\dim(C\cap X_C) = \dim(C'\cap H^+ \cap X'_{C'}\cap H)$$$$=\dim(C'\cap X'_{C'}\cap H)=\dim(X'_{C'}\cap H)=\dim(X_C)$$

\end{proof}

\begin{qu} It seems likely that the previous arguments can be carried out also for arrangements of affine hyperplanes, at least if $B$ is assumed to be an {\em unbounded} chamber. Since this is not directly relevant for this work, we leave this as a question.
\end{qu}

We return to the `linear' case. The following lemma states, for later reference, an easy reformulation of the definition of $X_C$.

\begin{lm}\label{tec_lm} By Definitions \ref{ilungo} and \ref{defX}, the flat $X_C$ is uniquely determined by the following properties:\begin{itemize}
\item[(1)] $S(K,C)\cap\supp(X_C)\neq\emptyset$ for all $K\texl C$, and

\item[(2)] For every  $Y\in\LL(\AA)$ such that $Y\not > X_C$ there is a chamber $K\texl C$ such
  that  $S(K,C)\cap\supp(Y)=\emptyset$.
\end{itemize}
\end{lm}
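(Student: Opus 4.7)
The plan is to observe that this lemma is essentially a direct translation of Definitions \ref{ilungo} and \ref{defX} through Theorem \ref{propJc}, so the proof reduces to careful unpacking.

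First, I would verify property (1). By Definition \ref{defX}, $X_C=\min\JJ(C)$, so in particular $X_C\in\JJ(C)$, and being a member of $\JJ(C)$ is by Definition \ref{ilungo} exactly the condition (1).

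Next, for property (2), the essential input is Theorem \ref{propJc}, which ensures that $\JJ(C)$ is a \emph{principal} upper ideal, hence
$$\JJ(C)=\{Y\in\LL(\AA)\mid Y\geq X_C\}.$$
Therefore $Y\not\geq X_C$ is equivalent to $Y\notin\JJ(C)$, which by Definition \ref{ilungo} is exactly the existence of some $K\texl C$ with $S(K,C)\cap\supp(Y)=\emptyset$. This gives (2).

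For uniqueness, suppose a flat $X$ satisfies both (1) and (2) with $X$ in place of $X_C$. From (1), $X$ satisfies the defining property of membership in $\JJ(C)$, so $X\in\JJ(C)$ and hence $X\geq X_C$. Conversely, if one had $X_C\not\geq X$, then applying (2) to $Y:=X_C$ would produce a chamber $K\texl C$ with $S(K,C)\cap\supp(X_C)=\emptyset$, contradicting $X_C\in\JJ(C)$ (which was just (1) for $X_C$). Hence $X_C\geq X$, and combined with $X\geq X_C$ we conclude $X=X_C$.

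There is no real obstacle here beyond matching conventions; the only thing to be careful about is that the relation ``$Y\not > X_C$'' in (2) must be read as ``$Y$ does not lie in the upper ideal generated by $X_C$'' (i.e., $Y\not\geq X_C$), since otherwise taking $Y=X_C$ in (2) would clash with (1) applied to $X_C$ itself.
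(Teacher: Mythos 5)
Your proof is correct and is essentially the paper's (the paper simply writes ``Clear.''): membership in $\JJ(C)$ is exactly condition (1), principality of $\JJ(C)$ from Theorem \ref{propJc} turns ``$Y\notin\JJ(C)$'' into ``$Y\not\geq X_C$'' giving (2), and your two-sided comparison establishes uniqueness. Your caveat about reading ``$\not>$'' as ``$\not\geq$'' is well-founded; indeed the paper itself uses $\not\geq$ when it invokes \ref{tec_lm}.(2) in the proof of Theorem \ref{corresp}.
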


\begin{proof}[{\bf Proof.}]  
Clear.
\end{proof}

The next lemma shows the point of the above definitions: the $X_C$ actually describe in very compact way the strata $N(C)$ of Definition \ref{sal_cplx}.

\begin{lm}\label{contr} Let $\MM$ denote the oriented matroid associated to a real, linear arrangement $\AA$, choose a base region $B\in\tp(\AA)$ and a linear extension $\texl$ of $\tp_B(\AA)$, and recall Definition \ref{sal_cplx}. Then $$N(C)\simeq \F(\MM/X_C).$$\end{lm}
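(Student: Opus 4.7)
The plan is to identify $N(C)$, as a subposet of $\SS_C$, with $\FF(\MM/X_C)$ by using the canonical bijection $\SS_C \cong \FF(\MM)^{op}$ sending $\langle F, C_F\rangle \mapsto F$, and then to characterize the image of $N(C)$ under this bijection as exactly the set of faces of $\MM$ that lie in the flat $X_C$. The key translation will be to recognize the defining condition for $\JJ(C)$ inside the combinatorics of the pairs $\langle F, C_F\rangle$.

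First I would unwind what it means for $\langle F, C_F\rangle\in \SS_C$ to lie in $N(C) = \SS(C)\setminus \SS(C')$: no $\SS_K$ with $K\texl C$ strictly contains it. By a direct coordinate comparison, $\langle F, C_F\rangle \in \SS_K$ if and only if $K_F = C_F$; since both of these topes agree with $F$ on $\supp(F)$, the equality is equivalent to $C$ and $K$ coinciding off $\supp(F)$, i.e., $S(C,K)\subseteq \supp(F)$. Hence $\langle F, C_F\rangle\in N(C)$ iff for every $K\texl C$ one has $S(C,K)\not\subseteq \supp(F)$, equivalently $S(C,K)\cap (E\setminus \supp(F))\neq \emptyset$. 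The identity $\supp(|F|) = E\setminus \supp(F)$ (the hyperplanes containing the affine span of $F$ are exactly those in which $F$ is zero) then turns this into the condition $|F|\in \JJ(C)$.

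By Theorem \ref{propJc} together with Definition \ref{defX}, $\JJ(C)$ is the principal upper ideal $\{X\in \LL : X\geq X_C\}$; thus $|F|\in \JJ(C)$ iff $|F|\geq X_C$, i.e., iff the face $F$ sits inside the flat $X_C$. Under the identification $\SS_C\cong \FF(\MM)^{op}$ the subset $N(C)$ therefore corresponds to $\{F\in \FF(\MM) : |F|\geq X_C\}$, and the latter is canonically the face poset of the arrangement $\AA^{X_C}$ induced on $X_C$, namely $\FF(\MM/X_C)$. Order preservation along $\SS_C\cong \FF(\MM)^{op}$ is immediate from the definition of $<_s$: whenever $F>F'$ in $\FF$, the tope-compatibility $(C_{F'})_F = C_F$ is automatic because $\supp(F')\subseteq \supp(F)$, so both sign vectors agree with $F$ on $\supp(F)$ and with $C$ on its complement.

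The main technical point is the translation between the separation condition $S(C,K)\subseteq \supp(F)$ (which controls when $\langle F, C_F\rangle$ lies in $\SS_K$) and the flat-theoretic condition $|F|\in \JJ(C)$; this rests entirely on the elementary identity $\supp(|F|) = E\setminus \supp(F)$. Once that is set up, Theorem \ref{propJc} supplies $X_C$ as the generator of the principal ideal $\JJ(C)$, and the rest is a direct recognition that faces of $\MM$ contained in $X_C$ form precisely the face poset of the contraction $\MM/X_C$.
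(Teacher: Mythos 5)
Your proof is correct and follows essentially the same route as the paper: reduce membership in $N(C)$ to the separation condition $S(C,K)\not\subseteq\supp(F)$, translate this via $\supp(|F|)=E\setminus\supp(F)$ into $|F|\in\JJ(C)$, and then invoke Theorem \ref{propJc} (equivalently, Lemma \ref{tec_lm}) to identify this with $|F|\geq X_C$, i.e.\ $F\in\FF(\MM/X_C)$. The only difference is that you spell out the complementation identity and the poset isomorphism $\SS_C\cong\FF(\MM)^{op}$ more explicitly, which actually cleans up a small notational slip in the paper's own write-up (where $\supp(F)$ is used where $\supp(|F|)$ is meant).
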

\begin{proof}[{\bf Proof.}]  By definition $N(C)=\{\langle F,C \rangle \in \SS_C\mid C_F\neq K_F \textrm{ for all } K\texl C \}$. Since the order is induced by $\SS_C$, we only have to prove equality of sets.

The right-to-left inclusion is easy. Indeed, if
$F\in\F(\MM/X_C)$, then $S(C_F,K)\cap \supp(F)=S(C,K)\cap \supp(F)$
for all $K$. By Lemma \ref{tec_lm}.(1), for all $K\texl C$ we have $S(C,K)\cap\supp(F)\neq \emptyset$, and thus $C_F\neq K_F$ . For the other direction, suppose $\langle F; C
\rangle \in N(C)\setminus \F(\MM/X_C)$,

so that  $F< F'$ in $\FF^{op}$, hence $|F'|< X_C$. Then by Lemma \ref{tec_lm}.(2) there is
$K\texl C$ with $ S(K,C)\cap \supp(F)=\emptyset$, and thus $K_F=C_F$:
a contradiction.\end{proof}

Now we can apply the preceding work to construct a family of maximum
acyclic matchings of the Salvetti complex. 

\begin{figure}
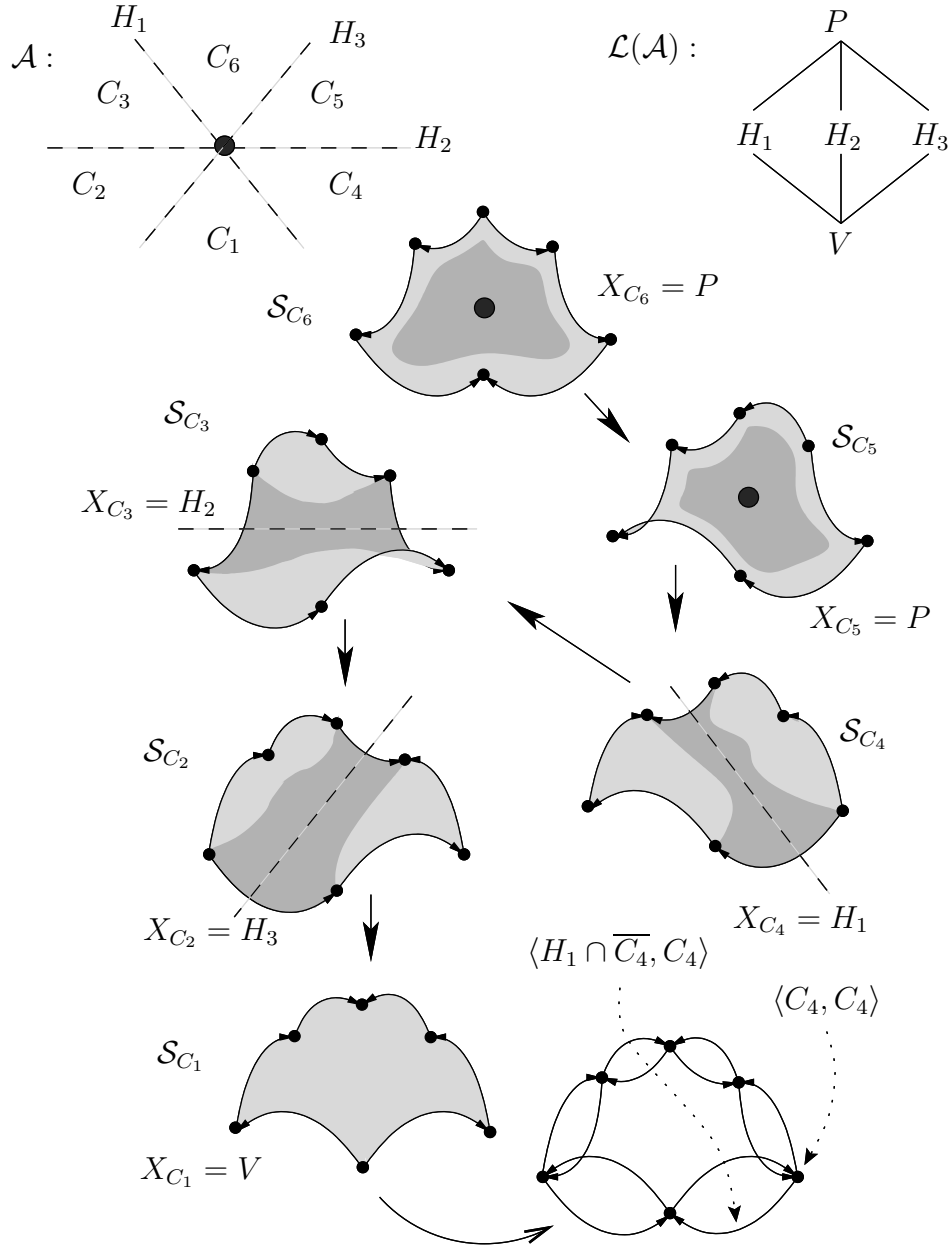

  \begin{picture}(0,0)%
    \includegraphics{strata_scaled.pstex}%
  \end{picture}%
  \input{strata_scaled.pstex_t}%
  
\caption{The Salvetti complex for the arrangement of three lines in the plane, ``assembled'' by attaching the top cells to the $1$-skeleton along the linear extension of the tope poset that was described in Example \ref{exlinex} (see also Figure \ref{covectors} and \ref{arrgt}) The shaded regions represent the `contributions to homotopy' that every top cell gives to the total complex.}\label{figurone}
\end{figure}

\begin{prop}\label{maxmat}
%Every linear extension of a tope poset of a given oriented matroid
%induces a maximum acyclic matching of the associated Salvetti
%complex. The critical cells are in natural correspondence with the
%topes of the oriented matroid.
Let $\AA$ be an arrangement of linear hyperplanes in real space and fix any $B\in\tp(\AA)$. %Given acyclic maximum matchings of
%the posets $\FF^{op}(\MM/X)$, where $X$ ranges over all flats of
%$\MM$, 
To every linear extension of $\tp_B(\AA)$ corresponds a family of acyclic maximum matchings of the associated Salvetti complex $\SS(\MM_\AA)$ which critical cells are in natural bijection with the chambers of $\AA$.
\end{prop}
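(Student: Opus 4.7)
The approach is to exploit the decomposition $\SS(\MM_\AA)=\bigsqcup_{C\in\tp(\AA)}N(C)$ induced by $\texl$ (Definition \ref{sal_cplx}), together with the identification $N(C)\simeq\FF(\MM_\AA/X_C)$ supplied by Lemma \ref{contr}. For every chamber $C\in\tp(\AA)$, I would arbitrarily pick a base tope of $\MM_\AA/X_C$ and any linear extension of its tope poset, and invoke Theorem \ref{linext_acmatch} to obtain an acyclic matching $\M_C$ of $N(C)$ whose unique critical cell corresponds, under the identification of Lemma \ref{contr}, to the tope opposite to the chosen base. Because the strata $N(C)$ are pairwise disjoint, the disjoint union $\M:=\bigsqcup_C\M_C$ is a well-defined matching of $\SS(\MM_\AA)$; varying the local choices of base topes and linear extensions yields the advertised family.

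The main step is verifying that $\M$ is acyclic. My plan is to apply Lemma \ref{ac_linext} by exhibiting a linear extension $\lexl$ of $(\SS(\MM_\AA),\sless)$ obtained by ordering the strata $N(C)$ according to $\texl$ and, within each $N(C)$, by using the shelling-type linear extension $\lexl_C$ coming from the proof of Theorem \ref{linext_acmatch} (cf.\ Lemma \ref{p_i}). For this gluing to define a linear extension of $\sless$, the crucial point is that whenever $\langle F,C\rangle\sless\langle F',C'\rangle$ with $C\neq C'$ one must have $C\texl C'$. This should follow from the defining property of the strata: the relation $\langle F,C\rangle\sless\langle F',C'\rangle$ forces $C=C'_F$, i.e.\ $\langle F,C\rangle\in\SS_{C'}$; if $C'\texl C$ strictly, then $\langle F,C\rangle$ would already lie in $\SS(C^-)$, where $C^-$ denotes the predecessor of $C$ in $\texl$, contradicting $\langle F,C\rangle\in N(C)=\SS(C)\setminus\SS(C^-)$. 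With this compatibility in place, the matched pairs of $\M$ remain consecutive in the global $\lexl$ by construction on each stratum, and Lemma \ref{ac_linext} gives acyclicity.

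It remains to identify the critical cells and argue maximality. By construction $\M$ has exactly one critical cell in each stratum $N(C)$, yielding the claimed natural bijection between critical cells and chambers of $\AA$. To show that $\M$ is maximum, I would invoke Theorem \ref{cwm}: any acyclic matching of the face poset of the Salvetti complex has at least as many critical cells as the total Betti number of the complement $M(\AA_{\mathbb{C}})$, and by the Orlik--Solomon formula (equivalently, Zaslavsky's counting theorem or the no-broken-circuit enumeration of Jewell--Orlik recalled in the introduction) this total Betti number equals $|\tp(\AA)|$. Our matching attains this lower bound, hence is maximum.

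The principal obstacle is the cross-stratum acyclicity check in the second paragraph, which depends on the interaction between the covering relation $\sless$ (encoded by the operation $T\mapsto T_F$) and the linear extension $\texl$ of the tope poset; the rest of the argument simply assembles Lemma \ref{contr}, Theorem \ref{linext_acmatch}, Theorem \ref{cwm} and the classical chamber-count formula.
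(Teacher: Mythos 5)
Your proof takes essentially the same route as the paper: decompose $\SS$ into the strata $N(C)$ via $\texl$, identify each stratum with a face poset by Lemma~\ref{contr}, match each piece using Theorem~\ref{linext_acmatch}, paste, and establish maximality by counting critical cells against nbc-sets. The only point to tighten is the cross-stratum compatibility check, which should be phrased in terms of the stratum label rather than the second coordinate of a pair (a cell $\langle F,T\rangle$ lies in $N(R)$ for the $\texl$-least $R$ with $T=R_F$, not necessarily $R=T$); the cleanest argument is simply that $\SS(C)$ is a lower ideal of $(\SS,\sless)$, so $x\sless y\in N(C)$ forces $x$ into some $N(C')$ with $C'\texleq C$ -- this is in substance what the paper delegates to Kozlov's patchwork theorem.
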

\begin{figure}[b]
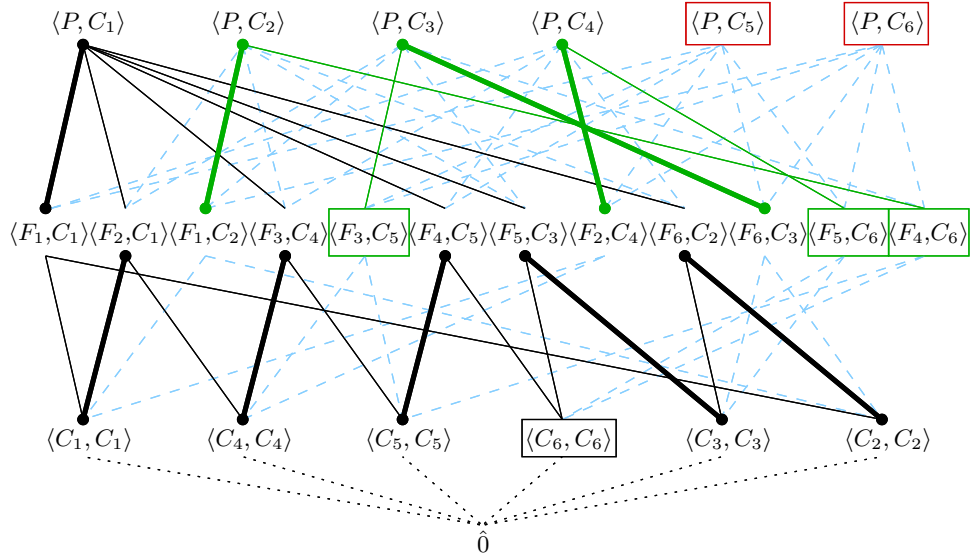

  \begin{picture}(0,0)%
    \includegraphics{sal_c_m_b_scaled.pstex}%
  \end{picture}%
  \input{sal_c_m_b_scaled.pstex_t}%
  
\caption{The poset of cells of the Salvetti complex for the
  arrangement of Figure \ref{arrgt}, where the chambers were numbered according to our chosen linear extension of the tope poset (see Example \ref{exlinex}).
The dashed lines relate elements in different strata; elements of the same stratum are joined by solid lines. The stratum $N(C_1)$ is drawn in black, the strata $N(C_2),N(C_3),N(C_4)$ are drawn in green, while for $i=5,6$ we have $N(C_i)=\langle
  P,C_i\rangle$. The stratification corresponds to the one of Figure
  \ref{figurone}.
 Note that the induced shelling-type ordering of Example \ref{sto} translates into: $C_1 \lexl F_1 \lexl F_2 \lexl C_2 \lexl F_6 \lexl C_3 \lexl F_5 \lexl C_4  \lexl F_4 \lexl C_5 \lexl F_3 \lexl C_6 \lexl P$.
 On each stratum we depict the associated acyclic matching
 by thickening the edges of the matching. The resulting
  critical cells are enclosed into boxes.}\label{salposf}
\end{figure}

\begin{proof}[{\bf Proof.}] 
Let $\texl$ denote a linear extension of the ordering $\tl_B$ of $\tp_B$ and recall Definition \ref{sal_cplx}.

We will prove recursively that every poset $\SS(C)$ possesses a
maximum acyclic matching with as many critical cells as there are
chambers $C' \texl C$.

For $\SS(B)$ this follows from Theorem \ref{linext_acmatch}; so let the claim hold for a
chamber $C\texg B$.  We have to find an acyclic matching of the `new' part $N(C)$.

For any chamber $K$ let
$$
N(C,K):=\SS_C \setminus \SS_K 
=\{\langle F,C\rangle\in\SS_C\mid C_F\neq K_F\}.
$$
Clearly $N(C)=\bigcap_{K\texl C} N(C,K)$, and thus, with every
$N(C,K)$, also $N(C)$ is an upper ideal in $\SS(C)$. Since by Lemma \ref{contr} $N(C)$ is the face poset of an oriented matroid, with Theorem \ref{linext_acmatch} we have an
acyclic matching of $N(C)$. These matchings can be pasted together to
give a matching of the whole $\SS$.   The acyclicity of the
'patchwork-matching' can be shown with Lemma \ref{ac_linext} by
considering the linear extension of $\SS$ given by the concatenation of
the linear extensions of the $N(C)$s so that an element of $N(C_1)$
comes after an element of $N(C_2)$ whenever $C_1\texl C_2$ (for a
precise proof see the more general statement of \cite[Theorem
11.10]{Ko} on `patchwork of acyclic matchings'). 

 By Theorem \ref{linext_acmatch} , the shelling induced on $N(C)$ has only one homology cell, and thus the corresponding acyclic matching has exactly one critical element.
With the `pigeon hole principle' we now see that the
obtained `global' acyclic matchings on $\SS$ are in fact maximum acyclic matchings:
indeed, the number of critical elements and the number of generators
in homology both equal the cardinality of the family of the {\em no
  broken circuit sets} (see e.g. \cite{JO}).

\end{proof}

\begin{rem}\label{grezzo} The matchings of the previous proposition are obtained
  by pasting together acyclic matchings for the different
  $N(C)s$. In principle, any choices of acyclic maximum matchings of the $N(C)$s
  can be pasted together. But since it is easy to see that a
  shelling-type ordering of a locally ranked poset restricts to a
  shelling-type ordering of any of its lower ideals, we can
  construct the whole matching keeping the freedom of choice to a
  minimum: it is possible to give an explicit description of the critical elements of the matching induced on $\SS$ by the choice of a base chamber $B$, of a linear extension $\texl$ of $\tp_B$, and of maximal chains  $\chain_C$ in $[B,-C]$ for all $C\in \tp$:
the critical point added with $N(C)$ is $\langle F(C),C_{F(C)}\rangle$, with $$F(C):=\max{}_{\rkl_{r(C)}}\{F'\in \FF\mid |F'|=X_C \},$$
where %$X_C$ is the intersection of the hyperplanes of $W(C)$, 
$\rkl$  is the shelling-type ordering induced on $\FF^{op}$ and $r(C)$ is the rank (i.e., the codimension) of $X_C$.
\end{rem}

\section{No broken circuits and critical elements}\label{sect_nbc}

In this last section we want to relate our construction to
no-broken-circuit sets. It is not easy to track back the origin of these widely studied combinatorial objects that can be
defined for every geometric lattice; let us here mention just \cite{Bry,Bjnbc} as `early references'. We only
recall that they give a basis for the Whitney homology of the
associated geometric lattice (see \cite{Bacl,Bjnbc}) and, in the context of arrangements of
hyperplanes, the no-broken-circuit sets of size $k$ index a
basis of the $k$-th degree of the Orlik-Solomon algebra (see e.g. \cite{OS,JL} and the textbook \cite{OT}), which is
known to be isomorphic to the (integral) cohomology algebra of the
arrangement's complement \cite{OS}. For a
comprehensive and very readable account of these objects, and for more bibliography, see the
survey of Yuzvinsky \cite{YuzSurv}.

We will continue our `geometric' treatment of the subject and, as
above, leave to the interested reader the translation into the
language (and the strength) of abstract oriented matroids.

\begin{df}(no-broken-circuit sets)\label{defnbc}  Translating the classical definition for matroids, a {\em circuit} of $\AA$ is a minimal set $\CC$ of hyperplanes such that every $H\in
\CC$ contains the intersection of the other elements of $\CC$. In
particular, for every $H\in \CC$ the set $\CC\setminus\{H\}$ is
minimal with the property that the intersection of its hyperplanes
equals $\bigcap \CC$. If a linear ordering of the set of hyperplanes is given, a {\em broken circuit} is a subset $B\subset \AA$ that can be written as $\CC\setminus \{H\}$, where $H$ is the minimal element of $\CC$ in the chosen total order.

A {\em no-broken-circuit set}, also called simply {\em nbc set}, is an
independent subset of $\AA$ that contains no broken circuit, or the
empty set. It is clear that the nbc sets give a simplicial
complex, denoted $\nbc(\AA)$, on the ground set $\AA$. Note that we
formally consider also the simplex of dimension $-1$ given by the
empty set - thus, $\emptyset \in \nbc(\AA)$ for all $\AA$. 
\end{df}

\begin{ex} For the arrangement $\AA$ of three lines in the plane, with the
  lattice depicted on the top right of Figure \ref{figurone}, we have only
  one circuit, namely $\{H_1,H_2,H_3\}$, and thus we get
$$\nbc(\AA)=\big\{\emptyset,\{H_1\},\{H_2\},\{H_3\},\{H_1,H_2\},\{H_1,H_3\}\big\}.$$
\end{ex}

A corresponding notion exists for arbitrary geometric lattices (i.e., for arbitrary matroids): the interested reader is referred to \cite{Bjnbc}. %One of the main tools for working with nbc-sets is the so-called {\em deletion-restriction} property.

It is important to point out that, for technical reasons, our definitions differ from those of \cite{JO} in that our broken circuits fail to contain a {\em minimal} (instead of a maximal) element. The other definitions are then adapted to this change.

Before to state the main definitions, let us fix some notation that will accompany us through the remainder of this paper.

\begin{nt}\label{not2} We keep the conventions of the Important Remark \ref{ir} but now, in addition, we suppose a linear ordering $\{H_1,\ldots ,H_n\}$ to be given on the set of hyperplanes. For the moment no special requirements are made on this ordering.

We will write
$$\AA_j:=\{H_1,\ldots,H_j\}\textrm{ for }1\leq j\leq n,\quad\quad\AA':=\AA_{n-1},\quad\quad \AA'':=\AA^{H_n},$$
where $\AA^{H_n}=\{H\cap H_n\mid H\in\AA'\}$, according to the Notation \ref{not1}. 
Clearly every $\AA_j$ inherits the ordering from $\AA$. Moreover, there is a canonical ordering of $\AA^{H_n}$ obtained by numbering every element $L\in\AA''$ according to the `smallest' hyperplane $H(L)\in\AA$ in which it is contained.
As above, every $C\in\tp(\AA)$ is contained in exactly one chamber of $\AA'$, that we will denote by $C'$. Thus, $B'$ is the only chamber of $\AA'$ that contains the base chamber $B$ of $\AA$.

For every $H\in\AA$ let $H^+$ denote the closed halfspace
that is bounded by $H$ and contains $B$. Clearly
$B=\bigcap_{H\in\AA}H^+$ and $B'=\bigcap_{H\in\AA'}H^+$. More generally, there
is a canonical choice of a base region $B_j$ for $\AA_j$: we define
$B_j:=\bigcap_{i\leq j} H^+_i$. Turning our attention to $\AA''$, for
$L\in\AA''$ it is natural to define $L^+:=H_n\cap H(L)^+$. 
Now,  {\em if $H_n$ is a wall of $B$} write $B'':=\bigcap_{L\in\AA''}L^+$. 
\end{nt}

The last requirement on $H_n$ is necessary to ensure that the
intersection defining $B''$ has indeed maximal dimension inside $H_n$. It is clear that with this
hypothesis $$B''=B'\cap H_n.$$ We will
need this property to hold inductively: this is the motivation of the
following definition.

\begin{df}[Cut property]\label{cut}  A total ordering $\{H_1,\ldots,H_n\}$ of $\AA$
  satisfies the {\em cut property} with respect to the base chamber
  $B$ if, for every $j=2,\ldots,n$, $H_j$
  intersects the interior of $B_{j-1}$ (we will say: $H_n$ {\em cuts} $B_{j-1}$).
\end{df}

We need to check that an ordering with this property
exists. The next Lemma explains that those orderings correspond to known
objects. Namely: maximal chains in the poset of regions.

\begin{lm} An ordering $\{H_1,\ldots,H_n\}$ of the hyperplanes of an
  arrangement $\AA$ satisfies the cut property if and only if there is
  a maximal chain $$B=C_0\tl C_1\tl\ldots \tl C_n=-B$$ in $\tp_B(\AA)$
  such that $S(C_{i-1},C_i)=\{H_i\}$ for all $1\leq i \leq n$.
\end{lm}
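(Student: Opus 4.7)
The argument should proceed by induction on $n = |\AA|$; the base case $n=1$ is immediate, since the one-step chain $B \tl -B$ trivially realises $S(B,-B)=\{H_1\}$ and the cut condition at $j\ge 2$ is vacuous.

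For the direction ``chain $\Rightarrow$ cut property'', I would fix $j\in\{2,\dots,n\}$ and exploit the fact that $|S(C_{j-1},C_j)|=1$ forces $C_{j-1}$ and $C_j$ to be adjacent chambers of $\AA$ sharing a common codimension-one facet contained in $H_j$ (this is a standard property of simple linear arrangements: any point in the relative interior of the facet is a boundary point of both chambers, and the segment between their interiors crosses only $H_j$). Pick a relative-interior point $q$ of that common facet. Then $q$ avoids every hyperplane of $\AA\setminus\{H_j\}$, and in particular every hyperplane of $\AA_{j-1}$, so it sits in the interior of the $\AA_{j-1}$-chamber containing $C_{j-1}$. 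Because $S(B,C_{j-1})=\{H_1,\dots,H_{j-1}\}$, that $\AA_{j-1}$-chamber is exactly $-B_{j-1}$, so $H_j$ meets the interior of $-B_{j-1}$. The arrangement being central, the involution $x\mapsto -x$ fixes each $H_i$ and sends $B_{j-1}$ to $-B_{j-1}$, so $H_j$ also meets the interior of $B_{j-1}$, which is the cut property at index $j$.

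For the converse, I would restrict the ordering to $\AA':=\AA_{n-1}$ with base chamber $B':=B_{n-1}$. Because the chambers $B_{j-1}$ for $j\le n-1$ are unchanged by removing $H_n$, the cut property descends verbatim to $\AA'$, and the inductive hypothesis supplies a chain $B' = D_0\tl D_1\tl\cdots\tl D_{n-1}=-B'$ in $\tp_{B'}(\AA')$ with $S(D_{i-1},D_i)=\{H_i\}$. The cut property at $j=n$ asserts that $H_n$ meets the interior of $B'$, so $H_n$ splits $B'$ into two $\AA$-chambers, one of which is $B = B'\cap H_n^+$; by centrality $H_n$ also splits $-B'$, with $-B$ as one of the halves. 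I would then lift the $\AA'$-chain by letting $C_i$ be the $\AA$-chamber obtained by intersecting $D_i$ with $H_n^+$ for $i=0,\dots,n-1$, and setting $C_n:=-B$, so that the extra crossing of $H_n$ happens at the very last step: by construction $S(C_{n-1},C_n)=\{H_n\}$, and for $i<n$ the adjacency $S(C_{i-1},C_i)=\{H_i\}$ should be inherited from the corresponding adjacency $D_{i-1}\tl D_i$ in $\AA'$.

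The genuinely delicate step will be to justify that this lift is well defined: each $D_i$ with $i<n$ must meet $H_n^+$ in a region of nonempty interior so that $D_i\cap H_n^+$ really is an $\AA$-chamber, and the putative $C_{i-1},C_i$ must indeed remain adjacent across $H_i$. The raw cut property only controls how $H_n$ interacts with the base chamber $B_{n-1}=B'$, not with each intermediate $D_i$, so making the lift go through will require combining the cut property with centrality of $\AA$ and with the inductive combinatorial structure of the $\AA'$-chain, propagating the ``local'' cut condition along the gallery. This propagation step is where I expect the argument to work hardest.
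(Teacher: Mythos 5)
Your forward direction (chain $\Rightarrow$ cut property) is correct, and is in fact a direct argument that needs no induction: a relative--interior point $q$ of the facet shared by $C_{j-1}$ and $C_j$ lies on $H_j$, on no other hyperplane, and in the interior of the $\AA_{j-1}$-chamber $-B_{j-1}$ containing $C_{j-1}$, and antipodal symmetry transports $q\in H_j\cap\mathrm{int}(-B_{j-1})$ to $-q\in H_j\cap\mathrm{int}(B_{j-1})$. This is also the only implication the paper actually uses (to conclude that cut-property orderings exist, because maximal chains from $B$ to $-B$ always exist), and is presumably all that the paper's one-word ``Clear'' is meant to cover.

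The difficulty you flagged in the converse, however, is not a gap to be propagated away: it is fatal, because the converse is false as stated. There is no flexibility in your choice of $\AA'$-chain --- a chain crossing $H_1,\ldots,H_{n-1}$ in exactly that order is sign-determined ($D_i$ must be opposite to $B'$ on $H_1,\ldots,H_i$ and agree with $B'$ elsewhere), hence unique if it exists --- so if some intermediate $D_i$ lies entirely in $H_n^-$ the lift has no repair. This does happen. In $\mathbb{R}^3$ take $H_1=\ker(x)$, $H_2=\ker(y)$, $H_3=\ker(z)$, $H_4=\ker(x+y-z)$, and let $B$ be the chamber with sign vector $(+,+,+,+)$, i.e.\ the one containing $(1,1,1)$. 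Then $B_1=\{x\ge 0\}$, $B_2=\{x\ge 0,y\ge 0\}$, $B_3=\{x\ge 0,y\ge 0,z\ge 0\}$ and the ordering $H_1,\ldots,H_4$ satisfies the cut property: $H_2$ cuts $B_1$ at $(1,0,0)$, $H_3$ cuts $B_2$ at $(1,1,0)$, $H_4$ cuts $B_3$ at $(1,1,2)$. Yet the sign vector $(-,-,+,+)$ that $C_2$ would have to carry is not realized, since $x<0$, $y<0$, $z>0$ force $x+y-z<0$; equivalently, $D_2=\{x\le 0,y\le 0,z\ge 0\}$ is contained in $H_4^-$ and $D_2\cap H_4^+$ has empty interior. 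Note that the cut property only sees $B_1,\ldots,B_{n-1}$, so it cannot distinguish this $B$ from the other chamber $(+,+,+,-)$ of the positive octant, for which the required chain \emph{does} exist; that insensitivity is exactly why the biconditional fails. In short: your forward argument is a complete proof, your converse cannot be completed because the statement it targets is false, and the equivalence in the Lemma should be demoted to the one implication that is both clear and used.
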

\begin{proof}[{\bf Proof.}]  Clear.
\end{proof}

We see that every arrangement can be ordered so to
satisfy the cut property (for example, the ordering of the hyperplanes in figure \ref{arrgt} satisfies the cut property). Indeed,  Definition \ref{cut} turns out to describe the property we were seeking for.

\begin{rem}If the ordering $\AA=\{H_1,\ldots,H_n\}$ satisfies the
cut property with respect to the chamber $B$, then for every $j=1,\ldots,n$ there is a canonical
choice of a base region in $(\AA_j)''$: $$B_j'':= H_j\cap B_{j-1}.$$
Moreover, the induced ordering of $(\AA_j)''$ satisfies the cut
property with respect to $B_j''$.
\end{rem}

%\begin{rem} From now on, we will definitely switch to the geometric point of view and state eveything in the language of arrangements. All results hold also in the setting of oriented matroids, but the required formal language would excessively obscure the ideas of the proofs. Therefore, we prefer to leave to the interested reader the straightforward `translation' of the arguments to the abstract  oriented matroid setting.
%\end{rem}

\begin{df} Let $\AA:=\{H_1,\ldots, H_n\}$ be ordered such that $H_n\in\WW_B$. With the Notations of \ref{not2} we define: $$\tp:=\tp_B(\AA),\quad\quad\tp':=\tp_{B'}(\AA'),\quad\quad \tp'':=\tp_{B''}(\AA'').$$

 Moreover, let $\B'$ (or $\B'(\AA)$ if specification is needed) denote the set of all chambers of
$\AA'$ that are `cut' by $H_n$. Every $C'\in\B'$ contains therefore two
chambers $C^{\downarrow}\tldot_B C^{\uparrow}$ of $\tp$. Define
$$\Bu:=\{C^{\uparrow}\mid C\in \B\},\quad\Bd:=\{C^{\downarrow}\mid
C\in\B\},$$ %$$\D:=\{C\in\tp_B\setminus\B\mid C\tl R \textrm{ for an }R\in\Bd\},$$
$$\U:=\tp'\setminus \B',\quad\quad \B'':=\{H_n\cap C\mid C\in\B'(\AA)\}.
$$ 
\end{df}

\begin{rem}\label{bijections} Clearly,
$$\tp=\U\uplus\Bu\uplus\Bd,\quad\quad \tp'=\U\uplus\B',\quad\quad\tp''=\B'',$$
with the evident order preserving bijections:
$$\beta': \B'\rightarrow \Bd,\quad\quad\beta'':\Bu\rightarrow\B''.$$
\end{rem}

 We want to describe a particular linear extension of $\tp$ that allows us to explicitly index the critical elements of the associated acyclic matchings with the no broken circuit sets of the arrangement. We will make use of an indexing of the chambers of $\AA$ by {\em nbc} sets that is inspired by a result of Jewell and Orlik \cite{JO}.

\begin{df}[see Section 3.4 of \cite{JO}]\label{eta} %Let $\AA$ be
                                %an arrangement of linear hyperplanes
                                %in a real vector space. Choose a chamber $B\in\tp(\AA)$ and for every
  %$H\in\AA$ let $H^+$ denote the open halfspace bounded by $H$ and
  %containing $B$. 
Consider an ordering $\AA=\{H_1,\ldots, H_n\}$ that
  satisfies the cut property with respect to the chamber $B$  and keep
  the notations introduced above. We  define a map
  $$\eta:\tp_B(\AA)\rightarrow \mathcal{P}(\AA)$$ recursively in the
  number of elements of $\AA$ as
  follows:

 $\bullet$ If $\AA=\{H_1\}$, let $\eta_1(H_1^+):=\emptyset$
  and $\eta_1(-H_1^+):=\{H_1\}$.

 $\bullet$ Let $\AA=\{H_1,\ldots,H_n\}$ with $n>1$ and
suppose we are able to define such functions for every arrangement of
cardinality at most $n-1$. In particular the functions $\eta'$
and $\eta''$ associated to $\AA'$, $\AA''$ are defined. Then, for $C\in\tp(\AA)$ we define 
$$\eta(C):=\left\{\begin{array}{ll}
\eta'(C) &\textrm{ if } C\in\U\cup\Bd\\
%\eta''(\beta''(C)\cup\{H_n\}
\big\{\min\{H\in\AA \mid H\cap H_n= L\}\,\big\vert\, L\in\eta''(\beta''(C))\big\}
&\textrm{ if } C\in\Bu
\end{array}\right.  $$
\noindent where we slightly abused notation in implicitly identifying
$\tp'$ 
with $\U\cup\Bd$ using the bijection $\beta'$ of Definition \ref{bijections}.
\end{df}

In particular, for $C\in\B'(\AA)$ we have $\eta(C^\downarrow)=\eta'(C)$
and a natural bijective correspondence between $\eta(C^\uparrow)$ and
$\eta''(C\cap H_n)\cup\{H_n\}$. The map $\eta$ was
introduced in \cite{JO} as a bijection between no-broken circuit sets
and chambers of the arrangement, as we state in the following lemma.

\begin{lm}[see Lemma 3.14 of \cite{JO}]\label{jobij} %For every real linear arrangement
  %and every choice of $B$ and of a maximal chain in $\tp_B(\AA)$ we
  %have that %\begin{itemize}
%\item[(1)] 
The map $\eta$ is a bijection $\tp(\AA)\rightarrow\nbc(\AA)$ with
%\item[(2)] 
 $\eta(B)=\emptyset$.
%\end{itemize}
\end{lm}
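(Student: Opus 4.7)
\emph{Proof plan.} The plan is to induct on $n = |\AA|$. The base case $n = 1$ is immediate from the definition: $\nbc(\AA) = \{\emptyset, \{H_1\}\}$ while $\eta(H_1^+) = \emptyset = \eta(B)$ and $\eta(-H_1^+) = \{H_1\}$. For the inductive step I would assume the lemma for both $\AA'$ and $\AA''$, so that $\eta'$ and $\eta''$ are already bijections with $\eta'(B') = \eta''(B'') = \emptyset$.

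The strategy is to combine two deletion--restriction decompositions. On the chamber side, Remark~\ref{bijections} supplies $\tp(\AA) = \U \uplus \Bd \uplus \Bu$ together with order-preserving identifications $\U \uplus \Bd \cong \tp(\AA')$ and $\Bu \cong \tp(\AA'')$. On the nbc side, the goal is the analogous decomposition
\[
\nbc(\AA) = \nbc(\AA') \;\uplus\; \bigl\{\{H_n\} \cup \{H(L) : L \in T\} : T \in \nbc(\AA'')\bigr\},
\]
where $H(L) := \min\{H \in \AA : H \cap H_n = L\}$ is the canonical lift appearing in Definition~\ref{eta}. Granted this decomposition, the bijectivity of $\eta$ reduces case by case to the inductive hypothesis: on $\U \uplus \Bd$ the map $\eta$ coincides with $\eta'$, while on $\Bu$ it factors as $\beta''$ followed by $\eta''$ followed by the lift $T \mapsto \{H_n\} \cup \{H(L) : L \in T\}$. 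The two pieces do not interfere because $H_n \in \eta(C)$ precisely when $C \in \Bu$. For the base point, since $H_n$ is a wall of $B$ the hyperplane $H_n$ cuts $B'$, so $B' \in \B'$ and $B$ lies on the $H_n^+$ side; hence $B$ is the lower chamber of the pair in $B'$, i.e.\ $B \in \Bd$, and therefore $\eta(B) = \eta'(B') = \emptyset$ by induction.

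The technical heart is the nbc decomposition. The inclusion $\nbc(\AA') \subseteq \nbc(\AA)$ is easy: any broken circuit $\CC \setminus \{\min \CC\}$ contained in $\AA'$ forces $\min \CC \in \AA'$, since $H_n$ is the maximum element of $\AA$ and therefore never the minimum of a circuit; consequently $\CC$ is already a circuit of $\AA'$. The more delicate half is the bijection between $\nbc(\AA'')$ and the nbc sets of $\AA$ containing $H_n$. The restriction map $H \mapsto H \cap H_n$ from $\AA'$ to $\AA''$ realizes the matroid contraction $\MM/H_n$, so circuits of $\AA$ through $H_n$ correspond bijectively (by deleting $H_n$ and restricting) to circuits of $\AA''$. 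Under this correspondence one verifies that $\{H_n\} \cup S \in \nbc(\AA)$ exactly when $S = \{H(L) : L \in T\}$ for some $T \in \nbc(\AA'')$: the lex-minimality built into $H(L)$ ensures that no broken circuit of $\AA$ lying entirely in $\AA'$ sits inside $S$, while the nbc condition on $T$ controls the circuits that pass through $H_n$.

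I expect this last verification---the compatibility between the lex-minimal choice $H(L)$, the canonical ordering on $\AA''$, and the ``minimum of the circuit'' rule defining broken circuits---to be the main obstacle, as it requires careful bookkeeping when passing between $\AA$, $\AA'$ and the contraction $\AA''$. Once it is in hand, bijectivity of $\eta$ and the identity $\eta(B)=\emptyset$ follow purely formally from the two parallel deletion--restriction decompositions.
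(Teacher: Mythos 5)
The paper itself gives no proof of this lemma: it is stated as a citation of \cite[Lemma~3.14]{JO}, and no argument follows it in the text. So there is no internal proof to compare your attempt against; what you have written is, in effect, an independent reconstruction of the Jewell--Orlik argument. Your double deletion--restriction induction is the natural route, and the chamber-side decomposition you invoke is exactly Remark~\ref{bijections}, so the skeleton is right. The base case and the identification of $B$ as an element of $\Bd$ (via the cut property, which forces $H_n$ to cut $B'=B_{n-1}$) are both correct.

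Two points keep this from being a complete proof. First, your argument that ``the two pieces do not interfere because $H_n\in\eta(C)$ precisely when $C\in\Bu$'' silently corrects Definition~\ref{eta}: as written there, the formula for $C\in\Bu$ is $\{\min\{H\in\AA\mid H\cap H_n=L\}\mid L\in\eta''(\beta''(C))\}$, which does \emph{not} contain $H_n$ (no $L\in\AA''$ has $H_n\cap H_n=L$). The sentence immediately after the definition, about a ``bijective correspondence between $\eta(C^\uparrow)$ and $\eta''(C\cap H_n)\cup\{H_n\}$,'' shows the intent is that $H_n$ be included, so you are reading the definition as it was meant --- but you should say explicitly that you are using $\eta(C)=\{H_n\}\cup\{\min\{H\mid H\cap H_n=L\}\mid L\in\eta''(\beta''(C))\}$ for $C\in\Bu$, since otherwise injectivity genuinely fails (sets from $\Bu$ and from $\U\cup\Bd$ could coincide). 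Second, the heart of the argument --- that $\{H_n\}\cup S\in\nbc(\AA)$ with $S\subset\AA'$ if and only if $S=\{H(L):L\in T\}$ for some $T\in\nbc(\AA'')$, with the ordering on $\AA''$ inherited via the lex-minimal lifts $H(L)$ --- is asserted, not proved. You flag it yourself as ``the main obstacle,'' and indeed it is: this is the content of the nbc deletion--restriction theorem, and one must check (a)~that the parallel-class circuits $\{H_i,H_j,H_n\}$ with $H_i\cap H_n=H_j\cap H_n$ exclude all non-minimal representatives and forbid two lifts of the same $L$, and (b)~that circuits of $\AA$ through $H_n$ restrict to circuits of $\AA''$ in a minimum-compatible way. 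Until that verification is written out, the proposal is a correct plan rather than a proof.
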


\begin{figure}[h]
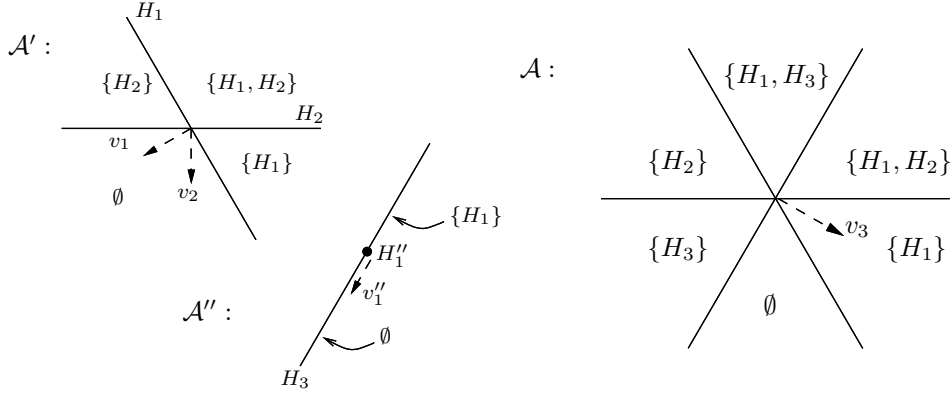

  \begin{picture}(0,0)%
    \includegraphics{nbcbj_scale_scaled.pstex}%
  \end{picture}%
  \input{nbcbj_scale_scaled.pstex_t}%
  
\caption{The last step in the inductive construction of $\eta$ for the
arrangement given on the left of Figure \ref{arrgt}, where we see that $\Bu=\{C_3,C_5\}$, $\Bd=\{C_2,C_4\}$, $\U=\{C_1,C_6\}$. For every chamber
$C$, the set $\eta(C)$ is written inside $C$ to show the bijective correspondence.}\label{nbcbi}
\end{figure}

\newcommand{\hB}{\widehat{B}}
\newcommand{\hAA}{\widehat{\AA}}
\newcommand{\hphi}{\widehat{\phi}}
\newcommand{\heta}{\widehat{\eta}}
\newcommand{\uC}{C^\uparrow}\newcommand{\dC}{C^\downarrow}
\newcommand{\uK}{K^\uparrow}\newcommand{\dK}{K^\downarrow}

 As a first step let us prove a technical property that derives from
 our particular choice of the ordering of the hyperplanes.

\begin{lm}\label{inters_nbc} Let $\AA=\{H_1,\ldots ,H_n\}$ be an arrangement of linear
  real hyperplanes and $B$ a chamber of $\AA$. Suppose that the
  ordering of the hyperplanes satisfies the cut property with respect
  to $B$. 
%and recall the
%  construction of $\phi$ from Definition \ref{bijections}. 
Then
  $$\bigcap\eta'(C)\cap H_n =\bigcap\eta''(C\cap H_n)\quad\quad\forall C\in\B'(\AA).$$
\end{lm}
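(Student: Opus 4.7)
The lemma lends itself to induction on $n=|\AA|$. The base case $n=2$ is a direct check: if $\AA=\{H_1,H_2\}$ with $H_2=H_n$ cutting $H_1^+$, then $\B'(\AA)=\{H_1^+,-H_1^+\}$ and one verifies $\eta'(C)\cap H_n=\eta''(C\cap H_n)$ in each of the two cases ($\emptyset$ giving $H_n=H_n$, and $\{H_1\}$ giving $H_1\cap H_n$ on both sides).

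For the inductive step, fix $C\in\B'(\AA)$. The cut property of $\AA$ with respect to $B$ passes to $\AA'$ with respect to $B'=B_{n-1}$ (so $H_{n-1}$ is a wall of $B'$) and, as noted in the paper, to $\AA''$ with respect to $B''$. This lets us unpack $\eta'(C)$ one level deeper via the recursive definition: writing $\AA^{(3)}:=\AA'\setminus\{H_{n-1}\}=\AA_{n-2}$ and $\AA^{(4)}:=(\AA')^{H_{n-1}}$, with associated maps $\eta^{(3)},\eta^{(4)}$,
\[
\eta'(C)=\begin{cases}\eta^{(3)}(C), & C\in\U_{\AA'}\cup\B^{\downarrow}_{\AA'},\\ \{\min\{H\in\AA':H\cap H_{n-1}=L\}:L\in\eta^{(4)}(C\cap H_{n-1})\},& C\in\B^{\uparrow}_{\AA'}.\end{cases}
\]
A parallel expansion governs $\eta''(C\cap H_n)$ after stripping the last hyperplane $L_m$ of $\AA''$.

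The argument then divides according to the position of $C$ relative to $H_{n-1}$. In cases $C\in\U_{\AA'}$ and $C\in\B^{\downarrow}_{\AA'}$ one has $\eta'(C)=\eta^{(3)}(C)$, and the induction hypothesis, applied to the arrangement $\AA_{n-2}\cup\{H_n\}$ of size $n-1$ (which inherits a cut-property ordering when $H_n$ is placed last), yields the equality of flats directly, since stripping $H_{n-1}$ does not change the right-hand side in these cases. The decisive case is $C\in\B^{\uparrow}_{\AA'}$: here both $\eta'(C)$ and the nbc set $\eta''(C\cap H_n)$ arise, up to min-representative adjustment, from nbc sets living inside the codimension-two flat $H_{n-1}\cap H_n$, obtained respectively from $\AA^{(4)}$ and from $(\AA'')^{L_m}$. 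Applying the induction hypothesis inside $\AA^{(4)}$, viewed as an arrangement in $H_{n-1}$, one shows that both sides coincide with $\bigcap\eta^{(4)}(C\cap H_{n-1})\cap H_n$.

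\textbf{Main obstacle.} The technical heart of the argument is case (c). The minimum representatives chosen in $\AA'$ to form $\eta'(C)$ need not coincide with those chosen in $\AA$ when unraveling $\eta''(C\cap H_n)$, so the two sets of hyperplanes need not be equal; one must verify that they nevertheless span the same flat inside $H_{n-1}\cap H_n$. This reduces to the observation that the intersection of a family of hyperplanes depends only on the flat they span, not on which representative is selected, combined with careful bookkeeping to ensure the cut property is preserved through the two nested restrictions required by the induction.
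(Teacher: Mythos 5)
Your outline matches the paper's at the top level (induction on $n$, case analysis according to the position of $C$ relative to $H_{n-1}$), but the decisive case is not actually carried out, and where you do commit to a plan it does not work as stated. Two concrete issues.

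First, in the case $C\in\B^{\uparrow}_{\AA'}$ you propose to apply the inductive hypothesis to $\AA^{(4)}=(\AA')^{H_{n-1}}$. But $\AA'$ does not contain $H_n$, so $\AA^{(4)}$ does not contain the line $H_n\cap H_{n-1}$; the lemma, applied to $\AA^{(4)}$ with its own last hyperplane, says nothing about intersecting with $H_n$. The paper instead applies the inductive hypothesis to three auxiliary arrangements simultaneously — $\widehat{\AA}:=\AA\setminus\{H_{n-1}\}$, $\AA'=\AA\setminus\{H_n\}$, and $\AA''=\AA^{H_n}$ — and exploits the tautological identifications ($\mu=\eta'$, $\widehat{\eta}'=\mu'$, $\widehat{\eta}''=\nu'$, $\nu=\eta''$ in the paper's notation) to chain the resulting equalities. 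Your sketch ``both sides coincide with $\bigcap\eta^{(4)}(C\cap H_{n-1})\cap H_n$'' is the right target for the left-hand side, but you give no mechanism to reach it from the right-hand side.

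Second, the ``careful bookkeeping'' you defer is precisely where the paper does genuine work: one must distinguish whether $L:=H_{n-1}\cap H_n$ is a \emph{new} flat in $\AA''$, i.e.\ whether there exists $j<n-1$ with $H_j\supset L$. If such a $j$ exists, then $\widehat{\AA}^{H_n}=\AA''$ and $\eta''(C\cap H_n)=\widehat{\eta}''(C\cap H_n)$ holds for free; if not, the definition of $\eta''$ unfolds one more level and an additional application of the inductive hypothesis to $\AA''$ is needed. Your proposal does not acknowledge this dichotomy at all, and it also affects the first two cases: the assertion that ``stripping $H_{n-1}$ does not change the right-hand side'' tacitly assumes $L$ is (or is not) the last hyperplane of $\AA''$, which is exactly the distinction being elided. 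In short, the difficulty is not merely that min-representatives may differ while spanning the same flat (which you correctly note is harmless), but that the recursive structure of $\eta''$ itself branches on whether $L$ is new, and your sketch does not engage with that branching.
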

\begin{proof}[{\bf Proof.}]  Again, we argue recursively on the number of hyperplanes of $\AA$. If $\AA=\{H_1\}$ there is nothing to prove. So let $\AA=\{H_1,\ldots,H_n\}$ with $n>1$ and suppose that the ordering satisfies the cut property with respect to the chamber $B$. Let $\hAA:=\AA\setminus\{H_{n-1}\}$. Clearly the induced ordering on $\hAA$ satisfies the cut property with respect to $\hB:=\bigcap_{j\neq n-1}H_j^+$ and thus, by induction, the claim holds and ensures 
$$\bigcap\heta'(C)\cap H_n=\bigcap \heta''(C\cap H_n) \quad\quad \forall C\in\B'(\hAA).$$
Also, the induction hypothesis applies to the arrangement $\AA''$ with
respect to the induced order and the chamber $B''=B\cap H_n$;
thus, if we define $L:=H_n\cap H_{n-1}$,  {\em when there is no $j<n-1$ with $H_J\supset L$}
we can write $$\bigcap\nu'(C)\cap L=\bigcap\nu''(C\cap L)\quad\quad\forall
C\in\B'(\AA''),$$
\noindent where $\nu,\nu',\nu'' $ are the maps obtained by applying
Definition \ref{eta} to $\AA''$. 
Finally, let us denote by $\mu,\mu',\mu''$ the maps associated to
$\AA'=\{H_1,\ldots , H_{n-1}\}$. We know that the order induced on
$\AA'$ satisfies the cut property with respect to the unique chamber
$B'\supset B$ and thus, by induction,
$$\bigcap\mu'(C)\cap H_{n-1} =\bigcap\mu''(C\cap H_{n-1})\quad\quad
\forall C\in\B'(\AA'). $$

We would like to point out the following (tautological) relations:
$$ \mu=\eta',\quad \heta'=\mu', \quad \heta''=\nu',\quad
\nu=\eta''. $$

Now we proceed with the proof. Let $\AA$ be as above, and choose
$C\in\B'(\AA)$.
It is easy to see that if $C\subset H_{n-1}^+$ or if
$H_{n-1}$ is not a wall of $C$, then the claim holds because it holds
for $\hAA$.

So suppose that $H_{n-1}$ is a wall of $C$ and that $C\not\subset
H_{n-1}^+$. Then we have
$$\eta'(C)=\mu(C)=\{H_{n-1}\} \cup \mu''(C\cap H_{n-1})$$

and 
$$\eta''(C\cap H_n)=\left\{\begin{array}{ll}
\heta''(C\cap H_n) &\textrm{if there is }j<n-1\textrm{ with } L\subset
H_j,\\
\{L\}\cup\nu''((C\cap H_n) \cap H_{n-1})&\textrm{else.}
\end{array}\right.$$

Moreover, we can write
$$\begin{array}{rl}
\bigcap\eta'(C)\cap H_n & =  \bigcap \big[\{H_{n-1}\}\cup\mu''(C\cap
H_{n-1})\big]\cap H_n\\
 &= \bigcap \mu'(C)\cap H_{n-1} \cap H_n = \bigcap \heta'(C)\cap H_{n}\cap H_{n-1}\\
 &= \bigcap\heta''(C\cap H_n) \cap H_{n-1}.
\end{array}$$

Since we know that $H_{n-1} \in \eta'(C)$, this implies
$\bigcap\eta'(C)\cap H_n =\bigcap \heta''(C\cap H_n)$. To conclude the
proof we distinguish two cases:

\noindent {\em Case 1.} If there is $j<n-1$ with $L\subset H_j$,  the
claim follows immediately, because then
$\heta''(C\cap H_n)=\eta''(C\cap H_n)$.
%$L\in \heta''(C\cap
%H_n)$ and thus $\bigcap\heta''(C\cap H_n) \cap
%H_{n-1}=\bigcap\heta''(C\cap H_n)$, whence the claim follows. 

\noindent {\em Case 2.} If there is no such $j$, then the induction hypothesis
applies to $\nu$ and gives
$$\bigcap\heta''(C\cap H_n) \cap H_{n-1}=\bigcap\nu'(C\cap H_n)\cap H_{n-1}  =\bigcap\nu''(C\cap H_n\cap
H_{n-1})=\eta''(C), $$
where the last inequality holds because every element of
$\nu''(C\cap H_n\cap H_{n-1})$ is contained in $L$.

Thus, in any case the claim holds.
\end{proof}

Now the idea is to consider a linear extension that behaves well under `taking $\AA'$ and $\AA''$'.

\begin{df}\label{lex} For every $H\in \AA$ let $H^+$ denote the open halfspace
  that is bounded by $H$ and contains the base chamber $B$. To every
  $C\in \tp$ we associate an array $\sigma(C):=(\sigma_1(C),\ldots,
  \sigma_n(C))$ by setting $\sigma_i(C)=0$ if $C\subset H^+_i$, and $\sigma_i(C)=1$ else.

We denote by $\tlexl$ (or $\tlexl_{\AA,B}$ when specification is needed) the total order on $\tp$ induced by the lexicographic ordering of the corresponding arrays.
\end{df}

\begin{ex} The linear extension of example \ref{exlinex} translates into 
$$(0,0,0)\texl (0,0,1) \texl (0,1,1)\texl (1,0,0)\texl (1,1,0)\texl (1,1,1) $$ and is therefore $\tlexl_{\AA}$ for the arrantement $\AA$ of Figure \ref{arrgt}.
\end{ex}

\begin{rem} In the language of oriented matroids the above definition just fixes the acyclic orientation associated with the tope $B$ and then associates to every tope its signed covector.
\end{rem}

\begin{lm}\label{lessicogr} The ordering $\tlexl_{\AA,B}$ is a linear extension of $\tp_B(\AA)$, and satisfies:\begin{itemize}
%\item[(1)] the elements of $\D\cup\Bd$ come first.
\item[(1)] the ordering of $\tp'$ induced via the maps $\delta$, $\beta'$, $\gamma$ is $\tlexl_{\AA',B'}$.
\item[(2)] the ordering of $\tp''$ induced via the map $\beta''$ is $\tlexl_{\AA'', B''}$.
\end{itemize}
\end{lm}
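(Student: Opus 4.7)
The plan is to work directly with the sign-vector description of $\sigma$. First observe that $\sigma_i(C)=1$ precisely when $H_i$ separates $C$ from $B$, so $\sigma(C)$ is the characteristic vector of $S(B,C)\subseteq\AA$. For claim~(1), if $T_1\tl_B T_2$, i.e.\ $S(B,T_1)\subsetneq S(B,T_2)$, let $j$ be the smallest index where $\sigma(T_1)$ and $\sigma(T_2)$ disagree. The nested strict inclusion rules out $\sigma_j(T_1)=1$ with $\sigma_j(T_2)=0$, so necessarily $\sigma_j(T_1)=0$ and $\sigma_j(T_2)=1$, giving $\sigma(T_1)\tlexl\sigma(T_2)$. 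Since distinct chambers have distinct sign vectors, $\sigma$ is injective and $\tlexl_{\AA,B}$ is a well-defined total order extending $\tl_B$.

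For claim~(2), consider the natural injection $\iota\colon\tp'\hookrightarrow\tp$ that is the identity on $\U$ (an element of $\U$, not being cut by $H_n$, is still a chamber of $\AA$) and agrees with $(\beta')^{-1}\colon \B'\to\Bd$, $C\mapsto C^\downarrow$, on $\B'$; this is exactly the identification implicit in the statement. For any $C\in\tp'$, the first $n-1$ entries of $\sigma(\iota(C))$ agree with $\sigma'(C)$: for $i<n$, $H_i$ separates $\iota(C)$ from $B$ in $\AA$ iff it separates $C$ from $B'$ in $\AA'$, because $\iota(C)\subseteq C$ and $B\subseteq B'$ lie on fixed sides of $H_i$. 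Since $\sigma'$ is injective on $\tp'$, two distinct $C_1,C_2\in\tp'$ have $\sigma'(C_1)\neq\sigma'(C_2)$; the first coordinate where $\sigma(\iota(C_1))$ and $\sigma(\iota(C_2))$ differ therefore lies among the first $n-1$, so the lexicographic comparison in $\{0,1\}^n$ collapses onto the lexicographic comparison in $\{0,1\}^{n-1}$, identifying the induced order with $\tlexl_{\AA',B'}$.

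Claim~(3) is the cleanest. For $C^\uparrow\in\Bu$ one has $\sigma_n(C^\uparrow)=1$, since $H_n\in S(B,C^\uparrow)\setminus S(B,C^\downarrow)$. For $i<n$, the $i$-th coordinate of $\sigma(C^\uparrow)$ coincides with the $i$-th coordinate of $\sigma''(\beta''(C^\uparrow))$: indeed $\beta''(C^\uparrow)=C\cap H_n\subseteq C^\uparrow$, and the positive halfspace of $H_n\cap H_i$ inside $H_n$ (with respect to $B''=B'\cap H_n$, which is a chamber of $\AA''$ by virtue of the cut property) equals $H_n\cap H_i^+$, so both sign entries record whether $C^\uparrow\subseteq H_i^+$. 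Because the last coordinate is constantly $1$ on $\Bu$, the lex order on $\tp$ restricted to $\Bu$ transfers under $\beta''$ to the lex order on $\tp''$, i.e.\ to $\tlexl_{\AA'',B''}$.

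The only real obstacle throughout is careful bookkeeping: one must verify that the base chambers $B,B',B''$ and the positive halfspaces behave compatibly under deletion and restriction of hyperplanes, which is precisely what the cut property guarantees. With this in hand, each of the three claims follows by a direct inspection of the sign vectors, with no further combinatorial input required.
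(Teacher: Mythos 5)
Your proof is correct and follows the same approach as the paper: working directly with the $\{0,1\}$ sign vectors $\sigma(C)$ recording $S(B,C)$ and comparing them lexicographically. The paper dispatches all three parts in three sentences (``Item (1) is easy to see,'' etc.), whereas you fill in the details the paper glosses over—in particular the key observation for item (2) that whenever $H_i\cap H_n = H_j\cap H_n = L$ the halfspaces $H_i^+\cap H_n$ and $H_j^+\cap H_n$ coincide (both contain $B''$, which is full-dimensional in $H_n$ by the cut property), so the repeated entries of $\sigma|_{\Bu}$ are genuinely redundant and dropping them to get $\sigma''$ preserves the lexicographic order. One small slip: the map $C\mapsto C^\downarrow$ on $\B'$ is $\beta'$ itself, not $(\beta')^{-1}$.
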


\begin{proof}[{\bf Proof.}] 
We have to show that if $C\tl_B C'$, then $C\tlexl C'$. But the former means $S(B,C)\subset S(B,C')$: thus, $\sigma(C')$ is obtained from $\sigma(C)$ by switching from $0$ to $1$ the entries corresponding to the elements of $S(C,C')$, and $\tlexl$ is therefore a linear extension. 
Item (1) is easy to see. For (2), recall that every hyperplane of $\AA''$ corresponds to a codimension $2$ subspace of $\AA$ and gets the number of the smallest $i<n$ such that $H_i$ contains the subspace. 
\end{proof}

The next step will be to prove that the critical cells of the acyclic
matching of Proposition \ref{maxmat} are completely determined by the
associated chamber, provided that the chosen linear extension is the
one associated via Definition \ref{defX} to an ordering of the
hyperplanes that satisfies the cut property.

 We will
show that, for every base chamber $B$ and every ordering of $\AA$
satisfying the cut property with respect to $B$, $\eta(C)$ is a basis
of the flat $X_C$ if the chosen linear extension of $\tp_B(\AA)$ is
the one of Definition \ref{lex}.

\begin{thm}\label{corresp}
%Let $\AA$ be a real arrangement of linear hyperplanes and $B$ any
%chamber of $\AA$. Then, order the hyperplanes of $\AA$ so that the cut
%property with respect to $B$ is satisfied. 
Let the ordering $\{H_1,\ldots,H_n\}$ of $\AA$ satisfy the cut property with respect to the chamber $B$ and consider the linear extension
$\tlexl$ of $\tp_B$. We have $$X_C=\bigcap\eta(C).$$ 
%where $X_C$ and $\eta(C)$ are defined in Lemma \ref{...} and Theorem \ref{...}, respectively. 
\end{thm}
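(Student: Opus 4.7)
I would argue by induction on $n=|\AA|$. The base case $n=1$ is immediate. For the inductive step, given $\AA=\{H_1,\dots,H_n\}$ satisfying the cut property with respect to $B$, I would distinguish cases according to the decomposition $\tp_B(\AA)=\U\sqcup\Bu\sqcup\Bd$, and within $\U$ according to the side of $H_n$ on which $C$ sits.

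\emph{Cases $C\in\Bd$ and $C\in\U\cap H_n^+$}: both have $H_n\notin S(B,C)$, so Lemma \ref{perxc} applies with $\AA'=\AA\setminus\{H_n\}$ and yields $\lambda(Y)\in\JJ'(C')$ for every $Y\in\JJ(C)$, with $\lambda(Y)\supseteq Y$ as subspaces. Combined with the direct verification that $X'_{C'}$, viewed through $\LL(\AA')\hookrightarrow\LL(\AA)$, belongs to $\JJ(C)$ (using the identity $S(K,C)\cap\AA'=S(K',C')$ together with Lemma \ref{lessicogr}(1) to pass from $K\tlexl C$ to $K'\tlexl' C'$), this forces $X_C=X'_{C'}$ as subspaces, and the definition of $\eta$ on $\U\cup\Bd$ together with the inductive hypothesis on $\AA'$ completes the identification $X_C=\bigcap\eta(C)$.

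\emph{Case $C\in\Bu$}: writing $C=C'^{\uparrow}$ with $C'\in\B'$, the $\tlexl$-predecessor $C'^{\downarrow}$ of $C$ satisfies $S(C'^{\downarrow},C)=\{H_n\}$, forcing $H_n\in\supp(X_C)$ and hence $X_C\subset H_n$. Thus $X_C$ is a flat of the contraction $\AA''=\AA^{H_n}$, and a parallel argument using Lemma \ref{lessicogr}(2) and the bijection $\beta''$ identifies $X_C$ as a subspace with $X''_{\beta''(C)}$. Reading the definition of $\eta$ on $\Bu$ so as to include $H_n$ (as the cardinality remark after Definition \ref{eta} requires), the identity $H(L)\cap H_n=L$ together with Lemma \ref{inters_nbc} yields
\[
\textstyle\bigcap\eta(C)\;=\;H_n\cap\bigcap\{H(L):L\in\eta''(\beta''(C))\}\;=\;\bigcap\eta''(\beta''(C)),
\]
which by the inductive hypothesis on $\AA''$ equals $X''_{\beta''(C)}$.

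\emph{Delicate case $C\in\U\cap -H_n^+$}: here $H_n\in S(B,C)$ so Lemma \ref{perxc} is unavailable for $H_n$, and I would instead verify the two properties of Lemma \ref{tec_lm} for $X'_C$ directly in $\AA$. Property (1) is as in the easy case. For property (2), given $Y\not\subset X'_C$, I pass to the $\AA'$-flat closure $Y^{\sharp}=\bigcap\supp_{\AA'}(Y)$, apply Lemma \ref{tec_lm}(2) in $\AA'$ and the inductive hypothesis to obtain $K'\tlexl' C'$ with $S(K',C')\cap\supp_{\AA'}(Y^{\sharp})=\emptyset$, and select $K\subseteq K'$ with $K\tlexl C$ on the same side of $H_n$ as $C$. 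This succeeds when $K'\in\B'$ (take $K=K'^{\uparrow}$) or $K'\in\U\cap -H_n^+$ (take $K=K'$): then $S(K,C)=S(K',C')$ and $\supp_{\AA'}(Y^{\sharp})=\supp_{\AA'}(Y)$ force $S(K,C)\cap\supp_{\AA}(Y)=\emptyset$ as required. The main obstacle is the residual subcase in which every eligible $K'$ lies in $\U\cap H_n^+$ and $Y\subset H_n$: then the only available $K=K'$ satisfies $H_n\in S(K,C)\cap\supp_{\AA}(Y)$, and a finer argument is needed, most plausibly by exploiting the interplay between $\AA'$ and $\AA''$ furnished by the cut property and Lemma \ref{inters_nbc} to exclude this configuration.
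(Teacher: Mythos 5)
Your outline for $C\in\Bu$ essentially reproduces the paper's argument: observe $H_n\in\supp(X_C)$ using the $\tlexl$-predecessor $C^\downarrow$, contract along $H_n$ via $\beta''$, invoke Lemma \ref{lessicogr}.(2) to transport the linear extension, use Lemma \ref{inters_nbc} to relate $\eta$, $\eta'$ and $\eta''$, and apply the inductive hypothesis in $\AA''$. That part is sound and matches the paper. Your separate treatment of $\Bd$ and $\U\cap H_n^+$ via Lemma \ref{perxc} is a reasonable variant; the paper instead disposes of all $C\notin\Bu$ at once with the observation that $\bigcup_{K\tlexl C}K=\bigcup_{K'\tlexl' C'}K'$ as subsets of $\mathbb{R}^d$ (the paper's phrasing ``if $H_n\in S(B,C)$'' there must be read as ``if $C\notin\Bu$'', since the next sentence begins ``So let $C\in\Bu$'').

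The gap you flag in the residual subcase $C\in\U\cap -H_n^+$, $H_n\in\supp(Z)$, is genuine, and you have identified exactly the right point of friction: from the inductive hypothesis in $\AA'$ one only obtains a chamber $K'\tlexl' C'$ with $K'\subset\bigcap_{H\in\supp_{\AA'}(Z^\sharp)}H^{\epsilon_{C'}(H)}$, and if every such $K'$ lies in $\U\cap H_n^+$ then the lifted $K$ inevitably has $H_n\in S(K,C)\cap\supp(Z)$. The paper's ``union of chambers'' reduction does not automatically close this either: it shows that $Y_C$ satisfies condition (1) of Lemma \ref{tec_lm} in $\AA$ given condition (1) in $\AA'$ (since $R_\AA(Y_C)\subseteq R_{\AA'}(X'_{C'})$), but for condition (2) the inclusion goes the wrong way, precisely because $R_\AA(Z)=R_{\AA'}(Z^\sharp)\cap(-H_n^+)$ is strictly smaller than $R_{\AA'}(Z^\sharp)$ when $Z\subset H_n$. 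So this subcase does require an additional argument beyond what you wrote and beyond what the paper spells out; your guess that one must bring in the cut property and the $\AA'/\AA''$ interplay is the right instinct, but as it stands the proposal is incomplete here. Your proof plan therefore does not yet establish the theorem, even though the reduction strategy is correct for the cases you treat fully.
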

   
\begin{proof}[{\bf Proof.}] 
Again, the claim is trivial if $¦\AA¦=1$. So let $n:=\vert \AA\vert >1$
and suppose that the claim holds for every arrangement of at most
$n-1$ hyperplanes (and thus, in particular, for $\AA'$ and $\AA''$).

Given $C\in \tp(\AA)$, let $$Y_C:=\bigcap\eta(C).$$

We are going to prove that $Y_C$ satisfies \ref{tec_lm}.(1) and \ref{tec_lm}.(2).

%\noindent $\bullet$ First let $C\not \in \Bu$: in this case it will be practical to
%consider $X_C$ as in Definition \ref{..}.
%We will prove that $Y_C$ equals the intersection of a minimal set of walls of $C$ separating
%$C$ from every $K\tlexl C$.

It is easily seen that this is true if $H_n\in S(B,C)$, because
the above properties hold for $\AA'$ and depend only on the position
of the flat with respect to the union of the chambers $K$ that come
before $C$. In fact, the chosen linear extension is such
that the union of all $K\tlexl C$ equals (as a subset of $\mathbb{R}^d$) the union of
the chambers that come before $C'$  with respect to the ordering
$\tlexl_{\AA',B'}$ (recall that $C'$ is the unique chamber of $\AA'$ containing
$C$).

%\noindent $\bullet$ 
So let $C\in\Bu$ and recall that by definition we have
$$\eta(C)=\{H_n\}\cup \big\{\min\{H\in\AA \mid H\cap H_n= L\}\,\big\vert\,
L\in\eta''(C\cap H_n)\big\}.$$

We now have to check the properties of Definition \ref{tec_lm}.

\noindent {\em \underline{\ref{tec_lm}.(1):}  $\,\supp(Y_C)\cap S(C,K)\neq \emptyset$ for all
  $K\tlexl C$.}\\ This assertion is clear if $H_n\in S(B,K)$, since then
  $H_n\in S(C,K)\cap \supp(Y_C)$. On the other hand, if $H_n\not\in S(B,K)
  \U$, then we know that $S(C,K)\cap\supp(\bigcap\eta'(C'))\neq \emptyset$ by
  induction hypothesis. But Lemma \ref{inters_nbc} allows us to write
$$Y_C=\bigcap\eta(C)=\bigcap\eta''(C\cap H_n)= \bigcap\eta'(C)\cap
H_n, $$
whence $\supp(Y_C)\supseteq \supp(\bigcap\eta'(C))$, and the claim
follows.\\
\noindent {\em \underline{\ref{tec_lm}.(2):} For every flat $Z\not\geq Y_C$
   in $\LL(\AA)$ there
  is a chamber  $K\tlexl C$ such that $\supp(Z)\cap S(C,K)=\emptyset$.}\\ Clearly if
  $H_n\not \in\supp(Z)$, we are easily done by taking
  $K=(C')^\downarrow$ so that $S(C,K)=\{H_n\}$. We are left with the
  case where
  $H_n\in\supp(Z)$. Then $Z\not\geq \bigcap\eta''(C'')$ in
  $\LL(\AA'')$ - recall Lemma \ref{inters_nbc} and that $C'':=C'\cap H_n$ - and by induction
  hypothesis we know that there is $K''\tlexl_{\AA'',B''}C''$ with no
  hyperplane of $\AA''$ containing $Z$ and separating $K''$ from $C''$. Now let $K$ be the chamber of $\AA$ that is `just above' (or: the
  preimage with respect to $\beta''^{-1}$ of) $K''$ (so that $K\tlexl
  C$ by Lemma \ref{lessicogr}). For every $H\in S(C,K)$,
  $H\cap H_n$ separates $C''$ from $K''$ in $\AA''$. Thus, if there
  were $H\in\supp(Z)\cap S(C,K)$, then there would be $L:=H\cap
  H_n\in\supp''(Z)$ separating $C''$ from $K''$ (where $\supp''(Z)$ is
  naturally defined as $\{L\in\AA''\mid Z\subset L\}$) - a contradiction. 
\end{proof}

We can now summarize our results leaving the greatest generality in
the attempt to approach the greatest {\em naturality}. The proof is an
easy combination of Proposition \ref{maxmat}, Theorem \ref{corresp}, Remark \ref{grezzo} and Corollary \ref{ciliegina}.

\begin{prop}\label{result} Let $\AA$ denote a real arrangement of linear
  hyperplanes and choose a chamber $B\in\tp(\AA)$. Every ordering of
  $\AA$ that satisfies the cut property with respect to $B$ gives rise to a bijection $\eta$ between chambers and nbc-sets as
in Definition \ref{eta} and to 
%and every
%  choice of a maximal face $$T(C)\subset \bigcap\eta(C) $$ for all
%  $C\in\tp(\AA)$ determines  a sequence of elementary collapses on the Salvetti complex of
%  $\AA$ such that the cells that are not collapsed are those of the form
an acyclic matching of the Salvetti complex which critical cells are precisely those of the form
$$\big\langle \,\bigcap\eta(C)\cap C,\; C \,\big\rangle.$$

In particular, the resulting CW-complex has one cell of dimension
$\vert \eta(C)\vert$ for every  $C\in\tp(\AA)$.  
\end{prop}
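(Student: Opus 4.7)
The plan is to string together the four cited results. First, given an ordering of $\AA$ satisfying the cut property, I would invoke Lemma \ref{lessicogr} to see that the lexicographic linear extension $\tlexl_{\AA,B}$ of Definition \ref{lex} is a valid linear extension of $\tp_B(\AA)$. Feeding this linear extension into Proposition \ref{maxmat} produces an acyclic maximum matching of $\SS(\MM_\AA)$ whose critical cells are in natural bijection with the chambers of $\AA$: one critical cell $\kappa(C)$ is contributed by each stratum $N(C)$.

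Next, I would describe $\kappa(C)$ explicitly. By Remark \ref{grezzo}, the critical cell contributed by $N(C)$ has the form $\langle F(C), C_{F(C)}\rangle$, where $F(C)$ is a face with $|F(C)|=X_C$ selected by the shelling-type ordering restricted to $N(C)\simeq \FF(\MM/X_C)$ (Lemma \ref{contr}). Corollary \ref{ciliegina} furnishes a canonical candidate, namely the face $F_C := X_C\cap C$, which indeed satisfies $|F_C|=X_C$ and lies below $C$ in $\FF$; I would argue that the shelling-type ordering on $N(C)$ singles out precisely this face (see below for the main obstacle). Assuming this identification, $C_{F_C}=C$ because $F_C\leq C$, so $\kappa(C)=\langle F_C, C\rangle$.

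The final step is to substitute $X_C=\bigcap\eta(C)$ from Theorem \ref{corresp}, which gives $F_C = \bigcap\eta(C)\cap C$ and hence $\kappa(C)=\langle\,\bigcap\eta(C)\cap C,\; C\,\rangle$, as claimed. For the dimension count, recall that a cell $\langle F,T\rangle$ of the Salvetti complex has dimension equal to $\codim(|F|)$; so $\dim\kappa(C)=\codim(X_C)$. Since by Definition \ref{defnbc} an nbc set is independent in the matroid sense, $|\eta(C)|$ equals the rank of $X_C$, i.e.\ $\codim(X_C)$, yielding exactly one cell of dimension $|\eta(C)|$ per chamber $C$.

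The main obstacle is the identification of $F(C)$ with $F_C=X_C\cap C$. Through the isomorphism $N(C)\simeq \FF(\MM/X_C)$ of Lemma \ref{contr}, the critical element selected by Theorem \ref{linext_acmatch} is the tope of $\MM/X_C$ opposite to the base tope induced on $\MM/X_C$ by $B$ and $\tlexl$. One must therefore trace through the construction of Proposition \ref{maxmat}: the shelling-type ordering on $N(C)$ is obtained by restricting $\tlexl$ to those chambers containing $C$'s projection to $X_C$, and since $F_C$ is precisely the face of $\FF(\MM)$ lying in $X_C\cap C$, it corresponds under the isomorphism to the required antipodal tope in $\MM/X_C$. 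Once this compatibility is verified, the rest of the proof is the mechanical chaining of references described above.
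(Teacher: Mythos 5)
Your proposal is the paper's own proof: the published argument for Proposition \ref{result} is the single sentence that it is ``an easy combination of Proposition \ref{maxmat}, Theorem \ref{corresp}, Remark \ref{grezzo} and Corollary \ref{ciliegina},'' and you chain together exactly these ingredients (with Lemma \ref{lessicogr} correctly supplying the required linear extension $\tlexl$). The ``main obstacle'' you flag is a step the paper indeed leaves tacit, and your instinct to worry about it is sound; a cleaner way to close it than tracing the contraction isomorphism and locating an antipodal tope is simply to note that, by Corollary \ref{ciliegina} and simplicity of the oriented matroid, $F_C=X_C\cap C$ is the \emph{unique} face of $\FF(\MM)$ below $C$ with $|F_C|=X_C$ (its covector is forced: zero on $\supp(X_C)$ and equal to $C$ on the complement), so once Lemma \ref{contr} and the patching of Proposition \ref{maxmat} guarantee that the critical cell contributed by $N(C)$ has the form $\langle F,C\rangle$ with $|F|=X_C$, the identification $F=F_C$ is automatic, and Theorem \ref{corresp} then converts $X_C$ into $\bigcap\eta(C)$.
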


\begin{ex} By comparing Figure \ref{arrgt} with Figures \ref{figurone}, \ref{salposf} and
  \ref{nbcbi} one sees immediately the claimed correspondence: $$\begin{array}{llll}
\eta(C_1)=\emptyset, & \bigcap\emptyset = \mathbb{R}^d=\hat{0}=X_{C_1}, & \mathbb{R}^d\cap C_1 = C_1, & \langle C_1, C_1\rangle \textrm{ is critical;}\\
\eta(C_2)=\{H_3\}, & \bigcap \{H_3\} = H_3 = X_{C_2}, & H_3\cap C_2=F_1, &\langle F_1, C_2 \rangle \textrm{ is critical;}\\
\eta(C_3)=\{H_2\}, & \bigcap \{H_2\} = H_2 = X_{C_3}, & H_2\cap C_3=F_6, &\langle F_6, C_3 \rangle \textrm{ is critical;}\\
\eta(C_4)=\{H_1\}, & \bigcap \{H_1\} = H_1 = X_{C_4}, & H_1\cap C_4=F_2, &\langle F_2, C_4 \rangle \textrm{ is critical;}\\
\eta(C_5)=\{H_1,H_3\}, & H_1\cap H_3 = P = X_{C_5}, & P\cap C_5 = P, & \langle P, C_5\rangle \textrm{ is critical;}  \\
\eta(C_6)=\{H_1,H_2\}, & H_1\cap H_2 = P = X_{C_6}, & P\cap C_6 = P, & \langle P, C_6\rangle \textrm{ is critical;}
\end{array}
$$
and there are no further critical cells.
\end{ex}

\begin{rem} The importance of the chambers in the above
  characterization of the critical cells is mainly to give the order
  along which we decompose the Salvetti complex. It is now natural to
  ask if such ordering can be defined purely in terms of the
  no-broken-circuit sets. This would actually allow to describe the
  situation without referring to the geometry of $\mathbb{R}^d$. However, this task might be particularly subtle:
  for instance, compare the arrangement of Coxeter type $A_2$ and the
  coordinate arrangement in $\mathbb{R}^3$ (let us call it $K_3$). Up to symmetry, in both
  cases there is only one linear ordering induced on the families of
  no-broken-circuit sets:
$$A_2:\quad\emptyset,\quad\{3\},\quad\{2\},\quad\{1\},\quad\{1,2\},\quad\{1,3\},$$$$K_3:\quad\emptyset,\quad\{3\},\quad\{2\},\quad\{2,3\},\quad\{1\},\quad\{1,3\},\quad\{1,2\},\quad\{1,2,3\}$$
(where we wrote $j$ for $H_j$) and we see that $\{1,2\}$ and $\{1,3\}$ are switched in the two
orderings. This seems to indicate that one should consider also some
`global' property of the lattice, other than just examining the
no-broken-circuit sets. %But for the moment we want to leave this as
%the object of future work.
\end{rem}

\bibliographystyle{amsplain}
\bibliography{bibmorse}

\end{document}